\newtheorem{theorem}{Theorem}[section]
\newtheorem{proposition}[theorem]{Proposition}
\newtheorem{corollary}[theorem]{Corollary}
\newtheorem{lemma}[theorem]{Lemma}
\newtheorem{conjecture}[theorem]{Conjecture}
  \theoremstyle{definition}
\newtheorem{definition}[theorem]{Definition}
  \theoremstyle{remark}
\newtheorem{remark}[theorem]{Remark}
\newcommand{\Kn}{K_n^{\operatorname{top}}}
\newcommand{\KR}{\mathbb{K}_R}
\newcommand{\dbC}{\mathbb{C}}
\newcommand{\dbE}{\mathbb{E}}
\newcommand{\dbH}{{\mathbb H}}
\newcommand{\dbK}{\mathbb{K}}
\newcommand{\dbQ}{\mathbb{Q}}
\newcommand{\dbR}{\mathbb{R}}
\newcommand{\dbT}{\mathbb{T}}
\newcommand{\dbZ}{\mathbb{Z}}
\newcommand{\calF}{{\mathcal F}}
\newcommand{\calH}{{\mathcal H}}
\newcommand{\calI}{{\mathcal I}}
\newcommand{\calM}{{\mathcal M}}
\newcommand{\calO}{{\mathcal O}}
\newcommand{\calP}{{\mathcal P}}
\newcommand{\philbert}{PSL_2(\mathcal{O}_k)}
\newcommand{\hilbert}{SL_2(\mathcal{O}_k)}
\newcommand{\vcyc}{V\text{\tiny{\textit{CYC}}}}
\newcommand{\fbc}{F\text{\tiny{\textit{BC}}}}
\newcommand{\inftyvcyc}{\vcyc_\infty }
\newcommand{\fin}{F\text{\tiny{\textit{IN}}}}
\newcommand{\all}{A\text{\tiny{\textit{LL}}}}
\newcommand{\LW}[4]{\xymatrix{ #1 \ar[r] \ar[d] &  #2 \ar[d] \\ #3 \ar[r] & #4 }}
\newcommand{\func}[3]{#1:#2\to#3}
\newcommand{\inv}[1]{ #1^{-1}}
\newcommand{\nbeq}{\begin{equation}}
\newcommand{\neeq}{\end{equation}}
\newcommand{\beq}{\begin{equation*}}
\newcommand{\eeq}{\end{equation*}}
\newcommand{\Nil}[1]{\bigoplus\limits_{i=0}^{#1}NK_{q-i}(R)^{\binom{#1}{i}}        }
\newcommand{\Nildos}[1]{\bigoplus\limits_{i=0}^{#1}NK_{q-i}(R(\dbZ/2))^{\binom{#1}{i}}        }
\newcommand{\dosNil}[1]{\bigoplus\limits_{i=0}^{#1}(NK_{q-i}(R)\oplus NK_{q-i}(R))^{\binom{#1}{i}}        }
\newcommand{\dosNildos}[1]{\bigoplus\limits_{i=0}^{#1}(NK_{q-i}(R(\dbZ/2))\oplus NK_{q-i}(R(\dbZ/2))^{\binom{#1}{i}}        }
\begin{document}

\title{The algebraic and topological K-theory of the Hilbert Modular Group}

\author{Luis Jorge S\'anchez Salda\~na}
\address{Unidad Cuernavaca del Instituto de Matem\'aticas, National University of Mexico, Mexico 62210}
\email{luisjorge@im.unam.mx}
\thanks{The first author was supported by FORDECYT postdoctoral grant and DGAPA-UNAM postdoctoral grant.}

\author{Mario Vel\'asquez}
\address{Departamento de Matem\'aticas, Pontificia Universidad Javeriana, Cra. 7 No. 43-82- Edificio Carlos Ort\'iz 5to piso, Bogot\'a D.C., Colombia}
\email{mario.velasquez@javeriana.edu.co}

\subjclass[2000]{Primary 54C40, 14E20; Secondary 46E25, 20C20}

\date{}


\keywords{K- and L- theory, Farrell-Jones conjecture, Baum-Connes conjecture, classifying spaces, equivariant homology theories}

\begin{abstract}
In this paper we provide descriptions of the Whitehead groups with coefficients in a ring of the Hilbert modular group and its reduced version, as well as for the topological K-theory of $C^*$-algebras, after tensoring with $\mathbb{Q}$, by computing the source of the assembly maps in the Farrell-Jones and the Baum-Connes conjecture respectively. We also construct a model for the classifying space of the Hilbert modular group for the family of virtually cyclic subgroups. \end{abstract}

\maketitle

\section{Introduction}

In \cite{BSS} Bustamante and S\'anchez studied the Whitehead groups of the Hilbert modular group $\hilbert$ and the reduced version $\philbert$, for any totally real extension $k$ of $\dbQ$. In that paper they obtained, for all $q\in \dbZ$, the following splitting 
\[ 
Wh_q(\philbert)\cong \bigoplus_{M\in\calF} Wh_q(M),
\]
where the sum runs over conjugacy classes of maximal finite subgroups of $\philbert$. Also they obtained the isomorphisms
\begin{align*}
Wh_1(\hilbert)&\cong \hilbert^{ab} \oplus \dbZ/2\oplus Wh_1(\philbert),\\
Wh_0(\hilbert) &\cong \dbZ \oplus Wh_0(\philbert), \text{ and }\\
Wh_{-1}(\hilbert) &\cong Wh_{-1}(\philbert).
\end{align*}
The main tool used in \cite{BSS} are the $K$-theoretic Farrell-Jones conjecture from \cite{FJ93}, the $p$-chain spectral sequence from \cite{DL03} and the action of the Hilbert modular group in the $n$-fold product of copies hyperbolic planes. It is worth noticing that they work with the Farrell-Jones conjecture with coefficients in the non-connective algebraic $K$-theory spectrum $\dbK_\dbZ$ of the integers. Nevertheless the computations carry on with coefficients in $\KR$, for any ring $R$, although they don't recover the classical Whitehead  groups, instead you get a direct summand of the Whitehead groups with coefficients (see Section 2).

In the present paper we are interested in the Whitehead groups with coefficients in an associative ring with unitary element, which are a generalization of the Whitehead groups studied in \cite{BSS}. The strategy is to use the $K$-theoretic Farrell-Jones conjecture to identify the Whitehead groups $Wh_q(G;R)$ with the homology groups of certain classifying spaces with coefficients in the nonconnective $K$-theory spectrum $\KR$. Then we use a result of  Bartels to split this homology theory into two parts. The first summand is the one studied in \cite{BSS}, while for the study of the second part we follow the strategy used in \cite{LR14}, which makes use of the inductive structure of our equivariant homology theory, the L\"uck-Weiermann construction from \cite{LW12}, and the Mayer-Vietoris sequence associated to it. The descriptions of the Whitehead groups of the Hilbert modular group and the reduced one are in Theorem \ref{whhilbert} and Theorem \ref{whphilbert}. The later it is a partial generalization of Theorem 3.35 in \cite{DKR11}.

On the other hand we also obtain a computation of the rational topological K-theory groups of the reduced group C$^\ast$-algebra of $\philbert$. We use the Baum-Connes conjecture to identify the topological K-theory groups with the equivariant K-homology groups of the classifying space for proper actions. Then we use the p-chain spectral sequence from \cite{DL03} and some results proved in \cite{BSS} that implies that $\philbert$ satisfies conditions (M) and (NM) defined in \cite{DL03}, finally using some computations of the rational group cohomology of $\philbert$ from \cite{Fr90} we obtain a complete calculation of the topological K-theory groups of the reduced C$^\ast$-algebra of the reduced Hilbert modular group in Theorem \ref{topk}.

This paper is organized as follows. In Section 2 we recall the definition of a classifying  space $E_{\calF}G$ of a group $G$ and a family of subgroups $\calF$, also we describe de L\"uck-Weiermann construction, and we construct an explicit model for the classifying space $E_{\fbc} V$ of a non-orientable virtually cyclic subgroups $V$ and the family of finite-by-cyclic subgroups. In Section 3 we recall the $K$-theoretic Farrell-Jones conjecture and the Baum-Connes conjecture, we introduce the Whitehead groups via a theorem of Waldhausen, and we recall Bartels splitting theorem of the domain of the assembly map in the Farrell-Jones conjecture. Section 4 is devoted to introduce the Hilbert modular group and what we call the \textit{ reduced} Hilbert group as well as some of their basic properties, we end this section by constructing models for the classifying space for both groups in  and the family of virtually cyclic subgroups in Theorem \ref{classifyinghilbert} and Theorem \ref{classifyinghilbert2}. Next, in Section 5 we perform the computation of the Whitehead groups of the Hilbert modular group and the reduced Hilbert modular group and give expressions in terms of the Whitehead groups of finite groups and the Nil-groups of the coefficient ring. Finally, in Section 6 we compute the rational topological $K$-theory of the reduced  $C^*$-algebra of the reduced modular Hilbert group using the Chern Character and some results in \cite{DL03}.


\section{Classifying spaces for families of subgroups}

In this section we recall the notion of classifying spaces for families of subgroups, the construction of L\"uck-Weiermann from \cite{LW12}, and the construction of a model for $E_{\fbc}V$, for a non-orientable virtually cyclic group $V$ and the family of finite-by-cyclic subgroups.\\

Let $G$ be a discrete group. A family of subgroups $\calF$ of a group $G$ is always assumed to be closed under conjugation and under taking subgroups. A model for the classifying space $E_\mathcal{F} G$  is a $G$-CW-complex $X$ satisfying that all of its isotropy groups belong to $\mathcal{F}$ and the fixed point set $X^H$ is contractible for every $H$ in $\calF$.  Equivalently, a model for $E_\mathcal{F} G$ is a terminal object in the category whose objects are $G$-CW-complexes with stabilizers in $\mathcal{F}$ (often called $\calF$-$G$-CW-complexes) and whose morphisms are $G$-homotopy classes of $G$-maps. It is well known that given $G$ and $\mathcal{F}$, a model for $E_\mathcal{F} G$ always exists and it is unique up to $G$-homotopy equivalence (see \cite{Lu05}). 

 We are specially interested in the
following families of subgroups:
\begin{itemize}
	\item $\all$ of all subgroups of $G$;
	\item $\vcyc$ of all virtually cyclic subgroups of $G$, i.e. subgroups which have a (possibly finite) cyclic subgroup of finite index;
    \item $\fbc$ of all subgroups that are either finite or isomorphic to $F\rtimes \dbZ$, with $F$ a finite group. Here the notation $\fbc$ stands for finite-by-cylic.
	\item $\fin$ of all finite subgroups;
    \item $Sub(K)$ the family of subgroups of $G$ generated by a subgroup $K\leq G$.
	\item $Tr$ consisting of the trivial subgroup.
\end{itemize}

The family $\fin$ is interesting because appears in the Baum-Connes conjecture. While the family $\vcyc$ appears in the original statement of the Farrell-Jones conjecture. In \cite{DKR11} and \cite{DQR11} it is proven that $\vcyc$ might be replaced by $\fbc$ in the Farrell-Jones assembly map, this is why this family it is also considered in this work.

We denote by $EG$, $\underline{E}G$, $\underline{\underline{E}}G$, and $E_KG$ the classifying spaces $E_{Tr}G$, $E_{\fin}G$, $E_{\vcyc}G$, $E_{Sub(K)}G$ respectively.

\begin{definition}
Let $G$ be a group and let $\inftyvcyc$ be the set of infinite virtually cyclic subgroups of $G$. 

\begin{itemize}
    \item Define an equivalence relation $\sim$ in $\inftyvcyc$ as follows: if $H,K\in\inftyvcyc$ we say that $H\sim K$ if $H\cap K$ is infinite. We denote by $[H]$ the equivalence class of $H$, and by $[\inftyvcyc]$ the quotient set.
    
    \item Define $N_G[H]:=\{g\in G| [g^{-1}Hg]=[H]\}$. We call this subgroup of $G$ the normalizer of the class $[H]$, or the commensurator of $H$. Note that $N_G[H]$ does not depend on the representant of the class $[H]$, in particular we can choose $H$ to be an infinite cyclic subgroup of $G$.
    
    \item Define a family of subgroups of $N_G[H]$ by $$\vcyc[H]:=\{V\subset N_G[H]| V\in \vcyc_\infty,V\in[H]\}\cup (\fin \cap N_G[H])$$
    where $\fin \cap N_G[H]$ consists of the family of finite subgroups of $N_G[H]$.
\end{itemize}
\end{definition}


\begin{theorem}\label{luckweiermannthm} \cite[Theorem 2.3]{LW12}
Let $G$ be a discrete group. Let $I$ be a complete set of representatives of the $G$-orbits in $[\inftyvcyc]$ under the $G$-action coming from conjugation. For every $H\in I$, choose models for $\underline{E}N_G[H]$ and $E_{\vcyc[H]}N_G[H]$, and a model for $\underline{E}G$. Now consider $X$ defined by the $G$-pushout:
	 $$ \xymatrix{ \coprod_{H\in I} G\times_{N_G[H]}\underline{E}N_G[H] \ar[r]^-i \ar[d]^{\coprod_{H\in I}Id_G\times_{N_G[H]}f_{H}} & \underline{E}G \ar[d] \\ \coprod_{H\in I}G\times_{N_G[H]} E_{\vcyc[H]}N_G[H] \ar[r] & X}$$
	 where the maps starting from the left upper corner are cellular and one of them is an inclusion of $G$-CW-complexes. Then $X$ is a model for $\underline{\underline{E}}G$.
\end{theorem}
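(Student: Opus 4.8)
The plan is to verify $X$ against the fixed-point characterization of classifying spaces: a $G$-CW-complex with all isotropy in $\vcyc$ is a model for $\underline{\underline{E}}G=E_{\vcyc}G$ if and only if the fixed-point set $X^K$ is contractible for every $K\in\vcyc$ and empty for every $K\notin\vcyc$. Since one of the two maps out of the upper-left corner is an inclusion of $G$-CW-complexes, hence a $G$-cofibration, the functor $(-)^K$ commutes with the $G$-pushout, so for each subgroup $K\le G$ the space $X^K$ is itself the pushout of the $K$-fixed points of the three corners. The whole computation thus reduces to understanding these fixed points, for which I would use the standard formula $(G\times_N Y)^K=\coprod Y^{g^{-1}Kg}$, the disjoint union running over the cosets $gN\in G/N$ with $g^{-1}Kg\le N$.

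First I would record that every isotropy group of $X$ lies in $\vcyc$: those coming from $\underline{E}G$ are finite, and those coming from $G\times_{N_G[H]}E_{\vcyc[H]}N_G[H]$ are conjugates of members of $\vcyc[H]$, which are either finite or infinite virtually cyclic. In particular $X$ is an $\vcyc$-$G$-CW-complex, and for $K\notin\vcyc$ the set $X^K$ is automatically empty. For the finite case $K\in\fin$ one has $(\underline{E}G)^K\simeq *$, and since every conjugate $g^{-1}Kg$ is finite and $\fin\cap N_G[H]\subseteq\vcyc[H]$, the fixed-point sets $(\underline{E}N_G[H])^{g^{-1}Kg}$ and $(E_{\vcyc[H]}N_G[H])^{g^{-1}Kg}$ are all contractible, indexed by the same set of cosets. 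Hence on $K$-fixed points the left-hand vertical map is a homotopy equivalence of disjoint unions of contractible spaces; as one leg of the pushout is a cofibration, $X^K$ computes the homotopy pushout and is therefore homotopy equivalent to $(\underline{E}G)^K\simeq *$.

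The heart of the argument, and the step I expect to be the main obstacle, is the infinite virtually cyclic case $K\in\inftyvcyc$. Here $(\underline{E}G)^K=\emptyset$, and because every $g^{-1}Kg$ is infinite while $\underline{E}N_G[H]$ has contractible fixed points only for finite subgroups, the entire top row of the pushout has empty $K$-fixed points. Thus $X^K$ collapses to $\coprod_{H\in I}(G\times_{N_G[H]}E_{\vcyc[H]}N_G[H])^K=\coprod (E_{\vcyc[H]}N_G[H])^{g^{-1}Kg}$, a component being contractible exactly when $g^{-1}Kg\in\vcyc[H]$, that is, when $[g^{-1}Kg]=[H]$. The delicate bookkeeping is to show that precisely one summand survives: since $I$ represents the $G$-orbits of $[\inftyvcyc]$, there is a unique $H\in I$ together with some $g_0$ satisfying $[g_0^{-1}Kg_0]=[H]$; the condition $[g^{-1}Kg]=[H]$ then forces $g_0^{-1}g\in N_G[H]$, so the surviving coset $gN_G[H]=g_0N_G[H]$ is unique as well. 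One must also check that $L:=g_0^{-1}Kg_0$, being infinite virtually cyclic and commensurable with $H$, indeed lies in $N_G[H]$ and hence in $\vcyc[H]$; this is exactly where the definition of the commensurator $N_G[H]$ and the fact that commensurability is an equivalence relation on $\inftyvcyc$ enter. Granting this, $X^K=(E_{\vcyc[H]}N_G[H])^{L}\simeq *$, which completes the verification in all cases and identifies $X$ with $\underline{\underline{E}}G$.
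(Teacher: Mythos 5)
The paper offers no proof of this statement for you to be compared against: it is quoted, with attribution, as Theorem 2.3 of \cite{LW12}, so the only meaningful comparison is with Lück--Weiermann's original argument. Your proof is correct, and it is in essence exactly their proof: they establish a more general version (for any nested pair of families $\mathcal{F}\subseteq\mathcal{G}$ equipped with a conjugation-compatible equivalence relation on $\mathcal{G}\setminus\mathcal{F}$) by the same fixed-point analysis, of which the theorem quoted here is the specialization to $\fin\subseteq\vcyc$ with the commensurability relation. All the load-bearing steps in your write-up are sound: fixed points commute with a $G$-pushout one of whose legs is a $G$-CW inclusion, so $X^K$ is computed by the (homotopy) pushout of the fixed-point diagram; the formula $(G\times_N Y)^K\cong\coprod Y^{g^{-1}Kg}$, the union running over cosets $gN$ with $g^{-1}Kg\leq N$; in the finite case the left vertical map induces a bijection on components with contractible components on both sides, so $X^K\simeq(\underline{E}G)^K\simeq *$; and for $K\in\inftyvcyc$ the top row has empty fixed points, so everything reduces to the surviving-component count in the bottom-left corner. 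The two points you flag as delicate are indeed the crux, and your treatment of them is right: because an infinite subgroup of a virtually cyclic group has finite index, the relation $\sim$ is transitive on $\inftyvcyc$, and this transitivity is what shows both that $L=g_0^{-1}Kg_0$ lies in $N_G[H]$ (hence $L\in\vcyc[H]$, giving contractibility of the surviving component) and that any two cosets $gN_G[H]$ with $[g^{-1}Kg]=[H]$ coincide, giving uniqueness. The only cosmetic remark is that this criterion a priori yields weak contractibility of $X^K$, which suffices since $X^K$ is a CW complex.
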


Now we are going to analyze classifying spaces for virtually cyclic groups and the family $\fbc$ of finite-by-cyclic subgroups. It is well known that every virtually cyclic group $V$ is of one of the following types:
\begin{enumerate}
\item Finite;
\item orientable or finite-by-cyclic, i.e. it surjects onto $\dbZ$ with finite kernel, so that it is isomorphic to $F\rtimes \dbZ$ with $F$ a finite group; or
\item non-orientable, i.e. it surjects onto the infinite dihedral group $D_\infty$ with finite kernel, so that is isomorphic to an amalgam of finite groups $F_1*_{F_3} F_2$ with $[F_1:F_3]=[F_2:F_3]=2$.
\end{enumerate}

Let $V$ be a non-orientable virtually cyclic group, then we have the short exact sequence
\[
1\to F \to  V \xrightarrow{p_V} D_\infty \to 1,
\]
with $F$ a finite group. On the other hand, since $D_\infty$ is isomorphic to $\dbZ \rtimes \dbZ/2$, we have the short exact sequence
\[
1\to \dbZ \to D_\infty \xrightarrow{p_D} \dbZ/2 \to 1.
\]\\

The following theorem will be used in the proof of Lemma \ref{lemmaLR1}.

\begin{theorem}\label{EfbcV}
Let $V$ be a non-orientable virtually cyclic group and denote $P_V=p_D\circ p_V$. Let $X$ be the $V$-CW-complex defined by the $V$-pushout
\[
\LW{E_FV}{\underline{E}V}{P_V^*E\dbZ/2}{X,}
\]
where the upper arrow is an inclusion map, and $P_V^*$ denotes the action of $D_\infty$ induced by the map $P_V$. Then $X$ is a model for $E_{\fbc}V$.

\end{theorem}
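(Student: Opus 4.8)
The plan is to verify directly that $X$ satisfies the two defining properties of $E_{\fbc}V$: that every isotropy group lies in $\fbc$, and that $X^H$ is contractible for each $H\in\fbc$. The starting point is a purely group-theoretic observation translating membership in $\fbc$ into a condition on $P_V$. Since $V$ is virtually cyclic, every subgroup $W\le V$ is virtually cyclic and $p_V(W)\le D_\infty$ has finite part coming from $W\cap F$. I would first show that $W\in\fbc$ if and only if $W$ is finite or $P_V(W)=\{1\}$; equivalently, $W\notin\fbc$ exactly when $W$ is infinite with $P_V(W)=\dbZ/2$. The point is that $\ker p_D$ is the infinite cyclic translation subgroup of $D_\infty$, so $P_V(W)=\{1\}$ forces $p_V(W)$ to be infinite cyclic or trivial (hence $W$ orientable, i.e. of the form $F'\rtimes\dbZ$), while $P_V(W)=\dbZ/2$ with $W$ infinite forces $p_V(W)$ to be infinite dihedral and $W$ non-orientable. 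I would also record the refinement that for a \emph{finite} subgroup $H$, the condition $P_V(H)=\{1\}$ already forces $H\subseteq F$, because a finite subgroup of $\dbZ=\ker p_D$ is trivial.

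Next I would assemble the fixed-point data of the three corners. For $H\le V$ one has $(E_FV)^H\simeq\ast$ precisely when $H\in Sub(F)$, i.e. $H\subseteq F$, and is empty otherwise; $(\underline{E}V)^H\simeq\ast$ precisely when $H$ is finite, empty otherwise; and, since $E\dbZ/2$ is a contractible free $\dbZ/2$-space, $(P_V^*E\dbZ/2)^H\simeq\ast$ when $P_V(H)=\{1\}$ and $(P_V^*E\dbZ/2)^H=\emptyset$ when $P_V(H)=\dbZ/2$. I should note that the $V$-map $f\colon E_FV\to P_V^*E\dbZ/2$ exists and is unique up to $V$-homotopy by the equivariant Whitehead theorem, since $(P_V^*E\dbZ/2)^K$ is contractible for every $K\in Sub(F)$ (as $K\subseteq F\subseteq\ker P_V$), and that $X$ is a $V$-CW-complex by construction.

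The core of the argument is then a case analysis using that taking $H$-fixed points commutes with the $V$-pushout, because the top horizontal map is an inclusion of $V$-CW-complexes; hence $X^H$ is the (homotopy) pushout of $(\underline{E}V)^H\leftarrow(E_FV)^H\to(P_V^*E\dbZ/2)^H$. If $H$ is finite with $P_V(H)=\{1\}$, then $H\subseteq F$ and all three corners are contractible, so $X^H\simeq\ast$. If $H$ is finite with $P_V(H)=\dbZ/2$, the diagram is $\ast\leftarrow\emptyset\to\emptyset$, giving $X^H\simeq\ast$. If $H$ is infinite with $P_V(H)=\{1\}$, the diagram is $\emptyset\leftarrow\emptyset\to\ast$, so again $X^H\simeq\ast$. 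Finally, if $H$ is infinite with $P_V(H)=\dbZ/2$ (exactly the subgroups outside $\fbc$), all three corners are empty and $X^H=\emptyset$. Combined with the characterization from the first step, this shows $X^H$ is contractible precisely for $H\in\fbc$ and empty otherwise, which is exactly the statement that $X$ is a model for $E_{\fbc}V$.

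The step I expect to require the most care is the fixed-point bookkeeping for $P_V^*E\dbZ/2$ together with the observation that finiteness of $H$ plus $P_V(H)=\{1\}$ collapses to $H\subseteq F$; this is what makes the two finite cases behave consistently and prevents spurious non-contractible fixed sets. A secondary technical point to justify is that $(-)^H$ turns the $V$-pushout into a genuine homotopy pushout of spaces, which relies on one leg being a cofibration; granting this, the homotopy types of pushouts of empty or contractible spaces are immediate.
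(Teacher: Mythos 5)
Your proof is correct and takes essentially the same approach as the paper: the paper's (very terse) proof consists exactly of observing that taking $K$-fixed points of the defining $V$-pushout again yields a pushout, and then leaving the resulting verification as ``not difficult to see.'' You have simply supplied the details the paper omits --- the characterization of membership in $\fbc$ via $P_V$, the refinement that finite $H$ with $P_V(H)=\{1\}$ lies in $F$, and the corner-by-corner fixed-point analysis --- all of which are sound.
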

\begin{proof}
Since for every $K\leq V$ we have the pushout
\[
\LW{E_FV^K}{\underline{E}V^K}{P_V^*E\dbZ/2^K}{X^K,}
\]
it is not difficult to see that $X$ is a model for $E_{\fbc} V$.
\end{proof}


\section{The Farrell-Jones conjecture and the Baum-Connes conjecture}

In this section we recall the Farrell-Jones conjecture and the Baum-Connes conjecture. Also we recall Bartel's splitting theorem and we introduce the Whitehead groups $Wh_q(G;R)$ using a famous theorem of Waldhausen.\\

Let $G$ be a discrete group and let $R$ be an associative ring with unitary element. We denote by $K_n(R(G))$, $n\in \dbZ$, the algebraic $K$-theory groups of the group ring $R(G)$ in the sense of Quillen for $n\geq0$ and in the sense of Bass for $n\leq -1$. Let $NK_n(R)$ denote the Bass Nil-groups of $R$, which by definition are the cokernel of the map in algebraic $K$-theory $K_n(R)\to K_n(R[t])$ induced by the canonical inclusion $R\to R(G)$. From the Bass-Heller-Swan theorem we get, for all $n\in\dbZ$, the decomposition $$K_n(R(\dbZ))\cong K_n(R[t,t^{-1}])\cong K_n(R)\oplus K_{n-1}(R) \oplus NK_n(R)\oplus NK_n(R).$$



Throughout this work we consider equivariant homology theories in the sense of \cite[Section 2.7.1]{LR05}. In particular, we are interested in the equivariant homology theory with coefficients in the $K$-theory spectrum described in \cite[Section 2.7.3]{LR05}, denoted by $H^G_*(-;\KR)$. For a fixed group $G$ this homology theory satisfies the Eilenberg-Steenrod axioms in the $G$-equivariant setting. One of the main properties of this homology theory is that 
\[  H_n^G(G/H;\KR) \cong H_n^H(H/H;\KR) \cong K_n(R(H))
\]
for every $H\subseteq G$. The other property we are interested in is the so-called \textit{induction structure} (see \cite[Section 2.7.1]{LR05}): given a group homomorphism $\alpha : H\to G$ and a $H$- CW- pair $(X,A)$ such that the kernel of $\alpha$ acts freely on $X$, there are, for every $n\in \dbZ$, natural isomorphisms $$ind_{\alpha} : H^H_n(X,A; \dbE )\to H^G_n(ind_{\alpha}(X,A);\KR).$$

This equivariant homology theory is relevant since it appears in the statement of the Farrell-Jones conjecture.\\

In their seminal paper \cite{FJ93} Farrell and Jones established their
famous isomorphism conjecture for the $K$-theory, 
$L$-theory and Pseudoisotopy functors. Here we consider the $K$-theoretic version of the conjecture as stated by Davis and L\"uck in \cite{DL98}.

\begin{conjecture}[The Farrell-Jones isomorphism conjecture] Let $G$ be group and let $R$ be a ring. Then, for any $n\in \dbZ$, the following assembly map, induced by the projection $\underline{\underline{E}}G\to G/G$, is an isomorphism

\nbeq\label{FJ}\tag{$\ast$}
\func{A_{\vcyc,\all}}{H^{G}_n(\underline{\underline{E}}G;\KR)}{H^{G}_n(G/G;\KR)\cong K_n(R(G))}.
\neeq

\end{conjecture}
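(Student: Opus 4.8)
\emph{A remark on the nature of the statement.} As worded this is the Farrell--Jones conjecture for a completely arbitrary discrete group $G$ and arbitrary ring $R$, and in that generality it is open: no proof is known or expected. What the present paper actually requires is only that the assembly map $A_{\vcyc,\all}$ be an isomorphism for the Hilbert modular group $\hilbert$ and its reduced quotient $\philbert$, so this is where I would concentrate. The plan is to realize both groups as lattices in a semisimple Lie group and then invoke the geometric verification of the conjecture available for such lattices, thereby justifying its use as a computational tool in the later sections.

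\emph{Reduction to a geometric criterion.} First I would recall the inheritance and transitivity properties of the conjecture (Davis--L\"uck, Bartels--Echterhoff--L\"uck): the class of groups satisfying the $\KR$-version is closed under passage to subgroups and finite-index overgroups, and the conjecture relative to $\vcyc$ may be assembled from its validity on the members of $\vcyc$ together with suitable colimits. This localizes the problem to a single geometric input, for which I would use the \emph{transfer reducibility} condition of Bartels--L\"uck: it suffices to exhibit, for a proper action of $G$ on a finite-dimensional contractible space, a sequence of equivariant open covers that are simultaneously of bounded covering dimension and arbitrarily ``long and thin'' along a geodesic flow. Once such covers are produced over the associated flow space, the abstract machinery upgrades them to the assertion that $A_{\vcyc,\all}$ is an isomorphism for every $n\in\dbZ$ and every coefficient ring $R$.

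\emph{The geometry and the main obstacle.} For the groups at hand the relevant space is explicit: with $d=[k:\dbQ]$, the group $\hilbert$ is an arithmetic lattice in $SL_2(\dbR)^{d}$, and $\philbert$ is a lattice in $PSL_2(\dbR)^{d}$, each acting properly by isometries on the CAT(0) product $\dbH^2\times\cdots\times\dbH^2$ of $d$ copies of the hyperbolic plane. For a \emph{cocompact} lattice the long-and-thin covers are supplied directly by the Bartels--L\"uck treatment of CAT(0)-groups, so the main obstacle is precisely that these lattices are \emph{not} cocompact: the quotient has finite volume but carries cusps, and one must control the parabolic (unipotent) subgroups living at the ends so that the cuspidal directions do not destroy either the dimension bound or the flow estimate. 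I would resolve this exactly as in the work of Bartels--Farrell--L\"uck and its extension by Kammeyer--L\"uck--R\"uping on arbitrary lattices in virtually connected Lie groups, where the flow-space covers are constructed using the Borel--Serre compactification so that the cusps are absorbed into the flow. Granting that input, both $\hilbert$ and $\philbert$ satisfy the conjecture, which is all that the computations in the remaining sections require.
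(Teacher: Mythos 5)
Your reading is correct: the statement is a conjecture, not a theorem, and the paper never attempts to prove it in general --- it only invokes the main result of Kammeyer--L\"uck--R\"uping \cite{KLR14} (in the proofs of Theorems \ref{whphilbert} and \ref{whhilbert}) to conclude that $\hilbert$ and $\philbert$, being lattices in $SL_2(\dbR)^n$ and $PSL_2(\dbR)^n$, satisfy it for every coefficient ring $R$. Your route through transfer reducibility, flow-space covers, and the treatment of cusps for non-cocompact lattices is exactly the content of that cited result, so your proposal takes essentially the same approach as the paper, merely spelling out the geometric machinery behind the citation.
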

Once the Farrell-Jones conjecture has been verified for a group $G$, one can
hope to compute $K_n(R(G))$ by computing the left hand side of \eqref{FJ}.
The later is a generalized homology theory that can be approached, for 
example, via Mayer-Vietoris sequences, Atiyah-Hirzebruch-type spectral
sequences or the $p$-chain spectral sequence described in \cite{DL03}.

In order to handle the left hand side of \eqref{FJ} is desirable to have good models for the classifying space $\underline{\underline{E}}G$. For this, we will use the construction of L\"uck and Weiermann described in the previous section. Roughly speaking, this construction, gives us an algorithm to construct a model for $\underline{\underline{E}}G$ using a model for $\underline{E}G$ and attaching some classifying spaces, respect to smaller families, of subgroups of $G$.\\

Given a cellular $G$-map $f:X\to Y$ between the $G$-CW-complexes $X$ and $Y$, we define $$H_n^G(f:X\to Y;\KR):=H_n^G(M_f, X;\KR),$$ where $M_f$ is the mapping cylinder of $f$ with the $G$-CW-structure induced by $X$ and $Y$, and $X$ is identified with the image of the canonical inclusion $X\to M_f$. Using the fact that $Y$ and $M_f$ are $G$-homotopy equivalent, we have the long exact sequence 
\begin{equation}\label{fexactseq}
\cdots \to H_n^G(X;\KR) \xrightarrow{f_*} H_n^G(Y;\KR) \to H_n^G(f:X\to Y;\KR)\to H_{n-1}^G(X;\KR) \to \cdots.
\end{equation}

If $g:X\to Y$ is a $G$-cellular map $G$-homotopic to $f$, then using a five lemma argument it is easy to see that $H_n^G(f:X\to Y;\KR)\cong H_n^G(g:X\to Y;\KR)$. In this context, the Farrell-Jones conjecture may be rephrased by claiming that $H^G_n(\underline{\underline{E}}G\to G/G;\KR)$ vanishes for every group $G$ and every $n\in \dbZ$.\\

The following theorem will be useful in our computations of algebraic $K$-theory.

\begin{theorem}\label{barthels}
 Let $G$ be a group and let $R$ be a ring. Then, for any $n\in \dbZ$, the assembly map induced by the (unique up to homotopy) $G$-map $\underline{E}G \to \underline{\underline{E}}G$
 $$H^G_n(\underline{E}G;\dbK_R)\to H^G_n(\underline{\underline{E}}G;\dbK_R)$$
 is split-injective, so that 
 \begin{equation}
 H^G_n(\underline{\underline{E}}G;\dbK_R)\cong H^G_n(\underline{E}G;\dbK_R) \oplus H^G_n(\underline{E}G\to \underline{\underline{E}}G;\KR).
 \end{equation}
\end{theorem}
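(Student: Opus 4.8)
The plan is to recognize this as Bartels' splitting theorem for the domain of the Farrell--Jones assembly map, and to organize the argument around the relative assembly map induced by the inclusion of families $\fin \subseteq \vcyc$. Writing $f\colon \underline{E}G\to \underline{\underline{E}}G$ for the (essentially unique) classifying map, split injectivity of $f_*$ amounts exactly to producing a natural retraction $r\colon H^G_n(\underline{\underline{E}}G;\KR)\to H^G_n(\underline{E}G;\KR)$ with $r\circ f_*=\operatorname{id}$; any such $r$ forces the connecting homomorphism of \eqref{fexactseq} to vanish and yields the displayed decomposition $H^G_n(\underline{\underline{E}}G;\KR)\cong H^G_n(\underline{E}G;\KR)\oplus H^G_n(f;\KR)$. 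So the entire task is to construct this retraction (equivalently, a section of the relative term) and to show it is defined for every group $G$ simultaneously.

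First I would reduce the global statement to a statement about commensurators of infinite cyclic subgroups. Applying the L\"uck--Weiermann construction (Theorem \ref{luckweiermannthm}) presents $\underline{\underline{E}}G$ as a $G$-pushout built from $\underline{E}G$ and the induced complexes $G\times_{N_G[H]}E_{\vcyc[H]}N_G[H]$, $H\in I$. Excision for the equivariant homology theory, combined with the induction structure, then identifies the relative term as
\[
H^G_n(\underline{E}G\to \underline{\underline{E}}G;\KR)\;\cong\;\bigoplus_{H\in I} H^{N_G[H]}_n\bigl(\underline{E}N_G[H]\to E_{\vcyc[H]}N_G[H];\KR\bigr),
\]
so it suffices to split each relative assembly map at the level of the normalizer $N_G[H]$ with respect to its family $\vcyc[H]$. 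A further reduction via the inductive/transitivity structure of the homology theory localizes the problem to the behaviour of $\KR$ on the infinite virtually cyclic subgroups $V$ lying in the class $[H]$.

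On a single infinite virtually cyclic group $V$ the required splitting is purely algebraic. If $V\cong F\rtimes\dbZ$ is orientable, the Bass--Heller--Swan decomposition recalled above expresses $K_n(R(V))$ as the sum of the assembled part $H^V_n(\underline{E}V;\KR)$ and Nil-type summands; in the non-orientable case the analogous Waldhausen decomposition for the amalgam $F_1*_{F_3}F_2$ plays the same role. In both cases the assembled summand $H^V_n(\underline{E}V;\KR)$ is canonically a direct summand of $K_n(R(V))=H^V_n(V/V;\KR)$, and this provides the local retraction.

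The main obstacle is precisely the globalization. One must upgrade these pointwise, per-subgroup splittings to a single \emph{natural} retraction, that is, to a map of the underlying spectra over the orbit category that is compatible with the $G$-action and with conjugation among the classes $[H]$. This compatibility is not formal, because the Nil-summands do not assemble functorially in any obvious way; it is exactly the technical heart of Bartels' argument, which realizes the assembly map through a controlled-algebra model and builds the section by a squeezing/swindle construction. I would therefore carry out the two reduction steps above using only excision, the induction structure, and the Bass--Heller--Swan (and Waldhausen) splittings, and invoke Bartels' controlled construction for the final naturality step, which is the genuinely hard part.
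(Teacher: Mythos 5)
Your proposal is correct and, at its logical core, coincides with the paper's proof: the paper simply cites the main result of \cite{Ba03} for split injectivity of the relative assembly map and then reads off the direct-sum decomposition from the long exact sequence \eqref{fexactseq}, exactly as you do. The L\"uck--Weiermann reduction and the local Bass--Heller--Swan/Waldhausen splittings you sketch are superfluous here --- since you (rightly) concede that the globalization of those pointwise splittings is precisely the content of Bartels' controlled-algebra argument and invoke it as a black box anyway, all of that intermediate scaffolding can be deleted without loss.
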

\begin{proof}
The first assertion is the main result of \cite{Ba03}. In order to proof the splitting we use the long exact sequence (\ref{fexactseq}) to get, for any $n\in \dbZ$, the split short  exact sequence:
$$0\to H_n^G(\underline{E}G;\KR)\to H_n^G(\underline{\underline{E}}G;\KR)\to H_n^G(\underline{E}G\to \underline{\underline{E}}G;\KR)\to 0,$$ now the result follows.
\end{proof}

The Whitehead groups $Wh_n(G;R)$ of $G$ with coefficients in the ring $R$, appear in this context as follows.
\begin{proposition}\label{waldhausen}
\cite[Prop. 15.7]{Wa78} Let $G$ be a group. Then 
$Wh_n(G;R)\cong H_n^G(EG\to pt;\KR)$ for all $n\in\dbZ$.
In fact they fit in a long exact sequence

$$\cdots\to H^G_n(EG;\KR) \to K_n(R(G))\to Wh_n(G;R) \to
H_{n-1}^G(EG;\KR)\to \cdots.$$
\end{proposition}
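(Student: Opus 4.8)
The plan is to split the statement into its two assertions and to observe that, once the isomorphism $Wh_n(G;R)\cong H^G_n(EG\to pt;\KR)$ is established, the long exact sequence is purely formal. For the isomorphism I would first unwind the source of the map $f\colon EG\to G/G$. Since $EG=E_{Tr}G$ is a free contractible $G$-CW-complex, the $G$-equivariant Eilenberg--Steenrod axioms together with the computation $H^G_n(G/e;\KR)\cong K_n(R)$ identify $H^G_n(EG;\KR)$ with the Borel-type homology $H_n(BG;\mathbf{K}(R))$ of $BG=EG/G$ with coefficients in the nonconnective $K$-theory spectrum of $R$. Under $f$ the induced map $f_*\colon H^G_n(EG;\KR)\to H^G_n(G/G;\KR)\cong K_n(R(G))$ is then the classical assembly map. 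Following Waldhausen, $Wh_n(G;R)$ is the $n$-th homotopy group of the homotopy cofiber of this assembly map; as $H^G_n(EG\to pt;\KR):=H^G_n(M_f,EG;\KR)$ is by construction computed by exactly this cofiber, the two groups agree. This identification is \cite[Prop. 15.7]{Wa78} and is the only nonformal input.

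With the isomorphism granted, the long exact sequence follows immediately from the general sequence (\ref{fexactseq}) applied to the cellular $G$-map $f\colon EG\to G/G$, namely
\[
\cdots\to H^G_n(EG;\KR)\xrightarrow{f_*}H^G_n(G/G;\KR)\to H^G_n(EG\to G/G;\KR)\to H^G_{n-1}(EG;\KR)\to\cdots,
\]
after substituting $H^G_n(G/G;\KR)\cong K_n(R(G))$ and $H^G_n(EG\to G/G;\KR)\cong Wh_n(G;R)$.

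The main obstacle is located entirely in the first part: one must match Waldhausen's construction of the Whitehead spectrum, together with the assembly map over which it sits, with the Davis--L\"uck assembly map $H^G(EG;\KR)\to H^G(G/G;\KR)$ coming from the equivariant homology theory of \cite{LR05}. This is a comparison of two a priori different models of the assembly map, and it is exactly this comparison that Waldhausen's theorem provides; once it is granted, the remainder of the argument is routine homological algebra.
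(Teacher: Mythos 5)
Your proposal is correct and takes essentially the same route as the paper: the paper offers no argument for this proposition beyond the citation to Waldhausen, which is exactly where you locate the sole nonformal input (the identification of Waldhausen's Whitehead groups with the cofiber of the assembly map in the Davis--L\"uck framework). Your derivation of the long exact sequence is just the paper's sequence (\ref{fexactseq}) applied to $f\colon EG\to G/G$ together with $H^G_n(G/G;\KR)\cong K_n(R(G))$, which is precisely the formal step the paper leaves implicit.
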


\begin{lemma}\label{splitting}

Let $G$ be a group and let $R$ be a ring. Suppose that $G$ satisfies the Farrell-Jones conjecture. Then, for all $n\in \dbZ$, we have the following isomorphisms
\begin{align*}
  K_n(R(G))\cong H^G_n(\underline{\underline{E}}G;\dbK_R) &\cong H_n^G(\underline{E}G;\dbK_R)\oplus H_n^G(\underline{E}G\to \underline{\underline{E}}G;\dbK_R), \\
  Wh_n(G;R):=H_n^G(EG\to \underline{\underline{E}}G;\KR)  &\cong H_n^G(EG \to \underline{E}G;\KR)\oplus H_n^G(\underline{E}G \to \underline{\underline{E}}G;\KR).
\end{align*}

\end{lemma}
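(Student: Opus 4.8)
The plan is to deduce both displayed isomorphisms from the Farrell--Jones hypothesis together with Bartels' splitting (Theorem \ref{barthels}) and the long exact sequence of a triple. The first line requires almost no work: the Farrell--Jones conjecture for $G$ asserts that the assembly map $A_{\vcyc,\all}$ is an isomorphism, which via the identification $H_n^G(G/G;\KR)\cong K_n(R(G))$ recorded earlier gives $K_n(R(G))\cong H_n^G(\underline{\underline{E}}G;\KR)$, and Theorem \ref{barthels} then supplies the splitting $H_n^G(\underline{\underline{E}}G;\KR)\cong H_n^G(\underline{E}G;\KR)\oplus H_n^G(\underline{E}G\to\underline{\underline{E}}G;\KR)$. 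So all the content is in the second line, describing $Wh_n(G;R):=H_n^G(EG\to\underline{\underline{E}}G;\KR)$.

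For the Whitehead groups I would first choose models so that $EG\subseteq\underline{E}G\subseteq\underline{\underline{E}}G$ are nested $G$-CW-subcomplexes; this is legitimate since the structure maps are unique up to $G$-homotopy and the relative groups are $G$-homotopy invariant. The three relative terms then become the homologies of the pairs $(\underline{E}G,EG)$, $(\underline{\underline{E}}G,EG)$ and $(\underline{\underline{E}}G,\underline{E}G)$, and the long exact sequence of the triple $(\underline{\underline{E}}G,\underline{E}G,EG)$ reads
\[
\cdots \to H_n^G(EG\to\underline{E}G;\KR) \to H_n^G(EG\to\underline{\underline{E}}G;\KR) \to H_n^G(\underline{E}G\to\underline{\underline{E}}G;\KR) \xrightarrow{\partial} H_{n-1}^G(EG\to\underline{E}G;\KR) \to \cdots .
\]

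Next I would argue that the connecting map $\partial$ vanishes. The boundary of the triple factors through the connecting map $\partial'$ of the pair $(\underline{\underline{E}}G,\underline{E}G)$, i.e.\ through $H_n^G(\underline{E}G\to\underline{\underline{E}}G;\KR)\xrightarrow{\partial'}H_{n-1}^G(\underline{E}G;\KR)$ followed by the map induced by inclusion. By Theorem \ref{barthels} the map $H_{n-1}^G(\underline{E}G;\KR)\to H_{n-1}^G(\underline{\underline{E}}G;\KR)$ is split injective, so exactness of the pair sequence forces $\partial'=0$, whence $\partial=0$ and the sequence above breaks into short exact sequences
\[
0 \to H_n^G(EG\to\underline{E}G;\KR) \to H_n^G(EG\to\underline{\underline{E}}G;\KR) \to H_n^G(\underline{E}G\to\underline{\underline{E}}G;\KR) \to 0 .
\]

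Finally, to see these split I would transport the splitting from Bartels' theorem. That theorem makes the surjection $\rho\colon H_n^G(\underline{\underline{E}}G;\KR)\to H_n^G(\underline{E}G\to\underline{\underline{E}}G;\KR)$ split, say with section $s_0$; composing $s_0$ with the natural map $c\colon H_n^G(\underline{\underline{E}}G;\KR)\to H_n^G(EG\to\underline{\underline{E}}G;\KR)$ yields a section of the right-hand surjection, since both $\rho$ and the composite $c$ followed by that surjection are induced by the same inclusion of pairs, so they agree and $s_0$ is carried to a genuine section. This gives $H_n^G(EG\to\underline{\underline{E}}G;\KR)\cong H_n^G(EG\to\underline{E}G;\KR)\oplus H_n^G(\underline{E}G\to\underline{\underline{E}}G;\KR)$, as claimed. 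The only real obstacle is bookkeeping: one must check that the arrows in the triple sequence are precisely those induced by the inclusions, so that the factorization of $\partial$ and the compatibility $c$-then-surjection $=\rho$ hold on the nose. Once the Eilenberg--Steenrod axioms (yielding the triple sequence and its naturality) and Bartels' injectivity are in hand, the argument is entirely formal.
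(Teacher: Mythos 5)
Your proposal is correct and takes essentially the same route as the paper: the paper realizes the triple $(\underline{\underline{E}}G,\underline{E}G,EG)$ via mapping cylinders, takes the long exact sequence of that triple, and compares it through a commutative ladder (with identity on the relative term $H_n^G(\underline{E}G\to\underline{\underline{E}}G;\KR)$) with the absolute sequence that Theorem \ref{barthels} splits, concluding just as you do. Your explicit checks that the connecting homomorphism vanishes and that the section transports along $c$ simply spell out what the paper compresses into ``the lower row splits, which leads to a splitting of the upper row.''
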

\begin{proof}
Consider the $G$-maps (unique up to $G$-homotopy) $f_1:EG\to \underline{E}G$, $f_2:\underline{E}G\to \underline{\underline{E}}G$ and $f_3:EG \to \underline{\underline{E}}G$. Up to $G$-homotopy we have that $f_3$ and $f_2\circ f_1$ are equal, hence we have that the mapping cylinder $M_{f_3}$ is $G$-homotopy equivalent to $M_{f_1}\cup_{\underline{E}G}M_{f_2}$. Now, from the long exact sequence  of the triple $(M_{f_3}, M_{f_1}, EG)$ and excision, we get the following long exact sequence
$$\cdots \to H_n^G(EG\to \underline{E}G;\KR)\to H_n^G(EG\to \underline{\underline{E}}G;\KR)\to H_n^G(\underline{E}G\to \underline{\underline{E}}G;\KR)\to \cdots, $$
which fits in the commutative diagram
$$\xymatrix{\cdots \ar[r] & H_n^G(EG\to \underline{E}G;\KR) \ar[r] & H_n^G(EG\to \underline{\underline{E}}G;\KR) \ar[r] & H_n^G(\underline{E}G\to \underline{\underline{E}}G;\KR)  \ar[r] & \cdots\\
\cdots \ar[r] & H_n^G(\underline{E}G;\KR) \ar[u] \ar[r] & H_n^G(\underline{\underline{E}}G;\KR) \ar[u]\ar[r] & H_n^G(\underline{E}G \to \underline{\underline{E}}G;\KR) \ar[u]^{=}\ar[r] & \cdots,
}$$
where the vertical arrows are the induced by the inclusions $(\underline{E}G,\emptyset)\hookrightarrow (M_{f_1}, EG)$ and $(\underline{\underline{E}}G,\emptyset)\hookrightarrow (M_{f_3}, EG)$. From Theorem \ref{barthels} we know that the lower row splits, which leads to a splitting of the upper row and this finishes the proof.
\end{proof}
Using this lemma we can divide our task into two parts: 

\begin{itemize}
    \item The computation of $H_n^G(EG\to\underline{E}G;\dbK_R)$, which in our case it is possible using the p-chain spectral sequence following the strategy as in \cite{BSS}, and
    \item the computation of $H_n^G(\underline{E}G\to \underline{\underline{E}}G;\dbK_R)$, which can be done, following the strategy in \cite{LR14}, using the L\"uck-Wiermann construction, the induction structure of the equivariant homology theory $H_*^G(-;\KR)$ and the explicit computation of the commensurators $N_G[H]$. 
\end{itemize}

 Now we proceed to describe the Baum-Connes conjecture, consider the complex group ring $\dbC(G)$ cannonically included in $L^2(G)$, the \emph{reduced group C$^\ast$-algebra of $G$} is the completion of $\dbC(G)$ in $L^2(G)$ and is denoted by $C_r^*(G)$. For $n\in\dbZ$, let $\Kn(C_r^*(G))$ denotes the topological K-theory groups of $C_r^*(G)$.

The Baum-Connes conjecture relates the topological K-theory groups of $C_r^*(G)$ with a more accesible object, namely the so called  \emph{$G$-equivariant K-homology}, it is the equivariant homology theory with coefficients in the topological K-theory spectrum $\dbK^{\operatorname{top}}$ described for example in \cite[Section 2]{LR05}. This theory is denoted by $H_n^G(-;\dbK^{\operatorname{top}})$. Due to the induction structure we have a very important property of this theory
 $$H_n^G(G/H;\dbK^{\operatorname{top}})\cong H_n^H(H/H;\dbK^{\operatorname{top}}) \cong \Kn(C_r^*(H)).$$

 \begin{conjecture}[Baum-Connes conjecture]Let $G$ be a discrete group. Then for any $n\in\dbZ$, the following assembly map, induced by the projection $\underline{E}G\to G/G$, is an isomorphism
 \nbeq\label{BC}\tag{$\ast\ast$}
\func{A_{\fin,\all}}{H^{G}_n(\underline{E}G;\dbK^{\operatorname{top}})}{H^{G}_n(G/G;\dbK^{\operatorname{top}})\cong \Kn(C_r^*(G))}.
 \neeq
 \end{conjecture}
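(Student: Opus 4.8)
The statement as phrased is the Baum--Connes conjecture for an arbitrary discrete group $G$; this is a famous open problem (and its version with coefficients is even known to fail for certain Gromov monster groups), so no proof can be given in this generality. What is both possible and sufficient for the present paper is to verify the conjecture for the only groups to which it is ever applied, namely the reduced Hilbert modular group $\philbert$ (and, if one wishes, $\hilbert$) attached to a totally real field $k$ with $[k:\dbQ]=n$. The plan is therefore to reduce to these groups and then invoke the Higson--Kasparov theorem.

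The structural input I would exploit is that $\hilbert$ embeds, via the $n$ real places of $k$, as a (nonuniform) lattice in the semisimple Lie group $SL_2(\dbR)^n$ acting properly on the product of $n$ upper half-planes, and similarly $\philbert$ is a lattice in $PSL_2(\dbR)^n$. First I would record that the rank-one Lie groups $SL_2(\dbR)$ and $PSL_2(\dbR)$ have the Haagerup property (are a-T-menable). Since a-T-menability is preserved under finite direct products and descends to closed subgroups, the products $SL_2(\dbR)^n$ and $PSL_2(\dbR)^n$ are a-T-menable, and so are the lattices $\hilbert$ and $\philbert$, which are discrete and therefore closed.

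With a-T-menability established, the second and decisive step is a direct appeal to the theorem of Higson and Kasparov: every second countable locally compact group with the Haagerup property satisfies the Baum--Connes conjecture (indeed with arbitrary coefficients). Applying this to the discrete group $\philbert$ shows that the assembly map $A_{\fin,\all}$ in \eqref{BC} is an isomorphism for every $n\in\dbZ$, which is precisely the identification $\Kn(C_r^*(\philbert))\cong H_n^{\philbert}(\underline{E}\philbert;\dbK^{\operatorname{top}})$ used in Section 6.

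The main obstacle is conceptual rather than computational. Because the conjecture is false in general for coefficients, there is no formal argument available, and the entire weight of the proof rests on the deep Higson--Kasparov machinery together with the geometric observation that the Hilbert modular group, being a lattice in a product of rank-one groups, inherits the Haagerup property. The only points demanding genuine care are the verification that a-T-menability really does pass to the nonuniform lattice (this is exactly where the closed-subgroup permanence property is needed, since these lattices are not cocompact) and the remark that we require only the version without coefficients, so that the full force of Higson--Kasparov is not strictly necessary.
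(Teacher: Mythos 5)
You are right that this statement is proved nowhere in the paper: it is stated as a conjecture, and for general discrete groups it is a famous open problem (with coefficients it even fails, as you note), so the only meaningful task is the one you set yourself, namely verifying it for the groups to which the paper applies it, and your verification is correct. It does, however, take a genuinely different route from the paper's. In Section 6 the paper disposes of this in one line: $\hilbert$ is a \emph{countable} (not discrete) subgroup of $SL_2(\dbR)$ under a single real embedding of $k$, and by the result cited as \cite{GHW} (Guentner--Higson--Weinberger), every countable subgroup of $SL_2$ over a field is a-T-menable and hence satisfies Baum--Connes by Higson--Kasparov; the same then holds for $\philbert$. Your route instead realizes $\hilbert$ (resp.\ $\philbert$) via all $n$ real places as a nonuniform lattice in $SL_2(\dbR)^n$ (resp.\ $PSL_2(\dbR)^n$), and combines the classical Haagerup property of the rank-one factors with its permanence under finite products and passage to closed subgroups (a discrete subgroup of a Hausdorff group is automatically closed, so non-cocompactness is indeed harmless), before invoking Higson--Kasparov. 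Both arguments are sound and both ultimately rest on Higson--Kasparov; yours is more self-contained and classical, needing only elementary permanence properties beyond that theorem, whereas the paper's citation buys a much more general statement---a-T-menability of all countable subgroups of $SL_2$ over any field---at the price of relying on a deeper external result.
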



\section{The Hilbert modular group}

In this section we review the definitions of the Hilbert modular group and its reduced version, next we recall some of their basic  properties. Then, we proceed to construct models for the classifying spaces for the family of virtually cyclic subgroups.
The results we state without proof in this section can be found
for example in \cite{Fr90}. For additional information about the Hilbert modular group we refer the reader to \cite{Hi73}, \cite{Ge88}, and to \cite{Ef87} for the reduced Hilbert group.\\

A \textit{totally real number field} $k$ is a finite extension of $\dbQ$, of degree $n$, such that all its embedings $\sigma_i:k\to \dbC$ have image contained in $\dbR$.

\begin{definition}
 Let $k$ denote a totally real number field of degree $n$ and $\calO_k$ its ring of algebraic integers. The \textit{Hilbert modular group} is by definition $SL_2(\calO_k)$, also we call the quotient $PSL_2(\calO_k)=SL_2(\calO_k)/\{I,-I\}$ the \textit{reduced Hilbert modular group}, where $I$ is the identity matrix. From now on we will denote the Hilbert modular group with the letter $\Gamma$, the reduced Hilbert modular group with the letter $G$, and $p:\Gamma \to G$ the canonical projection.\\
\end{definition}

By definition an element in $\philbert$ is a class that has exactly two representants, say a matrix $A$ and its negative $-A$. From now on, we will make an abuse of notation and we will not distinguish between a matrix in $\hilbert$ and its class in $\philbert$.\\

Note that if $k=\dbQ$, then $PSL_2(\calO_k)=PSL_2(\dbZ)$ is nothing but the classical
modular group, hence it is a discrete subgroup of $PSL_2(\dbR)$, and admits a proper and discontinuous action in the hyperbolic plane $\dbH$ via M\"obius transformations. However, $PSL_2(\calO_k)$ \textit{is not} a discrete subgroup
of $PSL_2(\dbR)$ if $n\geq 2$, where $n$ is the degree of $k$ over $\dbQ$. Yet we can define the embedding
\begin{equation}\label{hilbertembedding}
\sigma: PSL_2(\calO_k)\to PSL_2(\dbR)^n=PSL_2(\dbR)\times\cdots\times PSL_2(\dbR)\end{equation}
by
\begin{equation*}
 \begin{pmatrix}
   \alpha & \beta\\
   \gamma & \delta
 \end{pmatrix} \mapsto \left(  \begin{pmatrix} 
   \sigma_1(\alpha) & \sigma_1(\beta)\\
   \sigma_1(\gamma) & \sigma_1(\delta)
 \end{pmatrix},...,  \begin{pmatrix}
   \sigma_n(\alpha) & \sigma_n(\beta)\\
   \sigma_n(\gamma) & \sigma_n(\delta)
 \end{pmatrix}  \right),   
\end{equation*}
and identifying the Hilbert modular group with its image we can think of it as a discrete subgroup of $PSL_2(\dbR)^n$. Considering the diagonal action of $PSL_2(\dbR)^n$ in the $n$-fold product $\dbH\times\cdots\times\dbH$, we have that the Hilbert modular group it does act properly and discontinuously in this $n$-fold product space. All we have established above is also true 
for $\hilbert$ in the non-projectivised setting.\\



Next, we would like to analyze the commensurators $N_G[H]$, for $G=\philbert$, the reduced Hilbert modular group and $H$ an infinite virtually cyclic subgroup. Since the commensurator does not depend on the commensuration class of $H$ we can asume that $H$ is an infinite cyclic group generated by an infinite order element $\alpha \in G$. Because of this it is useful to have a classification of elements in the reduced Hilbert modular group, so that we can analyze the commensurators case by case. Recall that we are considering $\dbH$ with the model of the upper half plane. 

\begin{definition}
  Consider an element $\alpha\in PSL_2(\dbR)$. We say that $\alpha$ is 
  \begin{itemize}
      \item elliptic if $Tr(\alpha)^2<4$,
      \item parabolic if $Tr(\alpha)^2=4$, and
      \item hyperbolic if $Tr(\alpha)^2>4$.
    \end{itemize}
\end{definition}

Note that the action of $PSL_2(\dbR)$ on $\dbH$ can be extended to an action of $\overline{\dbH}=\dbH\cup \dbR \cup\{\infty\}$.

\begin{lemma}
Consider an element $\alpha\in PSL_2(\dbR)$. Then
  \begin{enumerate}
     \item $\alpha$ is elliptic if and only if it fixes exactly one point in $\dbH$.
     \item $\alpha$ is parabolic if and only if it fixes exactly one point in $\dbR \cup\{\infty\}$.
     \item $\alpha$ is hyperbolic if and only if it fixes exactly two points in $\dbR \cup\{\infty\}$.
 \end{enumerate}
\end{lemma}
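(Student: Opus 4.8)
The plan is to reduce everything to the explicit fixed-point equation of the M\"obius action and then read off the number and location of the fixed points from a single discriminant. First I would lift $\alpha$ to a matrix $\begin{pmatrix} a & b \\ c & d \end{pmatrix}$ with $ad-bc=1$; the lift is unique up to sign, and neither $\operatorname{Tr}(\alpha)^2$ nor the action on $\overline{\dbH}$ depends on that sign, so this is harmless. The action is $z\mapsto (az+b)/(cz+d)$, extended to $\overline{\dbH}=\dbH\cup\dbR\cup\{\infty\}$ as in the remark preceding the statement.

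Assuming first that $c\neq 0$, a point $z$ is fixed precisely when $cz^2+(d-a)z-b=0$. Its discriminant is $(d-a)^2+4bc=(a+d)^2-4=\operatorname{Tr}(\alpha)^2-4$, where I used $bc=ad-1$. This single identity does all the work. If $\operatorname{Tr}(\alpha)^2<4$ the discriminant is negative, so the two roots are complex conjugates; as the coefficients are real, exactly one of them lies in the upper half plane $\dbH$, giving (1). If $\operatorname{Tr}(\alpha)^2=4$ there is a single real double root and no fixed point in $\dbH$, giving (2). If $\operatorname{Tr}(\alpha)^2>4$ there are two distinct real roots, giving (3).

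It then remains to treat the degenerate case $c=0$, where $\infty$ is fixed and $\alpha$ acts by the affine map $z\mapsto a^2 z+ab$ (using $d=a^{-1}$). Here if $a^2\neq 1$ there is a second, finite real fixed point and $\operatorname{Tr}(\alpha)^2=(a+a^{-1})^2>4$, consistent with (3); if $a^2=1$ and $\alpha$ is not the identity then $\infty$ is the only fixed point and $\operatorname{Tr}(\alpha)^2=4$, consistent with (2). The identity must be excluded in (2), since it has $\operatorname{Tr}(\alpha)^2=4$ yet fixes every point, and I would state this caveat explicitly.

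Finally, I would observe that the three trace conditions partition the nonidentity elements and that the three geometric conditions---exactly one fixed point in $\dbH$, exactly one in $\dbR\cup\{\infty\}$, exactly two in $\dbR\cup\{\infty\}$---are mutually exclusive, so the forward implications proved above immediately yield all the converses by exhaustion. The only real bookkeeping, and the only place any care is needed, is the $c=0$ branch together with the exclusion of the identity; the argument that a nonreal root of a real quadratic lies in $\dbH$ is immediate from complex conjugation. There is no genuine obstacle here, as this is a classical computation, so I expect the main effort to be organizing the case analysis cleanly rather than overcoming any difficulty.
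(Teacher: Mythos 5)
Your proof is correct, and the caveat you raise is genuine. For comparison: the paper does not prove this lemma at all --- it appears in the block of facts ``stated without proof'' and deferred to Freitag's book \cite{Fr90}, so there is no in-paper argument to measure yours against. Your argument is the standard one, and it is complete: the identity $(d-a)^2+4bc=\operatorname{Tr}(\alpha)^2-4$ reduces everything to one discriminant, the $c=0$ branch is handled correctly (including the verification that $a^2\neq 1$ forces $(a+a^{-1})^2>4$), and the converses do follow by exhaustion because your case analysis determines the \emph{entire} fixed-point set in $\overline{\dbH}$ in each trace regime, so the three outcomes are pairwise incompatible. Two small remarks. First, your exclusion of the identity is not pedantry: as literally stated, part (2) of the lemma is false for $\alpha=\mathrm{id}$, which has $\operatorname{Tr}(\alpha)^2=4$ but fixes all of $\overline{\dbH}$; the paper (like most sources) leaves this convention implicit, and making it explicit is the right call. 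Second, when you invoke mutual exclusivity for the converses, it is worth saying that this exclusivity comes from the full fixed-point-set description (e.g.\ elliptic elements fix \emph{nothing} on $\dbR\cup\{\infty\}$), not merely from the three conditions as phrased, since a priori ``one fixed point in $\dbH$'' and ``one fixed point in $\dbR\cup\{\infty\}$'' could coexist; your computation does rule this out, so this is a matter of exposition, not a gap.
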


It may happen that the image of the matrix $\begin{pmatrix}
   \alpha & \beta\\
   \gamma & \delta
 \end{pmatrix}$ in the reduced Hilbert modular group $PSL_2(\calO_k)$ under the embedding (\ref{hilbertembedding}) has both elliptic components and hyperbolic components. For example, if $k=\dbQ(\sqrt{2})$, then there are two embeddings of $k$ into $\dbR$, namely
\begin{equation}
\sigma_1: s+t\sqrt{2}\mapsto s+t\sqrt{2}\ \ \ \ \ \ \text{and}\ \ \ \ \ \ \sigma_2:s+t\sqrt{d}\mapsto s-t\sqrt{2},
\end{equation}
for $s,t\in\dbQ$. In this case, the matrix  
$\begin{pmatrix}
1+\sqrt{2} & 1+\sqrt{2}\\
2 & 1+\sqrt{2}
\end{pmatrix}\in PSL_2(O_k)$ is mapped by the embedding (\ref{hilbertembedding}) to

\begin{equation*}
  \left(\begin{pmatrix}
1+\sqrt{2} & 1+\sqrt{2}\\
2 & 1+\sqrt{2}
\end{pmatrix}, \begin{pmatrix}
1-\sqrt{2} & 1-\sqrt{2}\\
2 & 1-\sqrt{2}
\end{pmatrix}\right).
\end{equation*}
which clearly has first component elliptic and second component hyperbolic. This is an example of what we will call a \textit{mixed} element. It is pointed out in \cite[Page 11, third paragraph]{Ef87} that cannot exist mixed elements with parabolic components.


\begin{definition}
  Consider an element $\alpha$ of the reduced Hilbert modular group $PSL_2(\calO_k)$, and denote by $\overline{\alpha}$ its image under (\ref{hilbertembedding}). 
  
  \begin{enumerate}
      \item We say that $\alpha$ is totally elliptic (resp. hyperbolic, parabolic) if all of the components of $\overline{\alpha}$ are elliptic (resp. hyperbolic, parabolic).
      \item We say that $\alpha$ is mixed if $\overline{\alpha}$ has both elliptic components and hyperbolic components.
      \item We say that $\alpha$ is hyperbolic-parabolic if it is totally hyperbolic and there exist one point in $(\dbR \cup \{\infty\})^n$ fixed by $\overline{\alpha}$ that it is fixed by some totally parabolic element of $PSL_2(\calO_k)$.      
  \end{enumerate}  
\end{definition}

Continuing with the example above, we can get an hyperbolic-parabolic element by considering $\alpha=\begin{pmatrix}
1+\sqrt{2} & 0\\
0 & (1+\sqrt{2})^{-1}
\end{pmatrix}$, then the fixed points of $\overline{\alpha}$ are $(\infty, \infty)$, $(0,0)$, $(\infty,0)$, and $(0,\infty)$ and  the first two points are fixed by a totally parabolic element.\\

Now, since it is clear that the elements of $PSL_2(\calO_k)$ of finite order are those that happen to be totally elliptic, Proposition \ref{mixedcom} and Proposition \ref{parabcom} describe the commensurators of all infinite cyclic subgroups of $\philbert$.

\begin{lemma}\label{normalizers}
Let $G=PSL_2(\calO_k)$. Consider an infinite order element $\alpha\in G$ and denote by $H$ the infinite cyclic subgroup generated by $\alpha$. Then, the normalizer $N_G(H)$ fits in the short exact sequence $$1\to C_G(H) \to N_G(H) \to F \to 1,$$ where $F$ is a subgroup of $\dbZ/2$, and $C_G(H)$ is the centralizer of $H$ in $G$. 
\end{lemma}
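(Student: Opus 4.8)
The plan is to reduce the statement to the elementary, entirely general fact that for any subgroup $H$ of a group $G$ the centralizer $C_G(H)$ is normal in the normalizer $N_G(H)$, with quotient embedding into $\operatorname{Aut}(H)$ via conjugation. The only input specific to our situation is that $\alpha$ has infinite order, so that $H=\langle\alpha\rangle$ is infinite cyclic, i.e. $H\cong\dbZ$. This is what forces the cyclic factor $F$ to be small.

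First I would introduce the conjugation homomorphism $\varphi\colon N_G(H)\to\operatorname{Aut}(H)$ defined by $\varphi(g)(h)=ghg^{-1}$. This is well defined precisely because $g\in N_G(H)$ guarantees $gHg^{-1}=H$, so that $\varphi(g)$ really is an automorphism of $H$, and it is clearly a group homomorphism. The next step is to identify its kernel: an element $g$ lies in $\ker\varphi$ if and only if $ghg^{-1}=h$ for every $h\in H$, which is exactly the condition $g\in C_G(H)$. This already yields the exact sequence
$$1\to C_G(H)\to N_G(H)\xrightarrow{\ \varphi\ }\operatorname{Aut}(H).$$

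Finally, since $H\cong\dbZ$ we have $\operatorname{Aut}(\dbZ)\cong\dbZ/2$, the only automorphisms being the identity and $h\mapsto h^{-1}$. Setting $F:=\operatorname{im}(\varphi)$, which is a subgroup of $\dbZ/2$, gives the desired short exact sequence $1\to C_G(H)\to N_G(H)\to F\to 1$ with $F\leq\dbZ/2$, as claimed.

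I do not expect any genuine obstacle: the assertion is purely formal group theory and uses nothing about $PSL_2(\calO_k)$ beyond the infinite order of $\alpha$. The one point worth stating explicitly is the computation $\operatorname{Aut}(\dbZ)\cong\dbZ/2$, which is what pins down that $F$ is either trivial or all of $\dbZ/2$. Determining which of the two possibilities actually occurs for each type of infinite-order element is a finer question that is not needed here; it is presumably settled case by case in Proposition \ref{mixedcom} and Proposition \ref{parabcom}.
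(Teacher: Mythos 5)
Your proof is correct and complete for the stated lemma, but it takes a genuinely different route from the paper's. You argue purely algebraically: conjugation defines $\varphi\colon N_G(H)\to\operatorname{Aut}(H)$, its kernel is exactly $C_G(H)$ (which lies inside $N_G(H)$ automatically), and $\operatorname{Aut}(H)\cong\operatorname{Aut}(\dbZ)\cong\dbZ/2$ forces $F=\operatorname{im}(\varphi)\leq\dbZ/2$; nothing about $PSL_2(\calO_k)$ is used beyond the fact that $H\cong\dbZ$. The paper instead argues geometrically, via the action on $\overline{\dbH}$: the normalizer permutes the fixed point set $\overline{\dbH}^H$, which hyperbolic geometry says consists of at most two boundary points, so one gets a map onto a subgroup $F$ of $\dbZ/2$ whose kernel is the set of elements fixing $\overline{\dbH}^H$ pointwise, and this kernel is identified with $C_G(H)$ using the fact that two isometries of $\dbH$ commute precisely when they have the same fixed point set. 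Your route is cleaner and more general for the lemma itself. What the geometric formulation buys is reusability in the arguments that follow, where your homomorphism is not directly available: in Proposition \ref{mixedcom} the same fixed-point-set action is applied to the commensurator $N_G[H]=\bigcup_i N_G(\langle\alpha^i\rangle)$, whose elements need not normalize $H$ itself, and the geometric description of a nontrivial element of $F$ (it acts as an involution on the geodesic joining the two fixed points) is what shows such an element has order two, hence that the sequence splits as $N_G(H)\cong C_G(H)\rtimes\dbZ/2$. None of that extra information is claimed in the lemma, so your proposal stands on its own; just note that the geometric viewpoint is not idle in the paper but is the engine of the later propositions, which, as you anticipate, are where the actual size of $F$ is determined case by case.
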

\begin{proof}
 Consider the action of $G$ in $\overline{\dbH}$. It is well known that the normalizer $N_G(H)$ acts in the fixed point set $\overline{\dbH}^H$ of $H$, which is exactly the fixed point set of $\alpha$. On the other hand, from hyperbolic geometry, we know that $\overline{\dbH}^H$ consists of at most two points (in the boundary of $\overline{\dbH}$). Hence, we have the short exact sequence $$1\to K \to N_G(H) \to F \to 1$$ where $F$ is a subgroup of $\dbZ/2$, and $K$ consists of the elements $g$ of $G$ such that they act trivially in $\overline{\dbH}^H$, i.e., $g$ has $\overline{\dbH}^H$ as its fixed point set . Again, by hyperbolic geometry we know that two elements in $G$ commute if and only if they have the same fixed point set, hence $K=C_G(H)$.
\end{proof}


\begin{proposition}\label{mixedcom}
Let $G=PSL_2(\calO_k)$. Consider an element $\alpha\in G$ with $m$ hyperbolic components, $1\leq m \leq n$, and let $H$ be the infinite cyclic subgroup generated by $\alpha$. Then,
$$N_G[H]\cong N_G(H)\cong  \begin{cases}
 \dbZ^m \text{ or } \dbZ^m\rtimes \dbZ /2         &\text{if $\alpha$ if not hyperbolic-parabolic, and}\\
 \dbZ^{n-1} \text{ or } \dbZ^{n-1}\rtimes \dbZ /2 &\text{if $\alpha$ is hyperbolic-parabolic,}
\end{cases}$$
where the action of $\dbZ/2$ in the semidirect products is given by multiplication by $-1$, in particular is free away from the origin.
\end{proposition}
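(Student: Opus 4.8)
The plan is to first reduce the computation of the commensurator $N_G[H]$ to that of the ordinary normalizer $N_G(H)$, and then to compute $N_G(H)$ through its centralizer. For the reduction I would take $g\in N_G[H]$, so that $\langle g^{-1}\alpha g\rangle\cap\langle\alpha\rangle$ is infinite and hence $g^{-1}\alpha^a g=\alpha^b$ for some nonzero integers $a,b$. Comparing eigenvalues componentwise in the (at least one) hyperbolic factor, where $\alpha$ has an eigenvalue that is not a root of unity, forces $b=\pm a$, so $g$ either centralizes $\alpha^a$ or conjugates it to $\alpha^{-a}$. Since in each factor the centralizer of a regular element agrees with that of its nonzero powers, $C_G(\alpha^a)=C_G(\alpha)$; using that $C_G(H)$ is torsion-free abelian (established below, independently of this step), the relation $g^{-1}\alpha^a g=\alpha^{\pm a}$ then upgrades to $g^{-1}\alpha g=\alpha^{\pm1}$, giving $g\in N_G(H)$ and thus $N_G[H]=N_G(H)$. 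By Lemma~\ref{normalizers} it then remains to identify $C_G(H)$, since $N_G(H)$ is an extension of a subgroup $F\le\dbZ/2$ by $C_G(H)$.

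To compute $C_G(H)$ I would use that $\alpha$ has infinite order with at least one hyperbolic component, hence is regular semisimple, so $k[\alpha]\subseteq M_2(k)$ is a $2$-dimensional commutative $k$-algebra $A$: the field $K=k(\alpha)$ when the eigenvalues are irrational over $k$, and $A\cong k\times k$ when they are $k$-rational. Any matrix commuting with $\alpha$ lies in $A$; lifting to $SL_2$ and imposing integrality identifies $C_{SL_2(\calO_k)}(\alpha)$ with the group $\calO^{1}$ of norm-one units of the order $\calO=A\cap M_2(\calO_k)$, where the determinant corresponds to $N_{A/k}$, and passing to $PSL_2$ gives $C_G(H)\cong\calO^{1}/\{\pm1\}$. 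Because $m\ge1$, the algebra $A$ has at least one real place, so its only roots of unity are $\pm1$; this makes $\calO^{1}/\{\pm1\}$ torsion-free, and as a discrete finitely generated abelian group it is free of some rank $r$.

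The rank $r$ I would then read off from Dirichlet's unit theorem. Each of the $m$ hyperbolic factors makes the corresponding place of $k$ split, contributing two real places of $K$, while each of the $n-m$ elliptic factors contributes one complex place, so $K$ has $2m$ real and $n-m$ complex places and its unit lattice spans the trace-zero hyperplane of dimension $n+m-1$. The $n$ relations $N_{A/k}(u)=1$ cut out the subspace where the logarithms lying over each place of $k$ sum to zero; one of these coincides with the trace-zero condition, so $\calO^{1}$ is a full lattice in a space of dimension $(n+m)-n=m$, giving $r=m$. This handles the non hyperbolic-parabolic case, both the mixed case $m<n$ and the totally hyperbolic field case $m=n$. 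An element is hyperbolic-parabolic precisely when it is totally hyperbolic with $k$-rational fixed points, i.e. when $A\cong k\times k$; there $\calO^{1}=\{(u,u^{-1}):u\in\calO_k^\times\}$, so $C_G(H)\cong\calO_k^\times/\{\pm1\}\cong\dbZ^{n-1}$.

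Finally, when $F=\dbZ/2$ a generator of $N_G(H)/C_G(H)$ conjugates $\alpha$ to $\alpha^{-1}$, which on $A=K$ is the nontrivial Galois automorphism over $k$ and on norm-one units is $u\mapsto\bar u=u^{-1}$, i.e. inversion; under $\calO^{1}/\{\pm1\}\cong\dbZ^{r}$ this is multiplication by $-1$, which is free away from the origin. This yields the stated possibilities $\dbZ^{m}$ or $\dbZ^{m}\rtimes\dbZ/2$, respectively $\dbZ^{n-1}$ or $\dbZ^{n-1}\rtimes\dbZ/2$. I expect the main obstacle to be the relative unit-rank bookkeeping: reading the signature of $A$ correctly off the component types, controlling the order $\calO$ (which need not be maximal, though it is commensurable with the maximal order and so has the same unit rank), and treating the split algebra $A\cong k\times k$ on the same footing as the field case.
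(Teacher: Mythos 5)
Your proof is correct, and it takes a genuinely different route from the paper's. The paper argues geometrically throughout: it reduces the commensurator to the normalizer via the CAT(0) identity $N_G[H]=\bigcup_{i\geq 1}N_G(\langle\alpha^i\rangle)$ (quoted from \cite{DP152}), reruns the fixed-point-set argument of Lemma \ref{normalizers} for $N_G[H]$, obtains the splitting and the $-1$ action by looking at the involution induced on the geodesic joining the two fixed points, and then simply \emph{cites} Efrat \cite{Ef87} for the fact that $C_G(H)$ is free abelian of rank $m$ (resp.\ $n-1$). You replace both external inputs with arithmetic: the reduction $N_G[H]=N_G(H)$ follows from comparing eigenvalue absolute values in a hyperbolic factor together with uniqueness of roots in the torsion-free abelian group $C_G(H)$; the centralizer is identified with the norm-one units $\calO^{1}/\{\pm 1\}$ of an order in the quadratic \'etale algebra $k[\alpha]$, whose rank you read off from Dirichlet's unit theorem ($2m$ real and $n-m$ complex places of $K$, cut by $n$ norm conditions, one of which is redundant); and the $\dbZ/2$-action is inversion because conjugation induces the nontrivial $k$-automorphism of $k[\alpha]$, under which norm-one units satisfy $\bar u=u^{-1}$. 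What your approach buys is a self-contained proof of the rank statement, including a conceptual explanation of the drop to $n-1$: hyperbolic-parabolic means precisely that $k[\alpha]\cong k\times k$ is split. What the paper's approach buys is brevity and consistency with the hyperbolic geometry used elsewhere in the article. Two steps you should spell out to make your argument airtight: (i) the equivalence ``hyperbolic-parabolic $\Leftrightarrow$ totally hyperbolic with split algebra'' uses the standard fact that the points of $(\dbR\cup\{\infty\})^n$ fixed by totally parabolic elements of $G$ are exactly the diagonally embedded $k$-rational points of the projective line (the cusps of the Hilbert modular group); and (ii) the extension $1\to C_G(H)\to N_G(H)\to\dbZ/2\to 1$ actually splits, as the statement asserts a semidirect product --- in your framework this is immediate, since a preimage $\beta$ of the generator has $\beta^2\in C_G(H)$ commuting with $\beta$, so $\beta^2=-\beta^2$ under the inversion action, forcing $\beta^2=1$ by torsion-freeness.
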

\begin{proof}
First, we compute the normalizer $N_G(H)$. By Lemma \ref{normalizers}, we have two cases. If $F$ is trivial we have that $N_G(H)= C_G(H)$. Now, if $F= \dbZ/2$, then any element $\beta$ in $N_G(H)$ that is not in $C_G(H)$ acts non-trivially in the fixed point set of $\alpha$ (considering the action of $\alpha$ in $\overline{\dbH}$), hence $\beta$ acts on the geodesic that joins the fixed points of $\alpha$ by an involution, and we conclude that $\beta$ is an element of order two, so that the short exact sequence in Lemma \ref{normalizers} splits, and $N_G(H)\cong C_G(H)\rtimes \dbZ/2$.\\
Now we shall consider the action of $G$ in the $n$-fold product $\dbH^n$. Since the hyperbolic plane is a $CAT(0)$-space with the hyperbolic metric, we have that $\dbH^n$ is also a $CAT(0)$-space with the product metric. Moreover, $\alpha$ is a hyperbolic isometry (in the $CAT(0)$ sense), and we have that (see \cite[Proof of Proposition 4.4]{DP152} $$N_G[H]=\{g\in G| \exists n\in\dbZ \text{ such that }g^{-1}\alpha^ng=\alpha^{\pm n}\}=\bigcup\limits_{i=1}^{\infty} N_G(\langle \alpha^i \rangle).$$ 
Note that $\alpha^i$ and $\alpha$ have the same fixed point set, then $N_G(\langle \alpha^i \rangle)$ acts in the fixed point set $\overline{\dbH}^H$ of $\alpha$. We deduce that $N_G[H]$ acts in $\overline{\dbH}^H$, and we have the short exact sequence $$1\to \overline{K}\to N_G[H] \to F \to 1,$$ where $F$ is a subgroup of $\dbZ/2$, and $\overline{K}$ consists of those elemets of $N_G[H]$ that have $\overline{\dbH}^H$ as fixed point set, so that $\overline{K}=C_G(H)$. Finally, we have that $N_G[H]\cong C_G(H)\rtimes F$. If $F=\dbZ/2$, by restricting the action of $N_G[H]$ to the geodesic with end points in $\overline{\dbH}^H$, we can see that the  non-trivial element of $F$ acts by multiplication by $-1$.\\
The only thing left is to describe the centralizers, this was done by Efrat in Theorem 5.7 page 26 and Proposition 3.1 page 93 of \cite{Ef87}, where it is proved that $C_G(H)$ is a free abelian group of rank $m$ (resp. $n-1$) if $\alpha$ is not an hyerbolic-parabolic (resp. hyperbolic-parabolic) element.
\end{proof}

Considering, again, the hyperbolic-parabolic element $\alpha=\begin{pmatrix}
1+\sqrt{2} & 0\\
0 & (1+\sqrt{2})^{-1}
\end{pmatrix}$, we can see that the order two element $\beta=\begin{pmatrix} 0 & -1\\
1 & 0
\end{pmatrix}$ belongs to the normalizer $N_G(H)$, therefore we have the isomorphism $N_G(H)\cong \dbZ^{n-1}\rtimes \dbZ /2$. We still don't know whether there exist or not some infinite cyclic subgroup $H$ of the reduced Hilbert modular group for which the normalizer $N_G(H)$ is free abelian. 


\begin{proposition}\label{parabcom}
Let $G=PSL_2(\calO_k)$. Consider a totally parabolic element $\alpha\in G$, and denote by $H$ the infinite cyclic subgroup generated by $\alpha$. Then, we have that
    $$N_G[H]\cong N_G(H) \cong \dbZ^{n}.$$
\end{proposition}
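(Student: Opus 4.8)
The plan is to show that every element commensurating $H$ must fix the cusp determined by $\alpha$, and then to read off $N_G[H]$ from the (well-understood) stabilizer of that cusp; the commensuration relation and the potential $\dbZ/2$ of Lemma \ref{normalizers} will both turn out to contribute nothing.

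First I would set up the boundary picture. As $\alpha$ is totally parabolic, each component $\sigma_i(\alpha)$ is parabolic and fixes a single point of $\dbR\cup\{\infty\}$, so $\overline\alpha$ has a unique fixed point $\xi\in(\dbR\cup\{\infty\})^n$ and none in $\dbH^n$. I claim $N_G[H]$ fixes $\xi$. If $g\in N_G[H]$ then $g^{-1}\langle\alpha\rangle g\cap\langle\alpha\rangle$ is infinite, so $g^{-1}\alpha^pg=\alpha^q$ for some nonzero integers $p,q$; rewriting as $\alpha^pg=g\alpha^q$ and evaluating at $\xi$ shows that $g\xi$ is fixed by $\alpha^p$, whence $g\xi=\xi$ since $\xi$ is the unique fixed point of the parabolic $\alpha^p$. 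Hence $N_G[H]\subseteq P:=\mathrm{Stab}_G(\xi)$.

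Next I would invoke the classical description of cusp stabilizers of the Hilbert modular group (see \cite{Ef87}). After conjugating $\xi$ to $(\infty,\dots,\infty)$ by an element of $PSL_2(\dbR)^n$, the subgroup $P$ consists of upper triangular matrices and splits as $T\rtimes U$, where the translation part $T=\{\,\begin{pmatrix}1&b\\0&1\end{pmatrix}\,\}$ is isomorphic to $(\calO_k,+)\cong\dbZ^n$ and the unit part $U\cong\calO_k^\ast/\{\pm1\}$ acts on $T$ by $b\mapsto u^2b$. Since a parabolic element has a single boundary fixed point, $\alpha$ itself must have trivial unit part, i.e.\ $\alpha=\begin{pmatrix}1&b_0\\0&1\end{pmatrix}$ with $b_0\neq0$. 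A one-line matrix computation then gives, for $g=\begin{pmatrix}u&\ast\\0&u^{-1}\end{pmatrix}\in P$, that $g\alpha^kg^{-1}$ is the translation by $u^2kb_0$.

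Finally I would extract the arithmetic condition. An element $g\in P$ lies in $N_G[H]$ exactly when $g\alpha^kg^{-1}\in\langle\alpha\rangle$ for some $k\neq0$, that is when $u^2k\in\dbZ$, which forces $u^2\in\dbQ$. But $u^2\in\dbQ\cap\calO_k=\dbZ$ is a unit and, being the square of a real number, equals $1$; hence $u=\pm1$ and $g\in T$. Thus $N_G[H]\subseteq T$, while conversely $T=C_G(H)\subseteq N_G(H)\subseteq N_G[H]$ since translations commute with $\alpha$. Therefore $N_G(H)=N_G[H]=T\cong\dbZ^n$; in particular the quotient $F$ of Lemma \ref{normalizers} is trivial, reflecting the fact that a parabolic cannot be conjugated to its inverse. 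The step I expect to be the main obstacle is the precise cusp-stabilizer structure used above — verifying that the translation lattice has rank exactly $n$ and that the units act through their squares — which is exactly where the totally real hypothesis and Efrat's cusp analysis are needed; granting that, the collapse to $\dbZ^n$ is forced purely arithmetically.
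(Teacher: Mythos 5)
Your proposal is correct and takes essentially the same route as the paper's proof: both show that any commensurating element must fix the unique boundary fixed point of $\alpha$ (you argue this dynamically; the paper phrases the same observation as the matrix computation forcing $c=0$, $d=a^{-1}$), reduce to the cusp at infinity, and then run the identical arithmetic punchline $u^{2}\in\dbQ\cap\calO_k=\dbZ\Rightarrow u=\pm1$ to collapse $N_G[H]$ onto the rank-$n$ translation lattice. The one point to tighten is that the conjugation carrying $\xi$ to $(\infty,\dots,\infty)$ must be by a matrix in $GL_2(k)$ (diagonally embedded), not an arbitrary element of $PSL_2(\dbR)^n$, so that the cusp stabilizer keeps its arithmetic description; this is precisely how the paper treats the case where $\alpha$ does not fix $\infty$, conjugating by $M\in GL_2(k)$ and citing Freitag's commensurability of $M^{-1}GM$ with $G$.
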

\begin{proof}
First, we will suppose that $\alpha$ acting on the hyperbolic plane $\dbH$  fixes $\infty$. In this case $\alpha$ is a translation, i.e., is represented by a matrix of the following form $A=\begin{pmatrix}
1 & t\\
0 & 1
\end{pmatrix}$ with $t\in \calO_k$. Now suppose that the matrix $B=\begin{pmatrix}
a & b\\
c & d
\end{pmatrix}$ belongs to $N_G[H]$. Since $(B^{-1}AB)^n$ is a translation for some $n\in \dbZ$, hence $B^{-1}AB$ should be itself a translation and  a direct computation shows that $c=0$, $d=a^{-1}$, so that
$$B^{-1}AB=\begin{pmatrix}
1 & ta^{-2}\\
0 &1
\end{pmatrix}.$$
By definition of $N_G[H]$ we know that there exist $n,m\in \dbZ$ such that $A^m=B^{-1}A^nB$, which is equivalent to the following identity
$$\begin{pmatrix}
1 & mt\\
0 & 1
\end{pmatrix}=\begin{pmatrix}
1 & ta^{-2}n\\
0 & 1
\end{pmatrix},$$
it follows that $a^{-2}=\frac{m}{n}$, so that $a^{-2}\in \dbQ\cap \calO_k =\dbZ$, hence $a=\pm 1$. Since $\calO_k \cong \dbZ^n$ as abelian groups, we have that $N_G[H]\cong \dbZ^n$. We have shown that $B=\begin{pmatrix}
\pm 1 & b\\
0 & \pm 1
\end{pmatrix}$, this is, $B$ is a translation. It is now clear that $N_G[H]=N_G(H)$.

Now suppose that $\alpha$ does not fix $\infty$ and it is represented by the matrix $A'$. We can find a matrix $M\in GL_2(k)$ such that $M^{-1}A'M$ fixes infinity (see \cite[Proof of Proposition 3.4]{Fr90}). Using the argument in the previous case we can show that $M^{-1}N_G[H]M$ is an additive subgroup of $\dbR$. On the other hand $M^{-1} GM$ is commensurable with $G$ in $PSL_2(\dbR)$ (see \cite[Corollary 3.3]{Fr90}, hence $M^{-1}N_G[H]M$ and $N_G[H]$ are commensurable. We can conclude that $M^{-1}N_G[H]M$ is isomorphic to $\dbZ^n$. As is the previous case we can see that the elements in $M^{-1}N_G[H]M$ are translations, this implies that $N_G[H]=N_G(H)$.
\end{proof}


In order to construct a model for $\underline{E}G$ we will simplify the push-out given in Theorem \ref{luckweiermannthm}. For this we will need the following lemma.

\begin{lemma}\label{HinHmax}
 Let $G=PSL_2(\calO_k)$. Then each infinite cyclic subgroup $H$ of $G$ is contained in a unique maximal infinite cyclic subgroup $H_{max}$ of $G$.
\end{lemma}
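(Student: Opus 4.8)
The plan is to reduce everything to an elementary fact about finitely generated free abelian groups, feeding in the centralizer computations from Proposition \ref{mixedcom} and Proposition \ref{parabcom}. Write $H=\langle\alpha\rangle$ with $\alpha$ of infinite order; since $\alpha$ is not totally elliptic it is mixed, totally hyperbolic, or totally parabolic, so one of those two propositions applies and tells us that $C:=C_G(H)=C_G(\alpha)$ is free abelian of some finite rank $r$.

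First I would show that any infinite cyclic subgroup $H'=\langle\beta\rangle$ of $G$ containing $H$ is automatically contained in $C$: from $\alpha\in\langle\beta\rangle$ and the commutativity of $\langle\beta\rangle$ we get $\beta\in C_G(\alpha)=C$. Hence the infinite cyclic subgroups of $G$ lying over $H$ are exactly those of $C$ lying over $H$, and ``maximal in $G$'' agrees with ``maximal in $C$'' for such subgroups; it therefore suffices to work inside the free abelian group $C$.

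Then I would run the standard free abelian argument. Identifying $C\cong\dbZ^{r}$ and viewing $\alpha$ as a nonzero vector $v$, let $d$ be the greatest common divisor of its coordinates and put $w=v/d$, a primitive vector. Then $\langle w\rangle\supseteq\langle v\rangle=H$, and primitivity makes $\langle w\rangle$ a direct summand of $C$, hence a maximal infinite cyclic subgroup of $C$. For uniqueness, any cyclic $\langle u\rangle\subseteq C$ with $\alpha\in\langle u\rangle$ satisfies $v=ju$ for some integer $j$, so $u$ is parallel to $v$ and equal to $(d/j)w$; such subgroups are totally ordered by divisibility with top element $\langle w\rangle$. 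Setting $H_{\max}=\langle w\rangle$ and pushing back to $G$ via the first step (if $\langle w\rangle\subseteq\langle u\rangle$ in $G$, then $u\in C$, so maximality in $C$ forces equality) yields both existence and uniqueness of the maximal infinite cyclic subgroup of $G$ containing $H$.

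The only nontrivial ingredient is the freeness and finite rank of $C_G(H)$, which is precisely Efrat's centralizer computation already invoked in the two propositions; everything else is formal. I therefore do not anticipate a real obstacle beyond recording carefully that maximality among infinite cyclic subgroups of $G$ and of $C$ coincide for subgroups containing $H$.
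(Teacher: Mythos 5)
Your proof is correct and takes essentially the same route as the paper's: both reduce the problem to the observation that any infinite cyclic subgroup containing $H$ must lie in the centralizer $C_G(H)$, which is finitely generated free abelian (via the computations behind Propositions \ref{mixedcom} and \ref{parabcom}), and then invoke the fact that a cyclic subgroup of a finitely generated free abelian group is contained in a unique maximal cyclic subgroup. The only difference is that you prove this last fact explicitly with the gcd/primitive-vector argument, whereas the paper simply asserts it.
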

\begin{proof}
Consider $H$ and $H'$ two infinite cyclic subgroup of $G$ such that $H\subseteq H'$. Then, it is clear that $H'$ is contained in $C_G(H)$ the centralizer of $H$ in $G$, which is a finitely generated free abelian group. Since every cyclic group contained in a finitely generated abelian group is contained in a unique maximal cyclic group, we have that $H$ is contained in a unique maximal infinite cyclic subgroup of $G$.
\end{proof}

\begin{theorem}\label{classifyinghilbert}
Let $G=PSL_2(\calO_k)$. Let $I$ be a complete set of representatives of conjugacy classes of maximal infinite cyclic subgroups of $G$. For every $H\in I$, choose models for $\underline{E}N_G(H)$ and $\underline{E}W_G(H)$, where $W_G(H)=N_G(H)/H$. Now consider the $G$-pushout:
	  $$ \xymatrix{ \coprod_{H\in I} G\times_{N_G(H)}\underline{E}N_G(H) \ar[r]^-i \ar[d]^{\coprod_{H\in I}Id_G\times_{N_G(H)}f_{H}} & \underline{E}G \ar[d] \\ \coprod_{H\in I}G\times_{N_G(H)} \underline{E}W_G(H) \ar[r] & X}$$
	  where $\underline{E}W_G(H)$ is viewed as an $N_G(H)$-CW-complex by restricting with the projection $N_G(H)\to W_G(H)$, the maps starting from the left upper corner are cellular and one of them is an inclusion of $G$-CW-complexes. Then $X$ is a model for $\underline{\underline{E}}G$.
\end{theorem}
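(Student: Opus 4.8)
The plan is to obtain this statement as a specialization of the L\"uck--Weiermann construction (Theorem \ref{luckweiermannthm}) by performing three identifications that turn the abstract push-out, indexed by commensuration classes of infinite virtually cyclic subgroups, into the concrete one stated here. First I would match the two index sets. Given an infinite virtually cyclic subgroup $V$, choose a finite-index infinite cyclic subgroup $C\le V$ and let $C_{max}$ be the unique maximal infinite cyclic subgroup containing it provided by Lemma \ref{HinHmax}; since $C\subseteq C_{max}\cap V$ is infinite, $C_{max}$ lies in the commensuration class $[V]$. One checks that the assignment $[V]\mapsto C_{max}$ does not depend on the choice of $C$ and that two maximal infinite cyclic subgroups in the same commensuration class must coincide (if $M_1\cap M_2$ is infinite then $M_1=(M_1\cap M_2)_{max}=M_2$ by the uniqueness in Lemma \ref{HinHmax}). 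Hence $[V]\mapsto C_{max}$ is a bijection from $[\inftyvcyc]$ onto the set of maximal infinite cyclic subgroups, and it is clearly equivariant for the conjugation action, so it descends to a bijection between the orbit set indexing Theorem \ref{luckweiermannthm} and the conjugacy classes of maximal infinite cyclic subgroups indexing the present statement. We may therefore take each representative $H\in I$ to be maximal infinite cyclic.

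Second, for such a maximal $H$ I claim $N_G[H]=N_G(H)$. The inclusion $N_G(H)\subseteq N_G[H]$ is immediate. Conversely, if $g\in N_G[H]$ then $g^{-1}Hg$ is again maximal infinite cyclic and commensurable with $H$, so by the uniqueness used above $g^{-1}Hg=H$, that is $g\in N_G(H)$. This matches the identity $N_G[H]\cong N_G(H)$ already recorded in Proposition \ref{mixedcom} and Proposition \ref{parabcom}, and it lets us replace $\underline{E}N_G[H]$ by $\underline{E}N_G(H)$ throughout the push-out.

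The main step, which I expect to be the real obstacle, is to show that $\underline{E}W_G(H)$, inflated to an $N_G(H)$-CW-complex along $\pi\colon N_G(H)\to W_G(H)=N_G(H)/H$, is a model for $E_{\vcyc[H]}N_G(H)$. Write $Y=\pi^*\underline{E}W_G(H)$. Since $N_G(H)$ acts on $\underline{E}W_G(H)$ through $\pi$, for every $K\le N_G(H)$ we have $Y^K=(\underline{E}W_G(H))^{\pi(K)}$, which is contractible when $\pi(K)$ is finite and empty when $\pi(K)$ is infinite. As $\ker\pi=H\cong\dbZ$, the group $\pi(K)\cong K/(K\cap H)$ is finite exactly when $K\cap H$ has finite index in $K$. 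Here the crucial elementary input is that an infinite subgroup of an infinite virtually cyclic group necessarily has finite index in it; using this, $\pi(K)$ is finite precisely when either $K$ is finite, or $K$ is infinite with $K\cap H$ infinite, i.e. precisely when $K\in(\fin\cap N_G(H))$ or $K\in\inftyvcyc$ with $K\in[H]$. These are exactly the members of $\vcyc[H]$, so $Y^K$ is contractible for $K\in\vcyc[H]$ and empty otherwise. Moreover each isotropy group of $Y$ has the form $\pi^{-1}(S)$ for a finite $S\le W_G(H)$, an extension of $S$ by $H$, hence infinite virtually cyclic and commensurable with $H$, so it lies in $\vcyc[H]$. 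Thus $Y$ is a model for $E_{\vcyc[H]}N_G(H)$. Substituting this model, together with the equalities $N_G[H]=N_G(H)$ and the index-set bijection, into the push-out of Theorem \ref{luckweiermannthm} produces exactly the push-out in the statement (the cellularity and inclusion conditions being inherited), whence $X$ is a model for $\underline{\underline{E}}G$.
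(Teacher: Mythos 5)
Your proposal is correct and follows the same skeleton as the paper's proof---both specialize Theorem \ref{luckweiermannthm} by (i) matching the index set with conjugacy classes of maximal infinite cyclic subgroups via Lemma \ref{HinHmax}, (ii) identifying $N_G[H]$ with $N_G(H)$, and (iii) identifying the inflation of $\underline{E}W_G(H)$ along $N_G(H)\to W_G(H)$ with a model for $E_{\vcyc[H]}N_G(H)$---but you supply genuinely different, and more self-contained, arguments for (ii) and (iii). The paper obtains (ii) by citing Propositions \ref{mixedcom} and \ref{parabcom}, i.e.\ by the geometric computation of the commensurators, whereas you derive it abstractly: conjugation preserves maximality, and two commensurable maximal infinite cyclic subgroups coincide by the uniqueness in Lemma \ref{HinHmax}. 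For (iii) the paper invokes the explicit structure of $N_G(H)$ (either $\dbZ^r$ or $\dbZ^r\rtimes\dbZ/2$) and declares the verification ``straightforward'', while your fixed-point and isotropy analysis of $\pi^*\underline{E}W_G(H)$ is structure-free: it works for any group in which every infinite cyclic subgroup lies in a unique maximal one, the only other input being that an infinite subgroup of an infinite virtually cyclic group has finite index. Your route buys generality and completeness (no dependence on the hyperbolic geometry beyond Lemma \ref{HinHmax}); the paper's is shorter because its structure results were already in place.

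One local imprecision you should repair: the claim that $\pi(K)$ is finite ``precisely when either $K$ is finite, or $K$ is infinite with $K\cap H$ infinite'' is false as stated---take $K=N_G(H)\cong\dbZ^r$ with $r\geq 2$; then $K\cap H=H$ is infinite, yet $\pi(K)\cong W_G(H)\cong\dbZ^{r-1}$ is infinite. The correct characterization is the one you write immediately afterwards: $\pi(K)$ is finite if and only if $K\in(\fin\cap N_G(H))$ or $K\in\inftyvcyc$ with $K\in[H]$; for the forward implication one should note that finiteness of $K/(K\cap H)$ forces $K$ to be virtually cyclic, since $K\cap H$ is a subgroup of $H\cong\dbZ$. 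This slip does not damage the proof: the direction you actually need (membership in $\vcyc[H]$ implies $\pi(K)$ finite, hence contractible fixed sets) is correctly derived from the elementary fact you cite, and emptiness of $Y^K$ for $K\notin\vcyc[H]$ already follows from your isotropy computation.
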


\begin{proof}
We will verify that all elements in Theorem \ref{luckweiermannthm} can be replaced by those appearing in the above push-out. In fact, since each infinite virtually cyclic subgroup of $G$ is commensurable with an infinite virtually cyclic subgroup of $G$, using the Lemma \ref{HinHmax} we conclude that the set of representatives $I$ in Theorem \ref{luckweiermannthm} coincides with the $I$ defined in the statement above. Now, if we take $H\in I$, from Propositions \ref{mixedcom} and \ref{parabcom}  we have that $N_G[H_{max}]=N_G(H_{max})$. The only thing left is to see that a model for $E_{\vcyc[H]}N_G(H)$ is also a model for $\underline{E} W_G(H)$. We know that $N_G(H)$ is isomorphic either to $\dbZ ^r$ or to $\dbZ^r\rtimes \dbZ/2$, since $H$ is maximal, we have that $W_G(H)$ is either isomorphic to $\dbZ ^{r-1}$ or to $\dbZ^{r-1}\rtimes \dbZ/2$ respectively. Anyway, checking the assertion in both cases is straightforward.
\end{proof}

Recall that the geometric dimension of a group $G$ respect to the family of virtually cyclic subgroups $\underline{\underline{gd}}(G)$ is defined as the minimum $n$ such that there exists an $n$-dimensional model for $\underline{\underline{E}}G$.

\begin{corollary}
Let $\Gamma=\hilbert$, and $G=\philbert$. Then $\underline{\underline{gd}}(G)=\underline{\underline{gd}}(\Gamma)\leq 2n$.
\end{corollary}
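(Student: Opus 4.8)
The plan is to prove the two assertions separately: first the upper bound $\underline{\underline{gd}}(G)\le 2n$ for $G=\philbert$, read off directly from the explicit $G$-pushout of Theorem \ref{classifyinghilbert}, and then the equality $\underline{\underline{gd}}(\Gamma)=\underline{\underline{gd}}(G)$, which I would deduce from the fact that the kernel of $p\colon\Gamma\to G$ is the central finite subgroup $N=\{I,-I\}\cong\dbZ/2$.

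For the upper bound I would feed good models into the pushout of Theorem \ref{classifyinghilbert}. As a model for $\underline{E}G$ take the $n$-fold product $\dbH^n$: it is a $CAT(0)$-space on which $G$ acts properly, so the fixed-point set of every finite subgroup is a nonempty convex, hence contractible, subset, and $\dbH^n$ is a model for $\underline{E}G$ of dimension $2n$. By Propositions \ref{mixedcom} and \ref{parabcom} each $N_G(H)$ is isomorphic to $\dbZ^r$ or $\dbZ^r\rtimes\dbZ/2$ with $r\le n$ and $\dbZ/2$ acting by $-1$; such a group acts properly and cocompactly on $\dbR^r$, which is therefore a model for $\underline{E}N_G(H)$ of dimension $r\le n$, and likewise $W_G(H)\cong\dbZ^{r-1}$ or $\dbZ^{r-1}\rtimes\dbZ/2$ admits the model $\dbR^{r-1}$ for $\underline{E}W_G(H)$ of dimension $r-1\le n-1$. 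The induced complexes $G\times_{N_G(H)}(-)$ have the same dimensions. Realizing the horizontal map $i$ as the inclusion of a $G$-CW-subcomplex via its mapping cylinder (which does not raise the dimension past $2n$, since $2n\ge n+1$), the pushout $X$ is obtained from $\underline{E}G$ by attaching cells of dimension at most $n-1$, so $\dim X\le\max\{2n,n-1\}=2n$ and $\underline{\underline{gd}}(G)\le 2n$.

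For the equality I would exploit the central extension $1\to N\to\Gamma\xrightarrow{p}G\to 1$ with $N$ finite. First note that a subgroup $H\le\Gamma$ is virtually cyclic if and only if $p(H)$ is, since $H\cap N$ is finite and virtual cyclicity is preserved both under taking quotients and under extensions with finite kernel; in other words the family $\vcyc$ of $\Gamma$ equals $p^{*}\vcyc$, the preimage of the family $\vcyc$ of $G$. Hence, if $X$ is any model for $\underline{\underline{E}}G$, then $X$ regarded as a $\Gamma$-CW-complex via $p$ satisfies $X^{H}=X^{p(H)}$, so it is a model for $E_{p^{*}\vcyc}\Gamma=\underline{\underline{E}}\Gamma$ of the same dimension, giving $\underline{\underline{gd}}(\Gamma)\le\underline{\underline{gd}}(G)$. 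For the reverse inequality let $Y$ be a minimal model for $\underline{\underline{E}}\Gamma$ and consider the fixed-point set $Y^{N}$. Since $N$ is central, $\Gamma$ preserves $Y^{N}$ and the action factors through $G$, so $Y^{N}$ is a $G$-CW-complex with $\dim Y^{N}\le\dim Y$. For $L\in\vcyc$ of $G$ set $H=p^{-1}(L)$, a virtually cyclic subgroup of $\Gamma$ containing $N$; then $(Y^{N})^{L}=Y^{H}$ is contractible, while the $G$-isotropy of a point $y\in Y^{N}$ is $p(\Gamma_y)$ with $\Gamma_y\in p^{*}\vcyc$, hence virtually cyclic. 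Thus $Y^{N}$ is a model for $\underline{\underline{E}}G$ and $\underline{\underline{gd}}(G)\le\dim Y=\underline{\underline{gd}}(\Gamma)$. Combining the two inequalities gives the claimed equality, and with the bound of the previous paragraph we obtain $\underline{\underline{gd}}(G)=\underline{\underline{gd}}(\Gamma)\le 2n$.

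The dimension bookkeeping in the pushout is routine; the point that needs genuine care is the reverse inequality $\underline{\underline{gd}}(G)\le\underline{\underline{gd}}(\Gamma)$, where one must verify that passing to $N$-fixed points really yields a classifying space for the virtually cyclic family of the quotient, and not for some larger or smaller family. This is exactly where the centrality and finiteness of $N=\{I,-I\}$ enter, through the identity $(Y^{N})^{L}=Y^{p^{-1}(L)}$ together with the equality of families $\vcyc=p^{*}\vcyc$.
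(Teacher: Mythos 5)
Your proof is correct, and its first half is exactly the paper's argument: the upper bound $\underline{\underline{gd}}(G)\le 2n$ is read off from the pushout of Theorem \ref{classifyinghilbert} with the model $\dbH^n$ for $\underline{E}G$ and Euclidean models for $\underline{E}N_G(H)$ and $\underline{E}W_G(H)$ coming from Propositions \ref{mixedcom} and \ref{parabcom} (the paper quotes $\dbR^n$ for both, you use $\dbR^r$ and $\dbR^{r-1}$; either works, and your mapping-cylinder bookkeeping giving $\dim X\le\max\{2n,n+1\}=2n$ is the point the paper leaves implicit). Where you genuinely differ is in the equality $\underline{\underline{gd}}(G)=\underline{\underline{gd}}(\Gamma)$. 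The paper disposes of it in one line: since the kernel of $p\colon\Gamma\to G$ is finite, $p^*\underline{\underline{E}}G=\underline{\underline{E}}\Gamma$, ``therefore'' the dimensions agree. Read literally, that identity only yields $\underline{\underline{gd}}(\Gamma)\le\underline{\underline{gd}}(G)$, since pulling back a minimal model for $\underline{\underline{E}}G$ produces a model for $\underline{\underline{E}}\Gamma$ of the same dimension, but it says nothing about the reverse inequality. You supply precisely that missing direction: taking $N$-fixed points of a minimal model $Y$ for $\underline{\underline{E}}\Gamma$, where $N=\{\pm I\}$ is central and finite, and verifying that $Y^{N}$ with its induced $G$-CW-structure has virtually cyclic isotropy and contractible fixed sets via $(Y^{N})^{L}=Y^{p^{-1}(L)}$ and the equality of families $\vcyc(\Gamma)=p^{*}\vcyc(G)$. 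This is the standard fixed-point argument for quotients by finite normal subgroups; it costs you a paragraph, but it makes the equality an actual theorem of your write-up rather than an assertion, which is a genuine improvement on the paper's proof.
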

\begin{proof}
Since Since $\Gamma$ is an extension of $G$ by a finite group it is clear that  $p^*\underline{\underline{E}}G=\underline{\underline{E}}\Gamma$, therefore $\underline{\underline{gd}}(G)=\underline{\underline{gd}}(\Gamma)$.
A model for $\underline{E}G$ is given by the $n$-fold product $\dbH\times \cdots \times \dbH$ (see \cite[Remark 4.2]{BSS}), while models for $\underline{E}N_G(H)$ and $\underline{E}W_G(H)$ are given by $\dbR^n$. Now the result follows from the pushout in the previous theorem.
\end{proof}

\begin{corollary}
Let $G=\philbert$ and let $R$ be a ring. Then, for every $q\in \dbZ$,
\begin{align*}
    H_q^G(\underline{E}G\to \underline{\underline{E}}G;\KR) &\cong H_q^G( \coprod_{H\in I} G\times_{N_G(H)}\underline{E}N_G(H) \to \coprod_{H\in I} G\times_{W_G(H)}\underline{E}W_G(H);\KR)\\
                &\cong \bigoplus\limits_{H\in I}H_q^{N_G(H)}(\underline{E}N_G(H)\to \underline{E}W_G(H);\KR)
\end{align*}
where $I$ is a set of representatives of conjugacy classes of maximal infinite cyclic subgroups.
\end{corollary}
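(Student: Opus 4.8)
The plan is to establish the two isomorphisms separately: the first by excision applied to the L\"uck-Weiermann pushout of Theorem \ref{classifyinghilbert}, and the second by combining the disjoint union axiom with the induction structure of $H_*^G(-;\KR)$ recorded in Section 3.

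For the first isomorphism, write $A=\coprod_{H\in I}G\times_{N_G(H)}\underline{E}N_G(H)$ and $C=\coprod_{H\in I}G\times_{N_G(H)}\underline{E}W_G(H)$, so that Theorem \ref{classifyinghilbert} exhibits $\underline{\underline{E}}G$, up to $G$-homotopy, as the $G$-pushout of $C\xleftarrow{g}A\xrightarrow{i}\underline{E}G$. Because the relative groups $H_q^G(f\colon X\to Y;\KR)$ depend only on the $G$-homotopy class of $f$, I may assume that $i$ is an inclusion of $G$-CW-complexes (if instead $g$ is the inclusion, replace $i$ by the inclusion of $A$ into its mapping cylinder), hence a $G$-cofibration. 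Then $C\to\underline{\underline{E}}G$ is also a $G$-cofibration and there is a $G$-homeomorphism $\underline{\underline{E}}G/\underline{E}G\cong C/A$. Since $H_*^G(-;\KR)$ satisfies the equivariant Eilenberg-Steenrod axioms, excision gives $H_q^G(\underline{\underline{E}}G,\underline{E}G;\KR)\cong H_q^G(C,A;\KR)$; rewriting both sides through mapping cylinders, as in the definition preceding \eqref{fexactseq}, this is precisely the first claimed isomorphism.

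For the second isomorphism I first invoke additivity. The relative term $H_q^G(A\to C;\KR)$ is computed from the pair $(M_g,A)$, which is the disjoint union over $H\in I$ of the pairs $\big(G\times_{N_G(H)}M_{f_H},\,G\times_{N_G(H)}\underline{E}N_G(H)\big)$, where $f_H\colon\underline{E}N_G(H)\to\underline{E}W_G(H)$; here I use that induction $G\times_{N_G(H)}(-)$ preserves colimits and so commutes with the formation of mapping cylinders. The disjoint union axiom then yields $H_q^G(A\to C;\KR)\cong\bigoplus_{H\in I}H_q^G\big(G\times_{N_G(H)}\underline{E}N_G(H)\to G\times_{N_G(H)}\underline{E}W_G(H);\KR\big)$, where, since $I$ may be infinite, the full additivity axiom is needed. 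Finally, for each $H\in I$ I apply the induction structure to the inclusion $\alpha_H\colon N_G(H)\hookrightarrow G$: as $\alpha_H$ is injective, its kernel is trivial and acts freely, so the induction isomorphism applies to the $N_G(H)$-CW-pair $(M_{f_H},\underline{E}N_G(H))$ and identifies each summand with $H_q^{N_G(H)}(\underline{E}N_G(H)\to\underline{E}W_G(H);\KR)$.

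I expect the first isomorphism to be the only step demanding genuine care, since it relies on the L\"uck-Weiermann square being an honest excision pushout; this is exactly where the hypothesis that one of the two legs is an inclusion of $G$-CW-complexes enters. The second isomorphism is then formal, resting only on the two structural properties of $H_*^G(-;\KR)$ from Section 3, namely additivity and the induction structure, the sole verification being the compatibility of induction with the passage to mapping cylinders, which holds because $G\times_{N_G(H)}(-)$ is a left adjoint.
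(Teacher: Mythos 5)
Your second isomorphism is handled correctly, and along essentially the same lines as the paper: the paper deduces it from the induction isomorphisms on the absolute groups together with a five lemma argument, while you apply the induction structure directly to the pair $(M_{f_H},\underline{E}N_G(H))$ after noting that $G\times_{N_G(H)}(-)$, being a left adjoint, commutes with mapping cylinders; both routes are fine, and your explicit appeal to additivity over the possibly infinite index set $I$ is a point the paper leaves implicit.

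The first isomorphism, however, contains a genuine gap. Writing $A$ and $C$ for the two coproducts as in your proof, your reduction only makes $i$ an inclusion; the other leg $g=\coprod_{H\in I}Id_G\times_{N_G(H)}f_H$ remains merely cellular, hence possibly non-injective, and then the induced map $\underline{E}G\to\underline{\underline{E}}G$ is also non-injective in general. So the pairs $(\underline{\underline{E}}G,\underline{E}G)$ and $(C,A)$ must be read as pairs of images, $(\underline{\underline{E}}G,\operatorname{im}\underline{E}G)$ and $(C,g(A))$, and your last step --- ``rewriting both sides through mapping cylinders'' --- asserts that these strict pair groups agree with the mapping-cylinder relative groups $H_q^G(\underline{E}G\to\underline{\underline{E}}G;\KR)$ and $H_q^G(A\to C;\KR)$. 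That assertion is false for non-injective cellular maps: the strict cofiber and the mapping cone can have different homology. For instance, take the square whose upper-left corner is $S^1\sqcup S^1$, with $i$ the identity and $g$ the fold map onto $S^1$ (all groups trivial); the pushout is $S^1$, both strict pair groups vanish, yet both mapping-cylinder relative groups equal $\widetilde{H}_*(S^2;\KR)\neq 0$. (The two sides of the corollary still agree in this toy example, as they must, but neither agrees with your intermediate terms.) The missing idea is exactly what the paper's proof records: since one leg of the square is a $G$-cofibration, the square is a homotopy pushout, and therefore the homotopy cofibers --- not the strict quotients --- of the two vertical maps are $G$-homotopy equivalent; since the relative groups in the corollary are by definition homology groups of mapping cylinders relative to their source, this yields the first isomorphism directly. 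Alternatively, you can repair your excision argument by also replacing $g$ with the inclusion $A\hookrightarrow M_g$: the resulting pushout is again a model for $\underline{\underline{E}}G$ (by the gluing lemma, or by uniqueness of models together with Theorem \ref{luckweiermannthm}), and once both legs are cofibrations the strict quotients and homotopy cofibers agree, so your chain of identifications becomes valid.
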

\begin{proof}
   Using the $G$-pushout from the previous theorem, we have the first isomorphism. In fact, since we are assuming  that the map $i$ is an inclusion of a subcomplex, the space $X$ is actually the homotopy pushout, and then the homotopy cofibers of the two vertical rows are $G$-homotopically equivalent. Now, the second isomorphism follows from the induction structure of the equivariant homology theory, because 
 we have $H_q^G(G\times_{N_G(H)}\underline{E}N_G(H);\KR)\cong H_q^{N_G(H)}(\underline{E}N_G(H);\KR)$ and $H_q^G(G\times_{N_G(H)}\underline{E}W_G(H);\KR)\cong H_q^{N_G(H)}(\underline{E}W_G(H);\KR)$, and a five lemma argument.
\end{proof}


Once we have analyzed the commensurators of infinite cyclic subgroups of $G=\philbert$ and the classifying space $\underline{\underline{E}}G$, we are going to do so for $\Gamma=\hilbert$. For this we recall that we are denoting by $p:\Gamma \to G$ the canonical projection. Then, we have the following short exact sequence $$1\to Z\to \Gamma \xrightarrow{p} G\to 1,$$ where $Z=\left\{\pm\begin{pmatrix}
1 & 0\\
0 & 1
\end{pmatrix}\right\} \cong \dbZ/2$ is the center of $\Gamma$. We are going to use this notation for the rest of the paper. Note that we have a classification of elements in $\Gamma$ analogue to that in $G$, i.e. the elements in $\Gamma$ are either elliptic, parabolic or hyperbolic. 

\begin{lemma}\label{lemmacentranormGamma}
Consider $A\in \Gamma=\hilbert$ a hyperbolic or parabolic element. If $B$ is such that $p(B)p(A)p(B^{-1})=p(A^{\pm1})$, then $B A B^{-1}=A^{\pm1}$.
\end{lemma}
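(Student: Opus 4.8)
The plan is to lift the given relation from $G=\philbert$ to $\Gamma=\hilbert$ and then resolve the resulting sign ambiguity by a trace computation. Since $\ker p = Z = \{\pm I\}$, the hypothesis $p(B)p(A)p(B^{-1})=p(A^{\pm1})$ is equivalent to an identity of the form $BAB^{-1}=\epsilon A^{\delta}$ in $\Gamma$, for some $\epsilon\in\{+1,-1\}$ and some $\delta\in\{+1,-1\}$. The assertion of the lemma is exactly that the sign $\epsilon$ must be $+1$; equivalently, that the lift of the $PSL_2$-relation to $SL_2$ carries no extra factor of $-I$.

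To pin down $\epsilon$ I would take traces on both sides of $BAB^{-1}=\epsilon A^{\delta}$. Conjugation invariance gives $Tr(BAB^{-1})=Tr(A)$, and, since $A^{-1}$ is the adjugate of $A$ when $\det A = 1$, one has $Tr(A^{-1})=Tr(A)$; hence $Tr(A^{\delta})=Tr(A)$ irrespective of whether $\delta=+1$ or $\delta=-1$. Combining the two computations yields $Tr(A)=\epsilon\,Tr(A)$, so as soon as $Tr(A)\neq 0$ we may cancel and conclude $\epsilon=+1$, i.e.\ $BAB^{-1}=A^{\pm1}$.

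It remains to check that $Tr(A)\neq 0$, and this is the only place where the hypothesis that $A$ is hyperbolic or parabolic is used. Here $Tr(A)\in\calO_k$, and for each embedding $\sigma_i$ of $k$ into $\dbR$ we have $\sigma_i(Tr(A))=Tr(\sigma_i(A))$; since an element of $\calO_k$ vanishes precisely when all of its embeddings do, it suffices to produce a single component with nonzero trace. If $A$ is parabolic then it is totally parabolic, so $Tr(\sigma_i(A))^2=4$ and $Tr(\sigma_i(A))=\pm 2\neq 0$ for every $i$; if $A$ is hyperbolic then at least one component satisfies $Tr(\sigma_i(A))^2>4$, forcing that component's trace to be nonzero. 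Either way $Tr(A)\neq 0$ in $\calO_k$, completing the argument.

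The only step requiring genuine care is this last verification that a hyperbolic or parabolic element has nonzero trace; the rest is formal. It is worth emphasizing that this is precisely why the statement excludes (totally) elliptic elements: an elliptic element can have vanishing trace --- for instance an element of order four --- in which case $Tr(A)=\epsilon\,Tr(A)$ imposes no constraint on $\epsilon$, the trace argument breaks down, and the conclusion $BAB^{-1}=A^{\pm1}$ can genuinely fail.
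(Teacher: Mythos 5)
Your proof is correct and follows essentially the same route as the paper: lift the relation through $p$ to get $BAB^{-1}=\pm A^{\pm1}$, take traces using $Tr(A^{-1})=Tr(A)$ for determinant-one matrices, and rule out the minus sign because a hyperbolic or parabolic element has nonzero trace. Your extra care in verifying $Tr(A)\neq 0$ via the embeddings of $\calO_k$, and your closing remark on why elliptic (e.g.\ order-four) elements must be excluded, are elaborations of the same argument rather than a different approach.
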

\begin{proof}
Since $p$ is a homomorphism of groups we have that $p(BA B^{-1})=p(A^{\pm1})$, hence $B AB^{-1}=\pm A^{\pm1}$. On the other hand, taking traces in the later equation we get $Tr(A)=Tr(B A B^{-1})=\pm Tr(A^{\pm1})=\pm Tr(A)$. If $B A B^{-1}=- A^{\pm1}$ we conclude that $Tr(A)=0$, contradicting the fact that $A$ is hyperbolic or parabolic. Now, the result follows.
\end{proof}

As an immediate consequence we have the following.

\begin{proposition}\label{normalizersl}
Consider $A \in \Gamma=\hilbert$ a hyperbolic or parabolic element, and let $C$ be the infinite cyclic subgroup generated by $A$. Then
\begin{enumerate}
\item $C_\Gamma(C) = p^{-1}(C_G(p(C)))$, and
\item $N_\Gamma (C) = p^{-1} (N_G (p(C)))$.
\end{enumerate}
\end{proposition}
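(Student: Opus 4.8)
The plan is to deduce the two equalities of subgroups directly from Lemma \ref{lemmacentranormGamma}, exploiting the short exact sequence $1\to Z\to \Gamma\xrightarrow{p} G\to 1$ with $Z\cong\dbZ/2$ central. Throughout I write $C=\langle A\rangle$ and note that $p(C)$ is the cyclic subgroup of $G$ generated by $p(A)$, which is infinite since $A$ is hyperbolic or parabolic (in particular $A$ has infinite order and $A\notin Z$).

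For part (1), I would prove the two inclusions separately. The inclusion $C_\Gamma(C)\subseteq p^{-1}(C_G(p(C)))$ is immediate: if $B$ centralizes $A$ then $p(B)$ centralizes $p(A)$ (apply $p$ to $BAB^{-1}=A$), so $p(B)\in C_G(p(C))$, i.e. $B\in p^{-1}(C_G(p(C)))$. For the reverse inclusion, suppose $B\in p^{-1}(C_G(p(C)))$, so $p(B)p(A)p(B^{-1})=p(A)$. This is the hypothesis of Lemma \ref{lemmacentranormGamma} with the ``$+$'' sign, which yields $BAB^{-1}=A^{+1}=A$, hence $B\in C_\Gamma(C)$. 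Part (2) follows the same template. If $B\in N_\Gamma(C)$ then $BAB^{-1}\in C$, and since $A$ has infinite order any element of $C$ conjugate to $A$ must be $A^{\pm1}$; applying $p$ gives $p(B)p(A)p(B^{-1})=p(A)^{\pm1}$, so $p(B)\in N_G(p(C))$. Conversely, if $p(B)\in N_G(p(C))$ then $p(B)p(A)p(B^{-1})=p(A^{\pm1})$ for some choice of sign, and Lemma \ref{lemmacentranormGamma} delivers exactly $BAB^{-1}=A^{\pm1}$, so $B$ normalizes $C$.

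The genuine content — and the only step that is not formal bookkeeping with $p$ — is the reverse inclusion in each case, where one must upgrade a relation that holds in $G$ (i.e. modulo the central sign $\pm I$) to the corresponding relation in $\Gamma$. That is precisely what Lemma \ref{lemmacentranormGamma} provides, by using the trace argument to rule out the spurious sign: if $BAB^{-1}=-A^{\pm1}$ then $\mathrm{Tr}(A)=0$, contradicting that $A$ is hyperbolic ($\mathrm{Tr}(A)^2>4$) or parabolic ($\mathrm{Tr}(A)^2=4$). So the main (indeed the whole) obstacle has already been isolated into the preceding lemma, and the proposition is its clean corollary; the remark in the excerpt that this proposition is ``an immediate consequence'' is justified precisely because Lemma \ref{lemmacentranormGamma} handles the sign ambiguity. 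I would therefore keep the proof short, remarking only that both statements follow by applying $p$ in one direction and Lemma \ref{lemmacentranormGamma} in the other, with the elliptic case excluded exactly because it is the case where the trace can vanish.
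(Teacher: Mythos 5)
Your proof is correct and is exactly the argument the paper intends: the paper states this proposition as an immediate consequence of Lemma \ref{lemmacentranormGamma} without further proof, and your write-up simply makes explicit the formal direction (apply $p$) and the substantive direction (use the lemma's trace argument to kill the sign ambiguity, which is also why elliptic elements are excluded). Nothing is missing.
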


\begin{lemma}\label{centralizergamma}
Consider $A \in \Gamma=\hilbert$ a hyperbolic or parabolic element, and let $C$ be the infinite cyclic subgroup generated by $A$. Then $C_\Gamma(C)\cong \dbZ/2 \oplus C_G(p(C))$.
\end{lemma}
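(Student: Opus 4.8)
The plan is to realize $C_\Gamma(C)$ as a central extension of the free abelian group $C_G(p(C))$ by $Z\cong\dbZ/2$, to show this extension is abelian, and then to split it by elementary homological algebra.

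First I would record that, by part (1) of Proposition \ref{normalizersl}, we have $C_\Gamma(C)=p^{-1}(C_G(p(C)))$. Since $Z$ is the center of $\Gamma$ it is contained in $C_\Gamma(C)$, and $p$ restricts to a surjection $C_\Gamma(C)\to C_G(p(C))$ with kernel $Z$; hence we obtain a central extension
$$1\to Z\to C_\Gamma(C)\xrightarrow{p} C_G(p(C))\to 1,$$
with $Z\cong\dbZ/2$. By Propositions \ref{mixedcom} and \ref{parabcom} the quotient $C_G(p(C))$ is a free abelian group of finite rank (this is precisely where the hypothesis that $A$ is hyperbolic or parabolic, hence of infinite order and not totally elliptic, is used); in particular $C_\Gamma(C)$ is finitely generated.

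The key step is to prove that $C_\Gamma(C)$ is abelian. Since $A$ has infinite order we have $A\neq\pm I$, so $A$ is not a scalar matrix; therefore $\{I,A\}$ is $k$-linearly independent and the centralizer of $A$ in the matrix algebra $M_2(k)$ is exactly the commutative subalgebra $k[A]=kI+kA$. Every element of $C_\Gamma(C)$ commutes with $A$ and thus lies in $k[A]$; as $k[A]$ is commutative, any two elements of $C_\Gamma(C)$ commute, so $C_\Gamma(C)$ is abelian. (Alternatively, one can argue place by place: the embedding $\sigma$ is injective and no component $\sigma_i(A)$ can equal $\pm I$, while the centralizer of a non-central element of $SL_2(\dbR)$ is an abelian one-parameter subgroup.)

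Finally, $C_\Gamma(C)$ is a finitely generated abelian group whose quotient by its torsion subgroup is $C_G(p(C))\cong\dbZ^r$, because $C_G(p(C))$ is torsion free; hence the torsion subgroup of $C_\Gamma(C)$ coincides with $Z\cong\dbZ/2$, and the structure theorem for finitely generated abelian groups (equivalently, the vanishing of $\mathrm{Ext}^1_{\dbZ}(\dbZ^r,\dbZ/2)$) yields $C_\Gamma(C)\cong\dbZ/2\oplus C_G(p(C))$. The only genuine content is the abelianness of $C_\Gamma(C)$; once that is in hand the splitting is purely formal.
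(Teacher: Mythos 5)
Your proof is correct, and its skeleton is the same as the paper's: both extract the central extension $1\to Z\to C_\Gamma(C)\to C_G(p(C))\to 1$ from Proposition \ref{normalizersl}(1), observe that $C_\Gamma(C)$ is abelian and that $C_G(p(C))$ is free abelian, and conclude the splitting. The one genuine difference is the abelianness step: the paper simply cites \cite[Lemma 2.2 p.~83]{Fr90}, whereas you prove it directly, noting that an infinite-order $A$ cannot be $\pm I$, hence is non-scalar, so its centralizer in $M_2(k)$ is the commutative algebra $k[A]=kI+kA$ (the standard fact about non-derogatory $2\times 2$ matrices), and $C_\Gamma(C)$ sits inside it. This argument is valid and is arguably an improvement: it makes the lemma self-contained and pinpoints exactly where the hypothesis that $A$ is hyperbolic or parabolic (i.e.\ of infinite order) is used, at the cost of a few extra lines; the paper's citation is shorter but outsources the key content. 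Your final splitting step, via the identification of the torsion subgroup with $Z$ and $\mathrm{Ext}^1_{\dbZ}(\dbZ^r,\dbZ/2)=0$, is just a more explicit rendering of the paper's one-line conclusion and is equally correct.
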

\begin{proof}
From the previous proposition we have the short exact sequence $$1\to Z\to C_\Gamma(C) \to C_G(p(C)) \to 1.$$
Since $C_\Gamma(C)$ is abelian (see \cite[Lemma 2.2 p. 83]{Fr90}) and $C_G(C)$ is free abelian we have that the short exact sequence splits and the result follows.
\end{proof}

\begin{proposition}
Consider $\alpha \in \Gamma=\hilbert$ an infinite order element, and let $C$ be the infinite cyclic subgroup generated by $\alpha$. Then,
$$N_\Gamma(C)\cong \begin{cases} 

 \dbZ^{r}\oplus \dbZ/2 & \text{if $N_G(p(C))\cong \dbZ^r$, and}\\
 \dbZ^r\rtimes \dbZ/4 & \text{if $N_G(p(C))\cong \dbZ^r\rtimes \dbZ/2$.}
\end{cases}$$
\end{proposition}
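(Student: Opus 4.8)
The plan is to analyze the central extension coming from Proposition \ref{normalizersl}, namely the short exact sequence $1\to Z\to N_\Gamma(C)\to N_G(p(C))\to 1$ with $Z=\{\pm I\}\cong\dbZ/2$ central, and to resolve it in each of the two cases by combining Lemma \ref{centralizergamma} with an order computation for a lift of the order-two element detected in the proof of Proposition \ref{mixedcom}. First I would dispose of the case $N_G(p(C))\cong\dbZ^r$. Here $N_G(p(C))=C_G(p(C))$, so $N_\Gamma(C)=p^{-1}(N_G(p(C)))=p^{-1}(C_G(p(C)))=C_\Gamma(C)$ by Proposition \ref{normalizersl}, and Lemma \ref{centralizergamma} gives $N_\Gamma(C)\cong \dbZ/2\oplus\dbZ^r$ at once.

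For the case $N_G(p(C))\cong \dbZ^r\rtimes\dbZ/2$, recall from the proof of Proposition \ref{mixedcom} that the nontrivial factor is generated by an order-two element $\bar\beta$ acting on $C_G(p(C))\cong\dbZ^r$ by $-1$. Choose a lift $\beta\in N_\Gamma(C)$ of $\bar\beta$ (it lies in $N_\Gamma(C)=p^{-1}(N_G(p(C)))$). The key local computation is that $\beta$ has order four with $\beta^2=-I$. Indeed, $p(\beta^2)=\bar\beta^2=1$ forces $\beta^2\in Z$, and by Cayley--Hamilton $\beta^2=Tr(\beta)\beta-I$; thus $\beta^2=I$ would force either $Tr(\beta)=0$, whence $\beta^2=-I$, a contradiction, or $\beta$ scalar, i.e.\ $\beta=\pm I$ and $\bar\beta=1$, again a contradiction. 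Hence $\beta^2=-I$.

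Next I would show that $\beta$ inverts all of $C_\Gamma(C)$. For an infinite-order $x\in C_\Gamma(C)$ we have $p(\beta x\beta^{-1})=\bar\beta\,p(x)\,\bar\beta^{-1}=p(x)^{-1}=p(x^{-1})$, and since such an $x$ is non-elliptic its trace is nonzero, so the argument of Lemma \ref{lemmacentranormGamma} upgrades this to the exact equality $\beta x\beta^{-1}=x^{-1}$, with no spurious factor of $-I$; the central involution $-I$ is inverted trivially, so $\beta y\beta^{-1}=y^{-1}$ for every $y\in C_\Gamma(C)$. Now I assemble the semidirect product. By Lemma \ref{centralizergamma}, $C_\Gamma(C)\cong \dbZ/2\oplus\dbZ^r$ with torsion subgroup exactly $Z=\langle -I\rangle=\langle\beta^2\rangle$; choose a free complement $M\cong\dbZ^r$. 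Since $\beta$ inverts $C_\Gamma(C)$, we have $\beta M\beta^{-1}=M$, so $M$ is $\beta$-invariant and, $C_\Gamma(C)$ being abelian, normal in $N_\Gamma(C)=\langle C_\Gamma(C),\beta\rangle$. As $M$ is torsion-free, $M\cap\langle\beta\rangle=1$, and $M\langle\beta\rangle$ contains $\beta^2=-I$, hence contains $M\cdot Z=C_\Gamma(C)$ and therefore all of $N_\Gamma(C)$. Thus $N_\Gamma(C)=M\rtimes\langle\beta\rangle\cong \dbZ^r\rtimes\dbZ/4$, with the generator of $\dbZ/4$ acting by $-1$ through its quotient $\dbZ/2$.

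The main obstacle is the step establishing $\beta^2=-I$ rather than $\beta^2=I$: this is precisely what promotes the quotient $\dbZ/2$ to a $\dbZ/4$ and is responsible for the two distinct answers. The remaining delicate point is guaranteeing that $\beta$ inverts $C_\Gamma(C)$ on the nose, not merely up to multiplication by $-I$; this is where Lemma \ref{lemmacentranormGamma} and the non-vanishing of the traces of infinite-order elements are essential, since without them one could not exclude a genuinely different extension of $\dbZ^r\rtimes\dbZ/2$ by $Z$.
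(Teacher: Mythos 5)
Your proof is correct, and its skeleton matches the paper's: both cases rest on Proposition \ref{normalizersl} and Lemma \ref{centralizergamma}, and the free abelian case is handled identically. Where you genuinely diverge is in the mechanism for the crucial step. The paper produces an order-four element abstractly: the preimage under $p$ of an order-two subgroup of $N_G(p(C))$ is a finite subgroup of order four, hence cyclic because every finite subgroup of $SL_2(\dbR)$ is cyclic. Your Cayley--Hamilton computation ($\beta^2 = Tr(\beta)\beta - I$, so $\beta^2=I$ would force $Tr(\beta)=0$ or $\beta=\pm I$) is more elementary and shows slightly more, namely that \emph{every} lift of $\bar\beta$ squares to $-I$. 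More significantly, you verify that $\beta$ inverts $C_\Gamma(C)$ exactly, and this step is genuinely needed: the paper asserts that the image $M$ of the splitting $C_G(p(C))\to C_\Gamma(C)$ is a normal free abelian subgroup of index four in $N_\Gamma(C)$, but a free complement of the torsion in $C_\Gamma(C)\cong \dbZ/2\oplus\dbZ^r$ is not characteristic, so its normality really depends on the no-sign-flip statement of Lemma \ref{lemmacentranormGamma} (equivalently, on your inversion argument, since inversion preserves every subgroup). Your write-up makes this explicit and also pins down the $\dbZ/4$-action as inversion through its $\dbZ/2$-quotient, which is what the paper uses later (e.g.\ in Theorem \ref{classifyinghilbert2} and Lemma \ref{lemmaLR1}). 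Two cosmetic repairs: first, Proposition \ref{normalizersl} and Lemma \ref{lemmacentranormGamma} are stated only for hyperbolic or parabolic elements, so, as the paper does, you should either permute the embeddings of $k$ into $\dbR$ to make a mixed generator hyperbolic, or observe that the only input used is $Tr\neq 0$; second, your justification that an infinite-order $x\in C_\Gamma(C)$ has nonzero trace should not be that $x$ is ``non-elliptic'' (a mixed $x$ can have elliptic first component) but rather that $Tr(x)=0$ would force $x^2=-I$ by Cayley--Hamilton, contradicting infinite order---an argument you already use elsewhere.
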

\begin{proof}
First, we can assume that $A$ is either hyperbolic or parabolic up to changing the embeding of $k$ in $\dbR$. Now, from Proposition \ref{normalizersl} we obtain the following commutative diagram
$$\xymatrix{
 & Z\ar@{^{(}->}[d] \ar@{=}[r] & Z\ar@{^{(}->}[d] &  & \\
1\ar[r]& C_\Gamma(C) \ar[d]\ar[r] & N_\Gamma (C) \ar[d] \ar[r] & F \ar@{=}[d] \ar[r] & 1\\
1\ar[r]& C_G(p(C)) \ar@/^/[u]\ar[r] \ar[d] & N_G(p(C))\ar[r] \ar[d] & F\ar[r] & 1\\
 & 1 & 1 & & 
},$$
where $F$ is a subgroup of $\dbZ/2$, the middle vertical arrows are the restriction of $p: \Gamma \to G$, and the curved arrow is the splitting described in the proof of Lemma \ref{centralizergamma}. In the first case when $N_G(H)$ is a free abelian group we have that $F$ is a trivial group, then $N_\Gamma(C)\cong C_\Gamma (C)$ and the result follows from Lemma \ref{centralizergamma}. In the other case $F=\dbZ/2$ and the splitting $C_G(p(C)) \to C_\Gamma(C)$ in the diagram let us conclude that $N_\Gamma(C)$ has a finitely generated free abelian normal subgroup of index four. In order to finish the proof it is enough to show that $N_\Gamma(C)$ has an element of order four. In fact, it is well known that every finite subgroup of $SL_2(\dbR)$ is cyclic, then the preimage under $p$ of any order two subgroup of $N_G(p(C))$ is an order four cyclic subgroup of $N_\Gamma(C)$.
\end{proof}

We can finally construct a model for $\underline{\underline{E}}\Gamma$.

\begin{theorem}\label{classifyinghilbert2}
Let $\Gamma=SL_2(\calO_k)$. Let $\calI $ be defined by taking, for each $H\in I$ from Theorem \ref{classifyinghilbert}, an infinite cyclic subgroup $C$ of $\Gamma$ such that $p(C)=H$, with $p:\Gamma \to G$ the canonical projection. For every $C\in \calI$, choose models for $\underline{E}N_\Gamma(C)$ and $\underline{E}W_\Gamma(C)$, where $W_\Gamma(C)=N_\Gamma(C)/C$. Now consider the $\Gamma$-pushout:
	  $$ \xymatrix{ \coprod_{C\in \calI} \Gamma\times_{N_\Gamma(C)}\underline{E}N_\Gamma(C) \ar[r]^-i \ar[d]^{\coprod_{C\in \calI}Id_\Gamma\times_{N_\Gamma(C)}f_{C}} & \underline{E}\Gamma \ar[d] \\ \coprod_{C\in \calI}\Gamma\times_{N_\Gamma(C)} \underline{E}W_\Gamma(C) \ar[r] & X}$$
	  where $\underline{E}W_\Gamma(C)$ is viewed as an $N_\Gamma(C)$-CW-complex by restricting with the projection $N_\Gamma(C)\to W_\Gamma(C)$, the maps starting from the left upper corner are cellular and one of them is an inclusion of $\Gamma$-CW-complexes. Then $X$ is a model for $\underline{\underline{E}}\Gamma$.
\end{theorem}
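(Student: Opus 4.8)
The plan is to verify, exactly as in the proof of Theorem \ref{classifyinghilbert}, that every ingredient of the L\"uck--Weiermann push-out of Theorem \ref{luckweiermannthm} applied to $\Gamma$ may be replaced by the corresponding object displayed in the statement. Throughout I use the central extension $1\to Z\to\Gamma\xrightarrow{p}G\to 1$ with $Z\cong\dbZ/2$.

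First I would check that $\calI$ is a complete, irredundant set of representatives of the $\Gamma$-conjugacy orbits of commensurability classes of infinite virtually cyclic subgroups of $\Gamma$. Because $\ker p=Z$ is finite, a subgroup $V\le\Gamma$ is infinite virtually cyclic if and only if $p(V)$ is, and the inclusion $V_1\cap V_2\le (V_1Z)\cap(V_2Z)=p^{-1}(p(V_1)\cap p(V_2))$ of index at most $4$ shows that $V_1\cap V_2$ is infinite if and only if $p(V_1)\cap p(V_2)$ is. Hence $V\mapsto p(V)$ induces a bijection between the two commensurability-class sets that is equivariant for the conjugation actions through the surjection $p$, and therefore descends to a bijection of orbit sets. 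Since any two infinite cyclic lifts of a given $H\in I$ lie in $p^{-1}(H)\cong\dbZ/2\times\dbZ$ and hence are commensurable, the class of $C$ depends only on $H$, so the $C\in\calI$ provide exactly one representative in each $\Gamma$-orbit.

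Next I would identify the groups attached by the construction. Commensurability is preserved and reflected by $p$, so $N_\Gamma[C]=p^{-1}(N_G[H])$ where $H=p(C)$; as $H$ is a maximal infinite cyclic subgroup of $G$, Propositions \ref{mixedcom} and \ref{parabcom} give $N_G[H]=N_G(H)$. On the other hand every infinite-order element of $\Gamma$ has nonzero trace (a trace-zero matrix satisfies $\alpha^2=-I$ and has order $4$), so the trace argument of Lemma \ref{lemmacentranormGamma} is available and the conclusion of Proposition \ref{normalizersl} yields $N_\Gamma(C)=p^{-1}(N_G(H))$. Comparing the two, $N_\Gamma[C]=N_\Gamma(C)$; in particular the commensurator is the genuine normalizer and $C\trianglelefteq N_\Gamma(C)$. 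It then remains to see that $E_{\vcyc[C]}N_\Gamma(C)$ is a model for $\underline{E}W_\Gamma(C)$, viewed as an $N_\Gamma(C)$-CW-complex through $N_\Gamma(C)\to W_\Gamma(C)=N_\Gamma(C)/C$. Since $C$ is infinite cyclic and normal, a subgroup $S\le N_\Gamma(C)$ has finite image in $W_\Gamma(C)$ if and only if $S\cap C$ has finite index in $S$, which forces $S$ to be finite (when $S\cap C=1$) or infinite virtually cyclic and commensurable with $C$ (when $S\cap C$ is infinite); thus $\vcyc[C]$ is precisely the set of subgroups of $N_\Gamma(C)$ with finite image in $W_\Gamma(C)$. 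Pulling back $\underline{E}W_\Gamma(C)$ along $N_\Gamma(C)\to W_\Gamma(C)$ therefore yields a complex whose isotropy groups are exactly $\vcyc[C]$ and whose $S$-fixed-point set equals the fixed-point set of the finite image of $S$ in $W_\Gamma(C)$, which is contractible for each $S\in\vcyc[C]$; so it is a model for $E_{\vcyc[C]}N_\Gamma(C)$.

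Feeding these three identifications into Theorem \ref{luckweiermannthm} produces precisely the displayed $\Gamma$-push-out and shows that $X$ is a model for $\underline{\underline{E}}\Gamma$. I expect the main obstacle to be the first step: showing that $p$ sets up an honest bijection of conjugacy orbits of commensurability classes, so that choosing a single lift $C$ for each $H\in I$ neither misses a class nor repeats one. The bookkeeping around the two-to-one behaviour of $p$ on cyclic subgroups is where care is needed, whereas the second and third steps are close transcriptions of the arguments already given for $G$.
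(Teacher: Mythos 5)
Your proof is correct, but it takes a genuinely different route from the paper's. The paper does not re-run the L\"uck--Weiermann machinery for $\Gamma$ at all: it observes that $p\colon\Gamma\to G$ has finite kernel, so that restriction along $p$ turns $\underline{E}G$ into $\underline{E}\Gamma$ and $\underline{\underline{E}}G$ into $\underline{\underline{E}}\Gamma$ (the $p$-preimages of the families $\fin$ and $\vcyc$ are again $\fin$ and $\vcyc$), and then uses Lemma \ref{lemmacentranormGamma} together with the transformation-group Lemma \ref{transformariongrouplemma} to identify $p^*\bigl(G\times_{N_G(p(C))}\underline{E}N_G(p(C))\bigr)$ with $\Gamma\times_{N_\Gamma(C)}\underline{E}N_\Gamma(C)$, and similarly for the $W$-corners; since restriction preserves pushouts, applying $p^*$ to the $G$-pushout of Theorem \ref{classifyinghilbert} finishes the proof in a few lines. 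You instead verify the hypotheses of Theorem \ref{luckweiermannthm} directly for $\Gamma$, mirroring the proof of Theorem \ref{classifyinghilbert}: the $p$-induced bijection of orbits of commensurability classes, the identification $N_\Gamma[C]=N_\Gamma(C)=p^{-1}(N_G(H))$, and the identification of $\vcyc[C]$ with the family of subgroups of $N_\Gamma(C)$ having finite image in $W_\Gamma(C)$. The paper's route buys brevity: all the bookkeeping about virtually cyclic subgroups of $\Gamma$ is inherited from $G$ for free through functoriality of $p^*$. Your route buys three things: it makes explicit the orbit-counting that the pullback argument silently absorbs (completeness and irredundancy of $\calI$); your observation that every infinite-order element of $\Gamma$ has nonzero trace (trace zero forces $A^2=-I$, hence order four) extends Lemma \ref{lemmacentranormGamma} and Proposition \ref{normalizersl}---stated in the paper only for hyperbolic or parabolic elements---to mixed elements, cleanly settling a point the paper handles with the vaguer remark that one may ``assume $A$ is hyperbolic or parabolic up to changing the embedding''; and your argument that pulling back $\underline{E}W_\Gamma(C)$ along $N_\Gamma(C)\to W_\Gamma(C)$ models $E_{\vcyc[C]}N_\Gamma(C)$ works for an arbitrary normal infinite cyclic subgroup, with no case analysis on the isomorphism type of the normalizer, which is also a small improvement on the corresponding step in the proof of Theorem \ref{classifyinghilbert}.
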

\begin{proof}
Since $p:\Gamma \to G$ has finite kernel we have $p^*\underline{E}G=\underline{E}\Gamma$, and $p^*\underline{\underline{E}}G=\underline{\underline{E}}\Gamma$. On the other hand, from Lemma \ref{lemmacentranormGamma} and Lemma \ref{transformariongrouplemma} we get the identifications 
\[p^*(G\times_{N_G(p(C))} \underline{E}N_G(p(C)))=\Gamma \times_{\inv{p}(N_G(p(C)))}f^*\underline{E}N_G(p(C))=\Gamma\times_{N_\Gamma(C)}\underline{E}N_\Gamma(C),\text{ and }
\]
\[p^*(G\times_{N_G(p(C))} \underline{E}W_G(p(C)))=\Gamma \times_{\inv{p}(W_G(p(C)))}f^*\underline{E}W_G(p(C))=\Gamma\times_{W_\Gamma(C)}\underline{E}W_\Gamma(C).
\]
Now the proof is complete once we apply $p^*$ to the $G$-pushout from Theorem \ref{classifyinghilbert}.
\end{proof}


\section{Computations of algebraic K-theory}


In this section we give expresions for the Whitehead groups $Wh_q(G;R)$ and $Wh_q(\Gamma;R)$ in terms of Whitehead groups of finite groups and the nilgroups of $R$ and $R(\dbZ/2)$.

\begin{definition}
  Consider $G=\philbert$. We define the following sets:
  \begin{enumerate}
      \item $\calF$ is a collection of representatives of conjugacy classes of finite maximal subgroups of $G$.
      \item $\calP$ is a collection of representatives of conjugacy classes of maximal cyclic subgroups of $G$ generated by a totally parabolic element.
      \item $\calH_1$ (resp. $\calH_2$) is a collection of representatives of conjugacy classes of maximal cyclic subgroups of $G$ generated by a totally hyperbolic, but not hyperbolic-parabolic, element such that its normalizer is isomorphic to $\dbZ^n$ (resp. $\dbZ^n\rtimes \dbZ/2$).
      \item $\calH\calP_1$ (resp. $\calH\calP_2$) is a collection of representatives of conjugacy classes of maximal cyclic subgroups of $G$ generated by a hyperbolic-parabolic element such that its normalizer is isomorphic to $\dbZ^{n-1}$ (resp. $\dbZ^{n-1}\rtimes \dbZ/2$).
      \item $\calM_1^m$ (resp. $\calM_2^m$) is a collection of representatives of conjugacy classes of maximal cyclic subgroups of $G$ generated by a mixed element, with exactly $m$ hyperbolic components $1\leq m\leq n-1$, such that its normalizer is isomorphic to $\dbZ^m$ (resp. $\dbZ^m\rtimes \dbZ/2$).
  \end{enumerate}
\end{definition}

\begin{theorem}\label{whphilbert}
 Consider $G=\philbert$ and let $R$ be an associative ring with unitary element, then, for al $q\in\dbZ$ we have an isomorphism $$Wh_q(G;R)\cong \bigoplus\limits_{M\in \calF} Wh_q(M;R) \bigoplus N_{\calP} \bigoplus N_{\calH} \bigoplus N_{\calH\calP} \bigoplus N_{\calM},$$
 where
 \begin{align*} 
     N_{\calP} &= \bigoplus\limits_{H\in\calP}\left(\dosNil{n-1} \right) \\ 
     N_{\calH} &=  \bigoplus\limits_{H\in\calH_1}\left(\dosNil{n-1} \right) \oplus \bigoplus\limits_{H\in \calH_2}\left( \Nil{n-1}    \right)  \\
     N_{\calH\calP} &= \bigoplus\limits_{H\in\calH\calP_1}\left(\dosNil{n-2} \right) \oplus \bigoplus\limits_{H\in \calH\calP_2}\left( \Nil{n-2}    \right) \\
     N_{\calM} &= \bigoplus\limits^{n-1}_{j=1} \left(  \bigoplus\limits_{H\in\calM_1^j}\left(\dosNil{j-1} \right) \oplus \bigoplus\limits_{H\in \calM_2^j}\left( \Nil{j-1}    \right) \right).
\end{align*}
\end{theorem}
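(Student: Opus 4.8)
The plan is to split $Wh_q(G;R)$ into two pieces and treat each separately. Since $G=\philbert$ acts properly on the $CAT(0)$ space $\dbH^n$ it satisfies the Farrell--Jones conjecture, so Lemma \ref{splitting} applies and gives
\[
Wh_q(G;R)\cong H_q^G(EG\to\underline{E}G;\KR)\oplus H_q^G(\underline{E}G\to\underline{\underline{E}}G;\KR).
\]
The first summand is precisely the object treated in \cite{BSS}: the $p$-chain spectral sequence argument there only involves the finite subgroups of $G$ and a model for $\underline{E}G$, and it carries over verbatim to coefficients in $\KR$, yielding $H_q^G(EG\to\underline{E}G;\KR)\cong\bigoplus_{M\in\calF}Wh_q(M;R)$. (For $R=\dbZ$ the Nil-groups vanish and one recovers the splitting of \cite{BSS}.) This produces the first term of the statement, so all remaining work lies in the second summand.

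For the second summand I would invoke the Corollary after Theorem \ref{classifyinghilbert} decomposing $H_q^G(\underline{E}G\to\underline{\underline{E}}G;\KR)$, which already identifies
\[
H_q^G(\underline{E}G\to\underline{\underline{E}}G;\KR)\cong\bigoplus_{H\in I}H_q^{N_G(H)}(\underline{E}N_G(H)\to\underline{E}W_G(H);\KR),
\]
the sum running over conjugacy classes of maximal infinite cyclic subgroups. The next step is purely organizational: using Propositions \ref{mixedcom} and \ref{parabcom} I would partition $I$ according to the dynamical type of a generator of $H$ and, within each type, according to whether $N_G(H)$ is free abelian or a semidirect product with $\dbZ/2$. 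This reproduces exactly the index sets $\calP$, $\calH_1$, $\calH_2$, $\calH\calP_1$, $\calH\calP_2$, $\calM_1^j$, $\calM_2^j$ and records the rank $r$ of the free abelian part in each case: $r=n$ for $\calP,\calH_1,\calH_2$, $r=n-1$ for $\calH\calP_1,\calH\calP_2$, and $r=j$ for $\calM_1^j,\calM_2^j$.

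The computational heart is the evaluation of a single relative term $H_q^{N}(\underline{E}N\to\underline{E}W_G(H);\KR)$ for $N=N_G(H)$. Because $H$ is already maximal cyclic in $G$ it is maximal in $N$, and since every element of $N$ outside its free abelian part is an involution one has $H\subseteq\dbZ^r$, hence $W_G(H)\cong\dbZ^{r-1}$ or $\dbZ^{r-1}\rtimes\dbZ/2$; moreover $\underline{E}W_G(H)$ is exactly the model $E_{\vcyc[H]}N$ for the single commensuration class $[H]$, as checked in the proof of Theorem \ref{classifyinghilbert}. The value of such a term is supplied by the Bass--Heller--Swan type computations of \cite{LR14}: iterating the fundamental theorem of algebraic $K$-theory over the $r-1$ transverse directions yields the binomial pattern of Nil-groups, while the orientation of the distinguished $H$-direction fixes the number of copies, giving two copies $\dosNil{r-1}$ when $N\cong\dbZ^r$ and a single copy $\Nil{r-1}$ when $N\cong\dbZ^r\rtimes\dbZ/2$. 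Substituting the recorded values of $r$ and summing over the partition of $I$ assembles precisely $N_{\calP}\oplus N_{\calH}\oplus N_{\calH\calP}\oplus N_{\calM}$.

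The step I expect to be the main obstacle is this last one: transporting the \cite{LR14} computation onto each $N_G(H)$ with the correct orientation. One must use the \emph{isolated} relative term $H_q^N(\underline{E}N\to E_{\vcyc[H]}N;\KR)$ rather than the full Whitehead group of $N$, which in the non-orientable case also carries a finite-subgroup contribution from $H_q^N(EN\to\underline{E}N;\KR)$; ignoring this would introduce spurious terms of finite type. The delicate point is the orientable/non-orientable dichotomy that distinguishes $\dosNil{r-1}$ from $\Nil{r-1}$: one must verify that the $\dbZ/2$-action ``by $-1$'' of Proposition \ref{mixedcom} actually inverts the maximal cyclic subgroup $H$, so that the relevant virtually cyclic subgroup in the $H$-direction is infinite dihedral and contributes a single Nil copy. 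This compatibility is exactly what forces the index sets $\calH_2$, $\calH\calP_2$, $\calM_2^j$ to carry $\Nil{r-1}$ rather than $\dosNil{r-1}$.
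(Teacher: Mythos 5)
Your proposal follows essentially the same route as the paper's own proof: the splitting of Lemma \ref{splitting}, the observation that the \cite{BSS} computation of $H_q^G(EG\to\underline{E}G;\KR)$ carries over to arbitrary $R$, the decomposition over normalizers of maximal infinite cyclic subgroups coming from Theorem \ref{classifyinghilbert}, the partition of $I$ by dynamical type via Propositions \ref{mixedcom} and \ref{parabcom}, and the evaluation of each relative term by \cite[Proof of Theorem 1.11]{LR14}, with $\dosNil{r-1}$ in the free abelian case and $\Nil{r-1}$ in the semidirect product case. The one caveat is your justification of the Farrell--Jones conjecture: a proper but non-cocompact action on a CAT(0) space does not by itself yield the conjecture, and the paper instead cites the lattice result of \cite{KLR14}; this is a citation-level repair rather than a structural gap.
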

\begin{proof}
First of all, we know that $G$ satisfies the Farrell-Jones conjecture as a consequence of the main result of \cite{KLR14}. Now, we can use the splitting
$$Wh_q(G;R)\cong H_q^G(EG\to \underline{\underline{E}}G;\KR)\cong H_q^G(EG\to \underline{E}G;\KR)\oplus H_q^G(\underline{E}G\to \underline{\underline{E}}G;\KR).$$
For first term in the right hand side we have the isomorphism $$H_q^G(EG\to \underline{E}G;\KR)\cong \bigoplus\limits_{M\in \calF} Wh_q(M;R),$$ which is proved in \cite[Theorem 1.1]{BSS} for $R=\dbZ$, nevertheless, the proof carries on for general $R$ with out any changes. Now, we have to calculate $H_q^G(\underline{E}G\to \underline{\underline{E}}G;\KR)$. From Theorem \ref{classifyinghilbert} we get the isomorphism $$H_q^G(\underline{E}G\to \underline{\underline{E}}G;\KR)\cong \bigoplus\limits_{H\in I}H_q^{N_G(H)}(\underline{E}N_G(H)\to \underline{E}W_G(H);\KR),$$
where $I$ is a collection of representatives of conjugacy classes of maximal infinite virtually cyclic subgroups of $G$.\\
Clearly, the set $I$ can be expressed as the disjoint union $\calP\sqcup \calH_1 \sqcup \calH_2 \sqcup \calH\calP_1 \sqcup \calH\calP_2 \sqcup \bigsqcup\limits_{r=1}^{n-1} \calM_1^r \sqcup \bigsqcup\limits_{r=1}^{n-1} \calM_2^r.$ We define $N_{\calP}=\bigoplus\limits_{H\in \calP} H_q^{N_G(H)}(\underline{E}N_G(H)\to \underline{E}W_G(H))$, $N_{\calH}=\bigoplus\limits_{H\in \calH_1\sqcup \calH_2} H_q^{N_G(H)}(\underline{E}N_G(H)\to \underline{E}W_G(H))$, and so on.\\
From Proposition \ref{mixedcom} and Proposition \ref{parabcom} we know that for any infinite cyclic subgroup $H$ of $G$ its normalizer $N_G(H)$ is either isomorphic to $\dbZ^r$ or to $\dbZ^r\rtimes \dbZ/2$ for some $r\geq0$, and from \cite[Proof of Theorem 1.11]{LR14} we have that
$$H_q^{N_G(H)}(\underline{E}N_G(H)\to \underline{E}W_G(H);\KR)=\begin{cases}
\dosNil{r-1} & \text{if } N_G(H)\cong \dbZ^r, \text{ or }\\
\Nil{r-1}    & \text{if } N_G(H)\cong \dbZ^r\rtimes/2. 
\end{cases}$$
Now the result follows.
\end{proof}

\begin{remark}
Note that if $R$ is a regular ring, then all nil-groups $NK_i(R)$ vanish, so in this case we get $Wh_q(\philbert;R)\cong \oplus_{M\in \calF} Wh_q(M;R)$. In particular, if $R=\dbZ$, we recover Theorem 1.1 from \cite{BSS}.
\end{remark}

\begin{remark}
In \cite[Theorem C]{LPW16} they proved that if $NK_i(R)$ has finite exponent, then it is isomorphic to $\oplus_\infty F$ with $F$ a finite abelian group. Now, assuming all the nil-groups of $R$ have finite exponent, we can greatly simplify the summand $N_{\calP}\oplus N_{\calH} \oplus N_{\calP \calH} \oplus N_{\calM}$ from Theorem \ref{whphilbert}, in fact, the later would be isomorphic to $$\bigoplus\limits_{i=0}^{n-1} NK_{q-i}(R).$$
\end{remark}

\begin{remark}
If we consider a Fuchsian group $G$, this is, a discrete subgroup of $PSL_2(\dbR)$, then we can compute the commensurator of any infinite cyclic subgroup $H$ with the same argument as in Lemma \ref{mixedcom} and Lemma \ref{parabcom}. What we would obtain is that $N_G[H]$ is isomorphic to $N_G(H)$ and the later is isomorphic either to $\dbZ$ or to $\dbZ \rtimes \dbZ/2 \cong D_\infty$. Now, following the proof of Theorem \ref{whphilbert}, we get, for all $q\in\dbZ$,
\[
Wh_q(G;R)\cong \bigoplus_{M\in \calF}Wh_q(M:R) \oplus \bigoplus_{I_1} (NK_q(R)\oplus NK_q(R))\oplus \bigoplus_{I_2} NK_q(R),
\]
where $\calF$ is a complete set of representatives of conjugacy classes of maximal finite subgroups, $I_1$ (resp. $I_2$) is a complete set of representatives of conjugacy classes of maximal infinite cyclic subgroups such that $N_G(H)\cong \dbZ$ (resp. $N_G(H)\cong D_\infty$). This generalizes the main result of \cite{BJPP02} and \cite{BJPP01}. A particular interesting example is the fundamental group of an orientable closed surface of genus at least two, since these are torsion free groups the first summand vanishes, hence the Whitehead groups are just sums of copies of nil-groups.
\end{remark}

Now we are going to perform the computation of $Wh_q(\hilbert)$. In order to do so we shall need the following three lemmas.

\begin{lemma}\cite[Lemma 3.7]{LR14}\label{transformariongrouplemma}
Let $f:G_1\to G_2$ be a surjective group homomorphism. Consider a subgroup $H\leq G_2$. Let $Y$ be a $G_1$-space and $Z$ be an $H$-space. Denote by $f_H: \inv{f}(H)\to H$ the restriction of $f$. Then there is a natural $G_1$-homeomorphism
\[ 
G_1\times_{\inv{f}(H)}(Y\times f_H^*Z)\to Y\times f^*(G_2\times_HZ),
\]
where $f_H^*$ and $f^*$ are the restrictions and the actions on the products are diagonal actions.
\end{lemma}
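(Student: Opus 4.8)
The final statement to prove is Lemma \ref{transformariongrouplemma}, which asserts a natural $G_1$-homeomorphism
\[
G_1\times_{\inv{f}(H)}(Y\times f_H^*Z)\to Y\times f^*(G_2\times_HZ).
\]
Since this is cited from \cite[Lemma 3.7]{LR14}, the cleanest approach is to construct an explicit $G_1$-equivariant map and exhibit its inverse, rather than appeal to any deep machinery.

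The plan is to first unwind all the definitions. An element of the left-hand side is an equivalence class $[g_1,(y,z)]$ where $g_1\in G_1$, $y\in Y$, $z\in Z$, and the $\inv{f}(H)$-action used to form the balanced product is the diagonal one: for $k\in\inv{f}(H)$ we have $[g_1 k^{-1}, (ky, f_H(k)z)] = [g_1,(y,z)]$, using that $k$ acts on $Y$ via $G_1$, on $f_H^*Z$ via the homomorphism $f_H$. On the right-hand side, $f^*(G_2\times_H Z)$ is $G_2\times_H Z$ with $G_1$ acting through $f$, so a typical element of $Y\times f^*(G_2\times_H Z)$ is a pair $(y',[g_2,z'])$ with the diagonal $G_1$-action $g_1\cdot(y',[g_2,z']) = (g_1 y', [f(g_1)g_2, z'])$.

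Next I would write down the candidate map explicitly. Define $\Phi([g_1,(y,z)]) = (g_1 y, [f(g_1), z])$. The first step is to check this is well-defined: replacing $(g_1,(y,z))$ by $(g_1 k^{-1},(ky, f_H(k)z))$ for $k\in\inv{f}(H)$ gives $(g_1 k^{-1} k y, [f(g_1 k^{-1}), f_H(k)z]) = (g_1 y, [f(g_1)f(k)^{-1}, f(k)z])$, and since $f(k)=f_H(k)\in H$ this equals $(g_1 y,[f(g_1),z])$ by the balancing relation in $G_2\times_H Z$. Then I would verify $G_1$-equivariance, which is immediate from the formula: $\Phi([g g_1,(y,z)]) = (g g_1 y, [f(g)f(g_1),z]) = g\cdot(g_1 y,[f(g_1),z])$. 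For the inverse, given $(y',[g_2,z'])$ I would choose a preimage $\tilde g_2\in G_1$ with $f(\tilde g_2)=g_2$ (using surjectivity of $f$) and set $\Psi(y',[g_2,z']) = [\tilde g_2,(\tilde g_2^{-1}y', z')]$; checking that this is independent of the choice of $\tilde g_2$ and of the representative $(g_2,z')$ is the core bookkeeping, and then $\Phi$ and $\Psi$ are mutually inverse continuous maps, hence homeomorphisms. Naturality in $Y$ and $Z$ follows since everything is given by explicit formulas.

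The main obstacle, such as it is, is purely the well-definedness of the inverse $\Psi$: surjectivity of $f$ guarantees a preimage $\tilde g_2$ exists, but different choices differ by an element of $\ker f\subseteq\inv{f}(H)$, and one must confirm the balancing relation on the left-hand side absorbs this ambiguity together with the $H$-ambiguity in $[g_2,z']$. This is genuinely routine once the diagonal actions and the identification $f(k)=f_H(k)$ are kept straight, so I would present the two maps, verify well-definedness on both sides, and conclude they are inverse homeomorphisms; the topology is automatic since balanced products carry the quotient topology and all maps are continuous on the level of the defining products.
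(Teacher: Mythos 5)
Your proof is correct. Note that the paper itself offers no argument for this statement at all: it is quoted with the citation \cite[Lemma 3.7]{LR14}, so the explicit verification you give is exactly the kind of routine proof the citation replaces, and your formulas are right. The map $\Phi([g_1,(y,z)])=(g_1y,[f(g_1),z])$ is well defined and $G_1$-equivariant as you check, and your inverse $\Psi(y',[g_2,z'])=[\tilde g_2,(\tilde g_2^{-1}y',z')]$ absorbs both ambiguities correctly: two preimages of $g_2$ differ by an element of $\ker f\subseteq f^{-1}(H)$, which acts trivially on $f_H^*Z$, and a change of representative $[g_2,z']=[g_2h^{-1},hz']$ is matched by replacing $\tilde g_2$ with $\tilde g_2\tilde h^{-1}$ for any $\tilde h\in f^{-1}(H)$ with $f(\tilde h)=h$ (such $\tilde h$ exists because $f$ is surjective, so $f_H$ is surjective onto $H$). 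The one place where your write-up is breezier than it should be is the claim that ``the topology is automatic'': the choice $g_2\mapsto\tilde g_2$ cannot in general be made continuously for topological groups, so continuity of $\Psi$ is not formal. In the setting of this paper all groups are discrete, and then the argument closes: $Y\times f^*(G_2\times_H Z)$ decomposes topologically as a disjoint union over the cosets $g_2H\in G_2/H$ of copies of $Y\times Z$, on each of which $\Psi$ is given by one fixed preimage and is visibly continuous; alternatively one can check directly that $\Phi$ is open, using that the quotient maps defining both balanced products are open. With that caveat made explicit (or with the hypothesis that $G_1$, $G_2$ are discrete stated, as they are throughout the paper), your proof is complete.
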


\begin{lemma}\label{lemmaLR1}
Let $\Gamma=\hilbert$, and let $C$ be a maximal cyclic subgroup of $\Gamma$ such that $N_\Gamma(C)\cong \dbZ^r \rtimes \dbZ/4$. Then, for every $q\in \dbZ$ we have 
\begin{multline*}
H_q^{N_\Gamma(C)}( \underline{E}N_\Gamma(C) \times \underline{E}W_\Gamma(C) \to \underline{E}W_\Gamma(C);\KR) \cong \\  H_q^{N_\Gamma(C)}( \underline{E}N_\Gamma(C) \times E_{Z}W_\Gamma(C) \times E_{Z}\dbZ/4 \to E_{Z}W_\Gamma(C) \times E_{Z}\dbZ/4;\KR),
\end{multline*}
where the actions of $N_\Gamma(C)$ in $\underline{E}W_\Gamma(C)$, $E_{Z}W_\Gamma(C)$ and $E_{Z}\dbZ/4$ come from the projections $q_H: N_\Gamma(C) \to W_\Gamma(C)$ and $W_\Gamma(C)\to \dbZ/4$.
\end{lemma}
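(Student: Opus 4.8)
The plan is to reinterpret both sides as relative homology groups of maps between classifying spaces for explicit families of subgroups of $N_\Gamma(C)$, and then to move between them using the model for $E_{\fbc}V$ furnished by Theorem \ref{EfbcV} together with the fact that the Farrell--Jones conjecture for a virtually cyclic group already holds with the family $\fbc$. Throughout, write $q_H\colon N_\Gamma(C)\to W_\Gamma(C)$ for the projection (so $\ker q_H=C$) and $Z$ also for the image of the centre, which injects into $W_\Gamma(C)$ since $Z\cap C=1$; set $CZ=q_H^{-1}(Z)\cong\dbZ\times\dbZ/2$.

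First I would carry out two reductions at the level of fixed points. One checks that $\underline{E}N_\Gamma(C)\times\underline{E}W_\Gamma(C)$ is a model for $\underline{E}N_\Gamma(C)$, and that $\underline{E}N_\Gamma(C)\times E_Z W_\Gamma(C)\times E_Z\dbZ/4$ is a model for $E_{Sub(Z)}N_\Gamma(C)$ (here one uses that $Z$ is central, so $Sub(Z)=\{1,Z\}$, and that the only finite subgroups of $CZ$ are $1$ and $Z$). Using Lemma \ref{transformariongrouplemma} to compute the restrictions of the $W_\Gamma(C)$-spaces along $q_H$, the two base spaces become, as $N_\Gamma(C)$-spaces, models for $E_{\vcyc[C]}N_\Gamma(C)=\underline{E}W_\Gamma(C)$ and for $E_{Sub(CZ)}N_\Gamma(C)$, respectively. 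Hence the left--hand side equals $H_q^{N_\Gamma(C)}(\underline{E}N_\Gamma(C)\to E_{\vcyc[C]}N_\Gamma(C);\KR)$ and the right--hand side equals $H_q^{N_\Gamma(C)}(E_{Sub(Z)}N_\Gamma(C)\to E_{Sub(CZ)}N_\Gamma(C);\KR)$.

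Next I would interpose the family $\mathcal D=\fin\cup Sub(CZ)$. The key point is that the $N_\Gamma(C)$-pushout of $E_{Sub(CZ)}N_\Gamma(C)\leftarrow E_{Sub(Z)}N_\Gamma(C)\hookrightarrow\underline{E}N_\Gamma(C)$ is a model for $E_{\mathcal D}N_\Gamma(C)$. This is verified by the same fixed-point computation as in Theorem \ref{EfbcV}: on each maximal infinite virtually cyclic subgroup $V=q_H^{-1}(\bar F)\cong\dbZ\rtimes\dbZ/4$ commensurable with $C$ (arising from a maximal finite $\bar F\cong\dbZ/4$ of $W_\Gamma(C)$, so that $V$ is non-orientable with finite part $Z$ and orientation subgroup $CZ$) this pushout restricts to the pushout of Theorem \ref{EfbcV} for $F=Z$, under the identifications $E_F V=E_{Sub(Z)}V$ and $P_V^*E\dbZ/2=E_{Sub(CZ)}V$. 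Excision applied to the pushout then identifies the right--hand side $H_q^{N_\Gamma(C)}(E_{Sub(Z)}N_\Gamma(C)\to E_{Sub(CZ)}N_\Gamma(C);\KR)$ with $H_q^{N_\Gamma(C)}(\underline{E}N_\Gamma(C)\to E_{\mathcal D}N_\Gamma(C);\KR)$.

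Finally I would close the argument by proving $H_q^{N_\Gamma(C)}(E_{\mathcal D}N_\Gamma(C)\to E_{\vcyc[C]}N_\Gamma(C);\KR)=0$, which by the long exact sequence turns the previous group into the left--hand side and finishes the proof. The families $\mathcal D$ and $\vcyc[C]$ differ only by the non-orientable virtually cyclic subgroups commensurable with $C$; decomposing this relative term à la L\"uck--Weiermann over the conjugacy classes of the maximal such subgroups $V$ and applying the induction structure reduces it to a direct sum of groups $H_q^V(E_{\fbc}V\to E_{\vcyc}V;\KR)$, each of which vanishes because $E_{\vcyc}V$ is a point and Farrell--Jones holds for the virtually cyclic $V$ with the family $\fbc$ (\cite{DKR11}, \cite{DQR11}). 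I expect this last step to be the main obstacle: one must organize the passage from the homology of $N_\Gamma(C)$ down to the local $V$-level, computing the normalizers of the $V$ in $N_\Gamma(C)$, checking that distinct maximal $V$ meet inside $\mathcal D$ so that the L\"uck--Weiermann pushout is available, and using Lemma \ref{transformariongrouplemma} together with the induction structure to match the restricted and induced spaces, so that the per-$V$ vanishing assembles into the single stated isomorphism; verifying that the excision isomorphism of the previous paragraph is natural with respect to all these maps is the other delicate point.
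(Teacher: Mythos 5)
Your proposal is sound and reaches the lemma by a genuinely different route than the paper, although both arguments run on the same two inputs: the pushout of Theorem \ref{EfbcV} and the theorem of \cite{DKR11}, \cite{DQR11} that $H_*^V(E_{\fbc}V\to E_{\vcyc}V;\KR)$ vanishes for virtually cyclic $V$. The paper never introduces your intermediate family $\mathcal{D}=\fin\cup Sub(CZ)$. Instead it pulls back along $p_C\colon W_\Gamma(C)\to W_G(p(C))$ the pushout that \cite[Corollary 2.11]{LW12} provides for $W_G(p(C))\cong\dbZ^{r-1}\rtimes\dbZ/2$ (properties (M) and (NM)), applies the homology theory $Y\mapsto H_*^{N_\Gamma(C)}(\underline{E}N_\Gamma(C)\times q_C^*Y\to q_C^*Y;\KR)$ together with its Mayer--Vietoris sequence, and thereby reduces the lemma to one comparison for each maximal finite subgroup $F$ of $W_G(p(C))$; Lemma \ref{transformariongrouplemma} transports that comparison to the non-orientable subgroup $Q_C^{-1}(F)$, where Theorem \ref{EfbcV} plus the $\fbc$-version of Farrell--Jones concludes. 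Your packaging --- both sides rewritten as relative terms of inclusions of families of subgroups of $N_\Gamma(C)$, the double mapping cylinder of $E_{Sub(CZ)}\leftarrow E_{Sub(Z)}\to\underline{E}N_\Gamma(C)$ identified with $E_{\mathcal{D}}N_\Gamma(C)$ by a fixed-point check, then excision --- is correct and conceptually cleaner: it exhibits the lemma as a pure statement about enlarging families, and your identifications of all the spaces involved are right (in particular, the orientable infinite subgroups commensurable with $C$ are exactly the infinite subgroups of $CZ$, so $\mathcal{D}|_V=\fbc|_V$ for every non-orientable $V\in\vcyc[C]$).

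The step you flag as the main obstacle is, however, the one place where your sketch is not yet a proof, and the mechanism you propose there would not work as stated. A decomposition ``\`a la L\"uck--Weiermann'' of $H_*^{N_\Gamma(C)}(E_{\mathcal{D}}\to E_{\vcyc[C]};\KR)$ over conjugacy classes of maximal non-orientable $V$ cannot be obtained by citing \cite[Theorem 2.3]{LW12}: all subgroups in $\vcyc[C]\setminus\mathcal{D}$ are commensurable with one another, so the commensurability relation has a single equivalence class whose stabilizer is all of $N_\Gamma(C)$, and the L\"uck--Weiermann pushout degenerates to a tautology. Two honest repairs exist. (i) Invoke the transitivity principle for the pair $\mathcal{D}\subseteq\vcyc[C]$ (available in \cite{FJ93} and \cite{LR05}): its hypothesis is precisely that $H_*^V(E_{\mathcal{D}|_V}V\to pt;\KR)=0$ for each $V\in\vcyc[C]\setminus\mathcal{D}$, which by your observation $\mathcal{D}|_V=\fbc|_V$ is exactly the \cite{DKR11}, \cite{DQR11} vanishing; this requires no normalizer computations at all. (ii) Build the pushout from $E_{\mathcal{D}}$ to $E_{\vcyc[C]}$ by hand over the subgroups $V_F=q_H^{-1}(\bar F)$; the fixed-point verification then needs that every non-orientable subgroup commensurable with $C$ lies in a unique conjugate of a unique $V_F$, and this uniqueness is precisely the (M)/(NM) property of $W_G(p(C))$ --- that is, the same input the paper feeds into \cite[Corollary 2.11]{LW12}. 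So your route is viable and more family-theoretic, but its last step either appeals to the transitivity principle or silently re-derives the (M)/(NM) decomposition on which the paper's Mayer--Vietoris argument rests.
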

\begin{proof}
Recall that we are using the notation $G=\philbert$ and $p:\Gamma \to G$. Denote by $p_C:W_\Gamma(C) \to W_G(p(C))$ the restriction of $p$, by $q_C: N_\Gamma (C) \to W_\Gamma(C)$ the canonical projection, and by $Q_C$ the composition $p_C\circ q_C$. We have that $N_\Gamma(C)\cong \dbZ^r\rtimes \dbZ/4$, $W_\Gamma(C) \cong \dbZ^{r-1} \rtimes \dbZ/4$, $N_G(p(C))\cong \dbZ^r \rtimes \dbZ/2$, and $W_G(p(C))\cong \dbZ^{r-1}\rtimes \dbZ/2$.\\

First, we know that $W_G(p(C))$ satisfies properties $(M)$ and $(NM)$, so that we have from \cite[Corollary 2.11]{LW12} the $W_G(p(C))$-pushout
\[\LW{\coprod_{F\in J_C} W_G(p(C)) \times_F EF}{EW_G(p(C))}{\coprod_{F\in J_C} W_G(p(C))/F}{\underline{E}W_G(p(C)),}\]
where $J_C$ is a complete system of representatives of maximal finite subgroups of $W_G(p(C))$.\\

Using the fact that $W_\Gamma(C)$ is an extension of $W_G(p(C))$ by a finite group and Lemma\ref{transformariongrouplemma}, we have the identifications
\begin{align*}
p_C^*EW_G(p(C)) &= E_ZW_\Gamma(C),\\
p_C^*\underline{E}W_G(p(C)) &=\underline{E}W_\Gamma(C),\\
p_C^*W_G(p(C))\times_F EF &= W_\Gamma(C)\times_{p_C^{-q}(F)} p_C^* EF, \text{ and }\\
p_C^*W_G(p(C))/F &= W_\Gamma (C) / \inv{p_C}(F),
\end{align*}
where we are abusing of notation by writing $p_C^*EF$ instead of $p_C^*|_{\inv{p_C}(F)} EF$. Now we get the $W_\Gamma(C)$-pushout 
\[
\LW{\coprod_{F\in J_C} W_\Gamma(C)\times_{\inv{p_C}(F)}p_C^*EF }{ E_Z W_\Gamma(C) }{ \coprod_{F\in J_C} W_\Gamma(C)/\inv{p_C}(F) }{ \underline{E}W_\Gamma(C). }
\]
Consider the $W_\Gamma(C)$-homology theory that is obtained by assigning to a $W_\Gamma(C)$-CW-complex $Z$ the abelian groups $H_n^{N_\Gamma(C)}(\underline{E}N_\Gamma(C) \times q_C^*Z\to q_C^*Z;\KR)$. Using the Mayer-Vietoris sequence associated to this $W_\Gamma(C)$-homology theory, the pushout above, and the fact that $E_ZW_\Gamma(C) \times E_Z \dbZ/4$ is a model for  $E_ZW_\Gamma(C)$, we get the the desired isomorphism once we prove, for all $n\in \dbZ$ and for all $F\in J_C$, the following isomorphism
\begin{multline}\label{multline1}
H_n^{N_\Gamma(C)}(\underline{E}N_\Gamma(C) \times q_C^*( W_\Gamma(C) \times_{\inv{p_C}(F)} p_C^*EF) \to q_C^*( W_\Gamma(C) \times_{\inv{p_C}(F)} p_C^*EF);\KR)\\
\cong H_n^{N_\Gamma(C)}(\underline{E}N_\Gamma(C) \times q_H^*(W_\Gamma(C)/\inv{p_C}(F) \to q_C^*(W_\Gamma(C)/\inv{p_C}(F) ;\KR).
\end{multline}
From Lemma \ref{transformariongrouplemma} we have 
\begin{align*}
\underline{E}N_\Gamma(C)\times q_C^*(W_\Gamma(C)\times_{\inv{p_C}}p_C^*EF) &= N_\Gamma(C) \times_{\inv{Q_C}(F)} ( \underline{E} \inv{Q_C}(F) \times Q_C^* EF ),  \\
W_\Gamma(C)\times_{\inv{p_C}(F)}p_C^*EF &=  N_\Gamma(C) \times_{\inv{Q_C}(F)} Q_C^*EF ,\\
\underline{E}N_\Gamma(C)\times q_C^*(W_\Gamma(C)/ \inv{p_C}(F) ) &= N_\Gamma(C) \times_{\inv{Q_C}(F)} \underline{E} \inv{Q_C}(F), \text{ and} \\
 q_C^*(W_\Gamma(C)/ \inv{p_C}(F) &= N_\Gamma(C) \times_{\inv{Q_C}(F)} pt  .
\end{align*}
Then, the left and right hand side of (\ref{multline1}) are respectively isomorphic to
\begin{equation}\label{aux1}
H_n^{\inv{Q_C}(G)} ( \underline{E} \inv{Q_C}(F) \times Q_C^* EF \to Q_C^*EF ;\KR),
\end{equation}
and
\begin{equation}\label{aux2}
H_n^{\inv{Q_C}(G)} ( \underline{E} \inv{Q_C}(F) \to pt ;\KR),
\end{equation}
On the other hand, from Theorem \ref{EfbcV}, we obtain the $\inv{Q_C}(F)$-pushout
\[
\LW{\underline{E} \inv{Q_C}(F) \times Q_C^* EF }{ \underline{E} \inv{Q_C} (F) } {Q_C^* EF}{ E_{\fbc} \inv{Q_C}(F). }
\]
So, taking the homology of the fibers of the vertical arrows, we have
\begin{multline*}
H_n^{\inv{Q_C}(F)} ( \underline{E} \inv{Q_C}(F) \times Q_C^* EF \to Q_C^* EF  ;\KR )\\
\cong 
H_n^{\inv{Q_C}(F)} ( \underline{E} \inv{Q_C} (F) \to E_{\fbc} \inv{Q_C}(F)  ;\KR )\cong H_n^{\inv{Q_C}(F)} ( \underline{E} \inv{Q_C} (F) \to pt  ;\KR ),
\end{multline*}
where the secod isomorphisms comes from the Farrell-Jones conjecture for the family $\fbc$ of finite-by-cyclic subgroups. Now we conclude that (\ref{aux1}) and (\ref{aux2}) are isomorphic.
\end{proof}

\begin{lemma}\label{lemmaLR2}
Let $\Gamma=\hilbert$, and let $C$ be a maximal cyclic subgroup of $\Gamma$ such that $N_\Gamma(C)\cong \dbZ^r \rtimes \dbZ/4$. Then, for every $q\in \dbZ$ and every maximal finite subgroup $F$ of $\Gamma$ we have 
\begin{multline}
H_j^{N_\Gamma(C)}(\underline{E}N_\Gamma(C) \times E_ZW_\Gamma(C) \times F/Z \to E_ZW_\Gamma (C) \times F/Z ;\KR) \\
\cong \bigoplus\limits_{i=0}^{r-1}( NK_{j-i}(R(\dbZ/2)) \oplus NK_{j-i}(R(\dbZ/2)))^{\binom{r-1}{i}},
\end{multline}
and the action of $F\cong \dbZ/4$ comes from flipping each pair of of Nil-groups.
\end{lemma}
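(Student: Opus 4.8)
The plan is to strip off the finite orbit $F/Z$ by the induction structure, then trade the trivially--acting central involution $Z$ for a change of the coefficient ring from $R$ to $R(\dbZ/2)$, and finally quote the $\dbZ^{r}$--computation of \cite{LR14}. The first observation is how $F/Z$ sits as an $N_\Gamma(C)$--set. Writing $Q\colon N_\Gamma(C)\cong\dbZ^{r}\rtimes\dbZ/4\twoheadrightarrow\dbZ/4$ for the projection onto the finite part, the relevant copy of $F/Z$ is the single orbit $N_\Gamma(C)/S$ with $S=Q^{-1}(Z)$, where $Z\cong\dbZ/2$ is the unique order--two subgroup of $\dbZ/4$. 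The crucial point is that the action of $\dbZ/4$ on $\dbZ^{r}$ factors through $\dbZ/4\to\dbZ/2$ (multiplication by $-1$), so the central $Z$ lies in its kernel; hence $Z$ acts trivially on $\dbZ^{r}$ and $S=\dbZ^{r}\times\dbZ/2$ is an honest direct product with $Z$ a central direct factor. Since $C\subseteq\dbZ^{r}=\ker Q\subseteq S$, both $\underline{E}N_\Gamma(C)$ and $E_ZW_\Gamma(C)$ restrict to $S$--CW--complexes on which $Z$ acts trivially: $\mathrm{res}_S\,\underline{E}N_\Gamma(C)$ is a model for $\underline{E}S$, and $\mathrm{res}_S\,E_ZW_\Gamma(C)$ is the inflation to $S$ of a model for $E_{\vcyc[\bar C]}\bar S$, where $\bar S=S/Z\cong\dbZ^{r}$ and $\bar C$ is the image of $C$.

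Second, I would apply Lemma \ref{transformariongrouplemma} (with the identity homomorphism, giving $\underline{E}N_\Gamma(C)\times E_ZW_\Gamma(C)\times N_\Gamma(C)/S\cong N_\Gamma(C)\times_S\mathrm{res}_S(\underline{E}N_\Gamma(C)\times E_ZW_\Gamma(C))$, compatibly with the structure map that collapses the factor $\underline{E}N_\Gamma(C)$) together with the induction isomorphism of the equivariant homology theory with coefficients in $\KR$. This rewrites the left--hand side as
\[
H_j^{S}\bigl(\underline{E}S\times\mathrm{res}_S\,E_ZW_\Gamma(C)\to\mathrm{res}_S\,E_ZW_\Gamma(C);\KR\bigr).
\]
Now I would invoke a change of coefficients along the trivially--acting central direct factor $Z$: since $S=\bar S\times Z$ and every space in sight is inflated from $\bar S$, the two $\bar S$--homology theories $Y\mapsto H_*^{S}(\mathrm{infl}\,Y;\KR)$ and $Y\mapsto H_*^{\bar S}(Y;\dbK_{R(\dbZ/2)})$ agree on orbits (on $\bar S/\bar H$ both compute $K_*(R(\bar H\times Z))=K_*(R(\dbZ/2)(\bar H))$) and hence coincide by the equivariant Eilenberg--Steenrod uniqueness theorem. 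Using that $E\dbZ^{r}\times E_{\vcyc[\bar C]}\dbZ^{r}=E\dbZ^{r}$, this turns the displayed group into
\[
H_j^{\dbZ^{r}}\bigl(E\dbZ^{r}\to E_{\vcyc[\bar C]}\dbZ^{r};\dbK_{R(\dbZ/2)}\bigr).
\]

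Third, this last relative group is exactly the contribution of the single maximal infinite cyclic subgroup $\bar C$ to the L\"uck--Weiermann decomposition of $\underline{\underline{E}}\dbZ^{r}$; by \cite[Theorem 1.7 and its proof]{LR14} (the $\dbZ^{r}$--case, with $\dbZ^{r}/\bar C\cong\dbZ^{r-1}$) it is isomorphic to $\bigoplus_{i=0}^{r-1}(NK_{j-i}(R(\dbZ/2))\oplus NK_{j-i}(R(\dbZ/2)))^{\binom{r-1}{i}}$, which is the asserted right--hand side. Finally, to identify the $F\cong\dbZ/4$--action I would trace an order--four element through the identifications: it acts on $F/Z$ by interchanging the two points, equivalently by $-1$ on $\bar S=\dbZ^{r}$, and multiplication by $-1$ reverses the cyclic direction $\bar C$ and so interchanges the two Nil--summands sitting at its two ends, while acting trivially on the central $Z$ and hence on the coefficient ring $R(\dbZ/2)$; this is precisely the flip of each pair of Nil--groups. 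I expect the main obstacle to be the simultaneous bookkeeping of the induction structure and the trivially--acting central factor---keeping straight which group acts through which projection---and in particular making the change--of--coefficients isomorphism $H_*^{S}(-;\KR)\cong H_*^{\bar S}(-;\dbK_{R(\dbZ/2)})$ natural enough that the residual $\dbZ/4$--action is carried along and seen to permute the pairs; the appeal to \cite{LR14} for the underlying $\dbZ^{r}$ double--Nil term is then essentially a black box.
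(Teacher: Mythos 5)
Your proposal is correct and follows essentially the same route as the paper's proof: both use Lemma \ref{transformariongrouplemma} and the induction structure to trade the orbit $F/Z$ for the index-two subgroup $S=\inv{f_C}(Z)\cong\dbZ^{r}\times\dbZ/2$, identify the resulting pair of spaces as $\underline{E}S$ mapping to the inflation of $E\dbZ^{r-1}$, and end at the Bass--Heller--Swan double Nil terms over $R(\dbZ/2)$, with the flip action coming from inversion reversing the cyclic direction of $C$. The only cosmetic difference is the endgame: you absorb the trivially acting central $Z$ into the coefficient ring and quote the $\dbZ^{r}$-computation of \cite{LR14}, whereas the paper keeps the group $\dbZ^{r}\times\dbZ/2$ and computes inline via the torus $T^{r-1}$ (citing \cite{LR14} only for the action); both rest on the same identity $K_*(R[\dbZ^{r}\times\dbZ/2])\cong K_*\bigl(R(\dbZ/2)[\dbZ^{r}]\bigr)$.
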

\begin{proof}
Using Lemma \ref{transformariongrouplemma}, we have

\begin{align*}
H_j^{N_\Gamma(C)}&(\underline{E}N_\Gamma(C)  \times E_ZW_\Gamma(C) \times F/Z \to E_ZW_\Gamma (C) \times F/Z ;\KR)\\ &\cong 
H_j^{N_\Gamma(C)}(N_\Gamma(C)\times_{\inv{f_C} (Z)} (\underline{E}N_\Gamma(C)\times q_C^*E_ZW_\Gamma(C))\to\underline{E}N_\Gamma(C)\times q_C^*E_ZW_\Gamma(C);\KR)\\   &\cong H_j^{\inv{f_C}(Z)}(\underline{E}N_\Gamma(C)\times q_C^*E_ZW_\Gamma(C) \to q_H^*E_ZW_\Gamma(C) ;\KR) \\ &\cong 
H_j^{\inv{f_C}(Z)}(\underline{E}\inv{f_C}(Z)\to \underline{E} \inv{f_C}(Z)/C;\KR)\\
&\cong
H_j^{\dbZ\times \dbZ/2}(T^{r-1}\times (\underline{E}(\dbZ \times \dbZ/2)\to \underline{E}\dbZ/2;\KR)\\
&\cong
\bigoplus\limits_{i=0}^{r-1}( NK_{j-i}(R(\dbZ/2)) \oplus NK_{j-i}(R(\dbZ/2)))^{\binom{r-1}{i}}
\end{align*}

Analyzing the proof of \cite[Lemma 3.9(ii)]{LR14} it is easy to see that the action of $F\cong \dbZ/4$ on this homology it is given by flipping the two copies of $NK_*(R(\dbZ/2))$ in every summand.
\end{proof}

\begin{theorem}\label{whhilbert}
 Consider $\Gamma=\hilbert$, $G=\philbert$, and let $R$ be an associative ring with unitary element, then, for all $q\in \dbZ$, we have the isomorphism
 $$Wh_q(\Gamma;R)\cong H_q^G(\underline{E}G;\KR) \bigoplus N_{\calP} \bigoplus N_{\calH} \bigoplus N_{\calH\calP} \bigoplus N_{\calM},$$
 where
 \begin{align*} 
     N_{\calP} &= \bigoplus\limits_{H\in\calP}\left(\dosNildos{n-1} \right) \\ 
     N_{\calH} &=  \bigoplus\limits_{H\in\calH_1}\left(\dosNildos{n-1} \right) \oplus \bigoplus\limits_{H\in \calH_2}\left( \Nildos{n-1}    \right)  \\
     N_{\calH\calP} &= \bigoplus\limits_{H\in\calH\calP_1}\left(\dosNildos{n-2} \right) \oplus \bigoplus\limits_{H\in \calH\calP_2}\left( \Nildos{n-2}    \right) \\
     N_{\calM} &= \bigoplus\limits^{n-1}_{j=1} \left(  \bigoplus\limits_{H\in\calM_1^j}\left(\dosNildos{j-1} \right) \oplus \bigoplus\limits_{H\in \calM_2^j}\left( \Nildos{j-1}    \right) \right). 
\end{align*}
Moreover, if $\calF$ is a complete set of representatives of conjugacy classes of maximal finite subgroups of $G$, then $H_q^G(\underline{E}G;\KR)$ fits in the long exact sequence
\[
\cdots \to \bigoplus\limits_{M\in \calF} H_q^M(EM;\KR)\to H_q^G(EG;\KR)\oplus \bigoplus\limits_{M\in \calF}K_q(R(M))\to H_q^G(\underline{E}G;\KR)\to \cdots.
\]
\end{theorem}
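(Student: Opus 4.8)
The plan is to run, for $\Gamma$, the same two–step strategy that produced Theorem \ref{whphilbert} for the reduced group, while keeping careful track of the central subgroup $Z\cong\dbZ/2$. Since $\Gamma$ satisfies the Farrell--Jones conjecture (again by \cite{KLR14}), Proposition \ref{waldhausen} together with Lemma \ref{splitting} gives
$$Wh_q(\Gamma;R)\cong H_q^\Gamma(E\Gamma\to\underline{E}\Gamma;\KR)\oplus H_q^\Gamma(\underline{E}\Gamma\to\underline{\underline{E}}\Gamma;\KR),$$
so it is enough to match the first summand with $H_q^G(\underline{E}G;\KR)$ and the second with $N_{\calP}\oplus N_{\calH}\oplus N_{\calH\calP}\oplus N_{\calM}$.

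For the second summand I would feed the model for $\underline{\underline{E}}\Gamma$ from Theorem \ref{classifyinghilbert2} into the homology theory exactly as in the Corollary following Theorem \ref{classifyinghilbert}: the $\Gamma$-pushout is a homotopy pushout, so comparing the cofibers of its two vertical maps and invoking the induction structure yields
$$H_q^\Gamma(\underline{E}\Gamma\to\underline{\underline{E}}\Gamma;\KR)\cong\bigoplus_{C\in\calI}H_q^{N_\Gamma(C)}(\underline{E}N_\Gamma(C)\to\underline{E}W_\Gamma(C);\KR).$$
By the Proposition preceding Theorem \ref{classifyinghilbert2} each $N_\Gamma(C)$ is isomorphic either to $\dbZ^r\oplus\dbZ/2$ or to $\dbZ^r\rtimes\dbZ/4$, and $\calI$ splits as $\calP\sqcup\calH_1\sqcup\calH_2\sqcup\calH\calP_1\sqcup\calH\calP_2\sqcup\bigsqcup_j\calM_1^j\sqcup\bigsqcup_j\calM_2^j$ precisely according to this isomorphism type and the rank $r$. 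In the split case $\dbZ^r\oplus\dbZ/2=\dbZ^r\times\dbZ/2$ the central factor acts trivially, so $R(N_\Gamma(C))=R(\dbZ/2)(\dbZ^r)$ and the computation of \cite{LR14} carried out over the ring $R(\dbZ/2)$ gives $\dosNildos{r-1}$; in the twisted case $\dbZ^r\rtimes\dbZ/4$ the relative term is evaluated by Lemma \ref{lemmaLR1} and Lemma \ref{lemmaLR2}, producing $\Nildos{r-1}$, where the $\dbZ/4$-action flipping the two copies of $NK_*(R(\dbZ/2))$ accounts for the passage from a double to a single sum. Summing over the pieces of $\calI$ then reproduces $N_{\calP}\oplus N_{\calH}\oplus N_{\calH\calP}\oplus N_{\calM}$ verbatim.

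For the first summand I would use that $Z=\ker p$ acts freely on $E\Gamma$, so that the induction structure yields $H_q^\Gamma(E\Gamma;\KR)\cong H_q^G(EG;\KR)$, whereas $\underline{E}\Gamma=p^*\underline{E}G$ carries the trivial $Z$-action. Factoring $E\Gamma\to p^*EG\to\underline{E}\Gamma$ and applying $p^*$ together with Lemma \ref{transformariongrouplemma} to the $(M)$–$(NM)$ pushout of $\underline{E}G$ (valid for $G$ by \cite{BSS}), I would identify $H_q^\Gamma(E\Gamma\to\underline{E}\Gamma;\KR)$ with $H_q^G(\underline{E}G;\KR)$. The advertised long exact sequence is then obtained by applying $H^G_*(-;\KR)$ to that same pushout
\[
\LW{\coprod_{M\in\calF}G\times_M EM}{EG}{\coprod_{M\in\calF}G/M}{\underline{E}G}
\]
and reading off the Mayer--Vietoris sequence via the identifications $H_q^G(G\times_M EM;\KR)\cong H_q^M(EM;\KR)$ and $H_q^G(G/M;\KR)\cong K_q(R(M))$. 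I expect the genuine obstacle to be exactly this identification of the first summand: because $Z$ acts freely on $E\Gamma$ but trivially on $\underline{E}\Gamma$, the map $E\Gamma\to\underline{E}\Gamma$ is not $Z$-free, so the induction isomorphism does not apply to the relative term directly, and one must control the intermediate contribution $H_q^\Gamma(E\Gamma\to p^*EG;\KR)$ coming from the center. Once Lemmas \ref{lemmaLR1} and \ref{lemmaLR2} are in hand the Nil computation is routine, so this bookkeeping with the central $\dbZ/2$ is where the real work lies.
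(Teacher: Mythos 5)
Your treatment of the second summand coincides with the paper's proof: after Lemma \ref{splitting}, you feed the pushout of Theorem \ref{classifyinghilbert2} into the homology theory, split $\calI$ according to the isomorphism type of $N_\Gamma(C)$, and evaluate the two cases, with the twisted case $\dbZ^r\rtimes\dbZ/4$ handled by Lemmas \ref{lemmaLR1} and \ref{lemmaLR2} exactly as in the paper. Your shortcut in the split case, rewriting $R(\dbZ^r\times\dbZ/2)\cong\bigl(R(\dbZ/2)\bigr)(\dbZ^r)$ and quoting \cite{LR14} over the coefficient ring $R(\dbZ/2)$, is a legitimate substitute for the paper's Atiyah--Hirzebruch computation and yields the same groups $\dosNildos{r-1}$. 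The Mayer--Vietoris derivation of the long exact sequence from the $(M)$--$(NM)$ pushout for $\underline{E}G$ is also the paper's argument.

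The genuine gap is the first summand, i.e.\ the isomorphism $H_q^\Gamma(E\Gamma\to\underline{E}\Gamma;\KR)\cong H_q^G(\underline{E}G;\KR)$, and the one concrete step you assert there is incorrect. Applying the induction structure to $p\colon\Gamma\to G$ and the free $Z$-CW-complex $E\Gamma$ gives $H_q^\Gamma(E\Gamma;\KR)\cong H_q^G(Z\backslash E\Gamma;\KR)$; but $Z\backslash E\Gamma$ is a \emph{free} $G$-CW-complex which is homotopy equivalent to $BZ$ and has orbit space $B\Gamma$, so it is not a model for $EG$. What induction actually computes is the $\dbK_R$-homology of $B\Gamma$, not of $BG$, so the claimed isomorphism $H_q^\Gamma(E\Gamma;\KR)\cong H_q^G(EG;\KR)$ does not follow (for a general extension by a central $\dbZ/2$ these two groups differ). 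Your subsequent plan --- factor $E\Gamma\to p^*EG\to\underline{E}\Gamma$ and control the term $H_q^\Gamma(E\Gamma\to p^*EG;\KR)$ coming from the center --- is indeed the right skeleton, but you explicitly leave that control unestablished (``where the real work lies''), so the proposal never proves the identification it needs. This is precisely the point at which the paper invokes the proof of \cite[Theorem 1.2]{BSS}, which establishes $H_q^\Gamma(E\Gamma\to\underline{E}\Gamma;\KR)\cong H_q^G(\underline{E}G;\KR)$ by an argument that does not depend on the coefficient ring; without either citing that result or supplying the missing bookkeeping yourself, the displayed formula for $Wh_q(\Gamma;R)$ is not proved.
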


\begin{proof}[Proof of Theorem \ref{whhilbert}]
We know from the main result of \cite{KLR14} that $\Gamma$ satisfies the Farrell-Jones conjecture. Now from Lemma \ref{splitting} we have that 
\[Wh_q(\Gamma;R)\cong H_q^\Gamma(E\Gamma \to \underline{E}\Gamma;\KR)\oplus H_q^\Gamma(\underline{E}\Gamma \to \underline{\underline{E}}\Gamma;\KR).\]\\

Following the proof of \cite[Thoerem 1.2]{BSS} we have, for all $q\in \dbZ$ that $H_q^{\Gamma}(E\Gamma \to \underline{E}\Gamma;\KR) \cong H_q^G(\underline{E}G;\KR)$. Now, we consider the following $G$-pushout 
$$\LW{\coprod_{M\in \calF} G\times_{M}EM}{EG}{\coprod_{M\in \calF}G/M}{\underline{E}G}$$
where $\calF$ is a complete set of representatives of conjugacy classes of finite maximal subgroups of $G$, which is a consequence of $G$ satisfying the hypothesis of \cite[Corollary 2.11]{LW12}. We get the long exact sequence in the statement from the Mayer-Vietoris sequence of the $G$-pushout above.


Note that, this long exact sequence could also be obtained from the p-chain spectral sequence that converges to $H_n^{Or(G)}(*_{M\fin}, *_{Tr};\KR)$ (see \cite{BSS} and \cite[Proposition 12]{BJPP01}).\\

The next step is to compute $H_q^\Gamma(\underline{E}\Gamma \to \underline{\underline{E}}G;\KR)$. From Theorem \ref{classifyinghilbert2} we get the isomorphism $$H_q^\Gamma(\underline{E}\Gamma\to \underline{\underline{E}}\Gamma;\KR)\cong \bigoplus\limits_{C\in \calI}H_q^{N_\Gamma(C)}(\underline{E}N_\Gamma(C)\to \underline{E}W_\Gamma(C);\KR).$$
In order to compute $H_q^{N_\Gamma(C)}(\underline{E}N_\Gamma(C)\to \underline{E}W_\Gamma(C);\KR)$we are going to follow the argument in the proof of Theorem 1.11 of \cite{LR14}.
For any maximal infinite cyclic subgroup $C$ of $\Gamma$ we have
\[
N_\Gamma(C)\cong \begin{cases}
\dbZ^r\times \dbZ/2 & \text{ if } N_G(p(C))\cong \dbZ^r, \text{ and }\\
\dbZ^r\rtimes \dbZ/4 & \text{ if } N_G(p(C))\cong \dbZ^r \rtimes \dbZ/2.
\end{cases}
\]
Then we proceed by cases.\\

\textbf{Case 1:} $N_\Gamma(C)\cong \dbZ^r \times \dbZ/2$. In this case we have

\begin{align*}
H_q^{N_\Gamma(C)}(\underline{E}N_\Gamma(C) \to \underline{E}W_\Gamma (C);\KR) &\cong H_q^{\dbZ^r \times \dbZ/2}(\underline{E}(\dbZ^r\times \dbZ/2)\to \underline{E}(\dbZ^{r-1}\times \dbZ/2);\KR)\\
 &\cong H_q^{\dbZ \times \dbZ/2} (\dbT^{r-1} \times (\underline{E}(\dbZ \times \dbZ/2) \to \underline{E}\dbZ/2;\KR)\\
  &\cong \dosNildos{r-1},
\end{align*}
where $\dbT^{r-1}$ is the real $(r-1)$-dimensional torus, the second isomorphism comes from the inductive structure of the equivariant homology theory and the maximality of $C$, and the third isomorphism comes from the Atiyah-Hirzebruch spectral sequence associated to the homology theory given by sendig a $CW$- complex $Z$ to the abelian groups $H_q^{\dbZ\times \dbZ/2}(Z\times (\underline{E}(\dbZ \times \dbZ/2) \to \underline{E}\dbZ/2;\KR)$.\\

\textbf{Case 2: } $N_\Gamma(C)  \cong \dbZ^r \rtimes \dbZ/4$. Since the projection $\underline{E}N_\Gamma (C) \times q_h^*\underline{E}W_\Gamma(C)$ is a $N_\Gamma(C)$-homotopy equivalence, we have from Lemma \ref{lemmaLR1} the following
\begin{align*}
H_q^{N_\Gamma(C)}(\underline{E}N_\Gamma(C)& \to \underline q_H^*{E}W_\Gamma (C);\KR)\\ &\cong H_q^{N_\Gamma(C)}(\underline{E}N_\Gamma(C)\times q_H^*\underline{E}W_\Gamma(C) \to q_H^*\underline{E}W_\Gamma (C);\KR)\\
 &\cong H_q^{N_\Gamma(C)} ( \underline{E} N_\Gamma(C) \times q_H^*(E_ZW_\Gamma (C) \times E_Z(\dbZ/4) \to q_H^*(E_ZW_\Gamma (C) \times E_Z(\dbZ/4)) ;\KR),
\end{align*}
where the third isomorphisms comes from Lemma \ref{lemmaLR1}.\\
Next, we have the $\dbZ/4$-homology theory given by assigning to a $\dbZ/4$-CW-complex $Y$ the abelian groups 
\[
H_q^{N_\Gamma(C)}\left( \underline{E}N_\Gamma(C) \times q_H^*(E_ZW_\Gamma(C)\times Y) \to q_H^*(E_ZW_\Gamma(C)\times Y);\KR\right),
\]
so that we have an Atiyah-Hirzebruch spectral sequence converging to 
\[
H_q^{N_\Gamma(C)} ( \underline{E} N_\Gamma(C) \times q_H^*(E_ZW_\Gamma (C) \times E_Z(\dbZ/4) \to q_H^*(E_ZW_\Gamma (C) \times E_Z(\dbZ/4) ;\KR),
\]
and such that 
\[
E_{i,j}^2= H_i^{\dbZ/4}( E_Z\dbZ/4; H_j^{N_\Gamma(C)} (\underline{E}N_\Gamma(C) \times q_H^*( E_Z W_\Gamma(C) \times (\dbZ/4)/Z) \to q_H^*( E_Z W_\Gamma(C) \times (\dbZ/4)/Z);\KR).
\]
From Lemma \ref{lemmaLR2} we have
\begin{align*}
E_{i,j}^2 &\cong H_j^{\dbZ/4} (E_Z\dbZ/4;\dosNildos{r-1})\\
 &\cong H_j^{\dbZ/4}(E_Z\dbZ/4; \dbZ(\dbZ/4) \otimes _{\dbZ} (\dbZ \otimes_{\dbZ(\dbZ/4)} \dosNildos{r-1})\\
 &\cong H_j(E_Z\dbZ/4 ;\dbZ\otimes_{\dbZ(\dbZ/4)} \dosNildos{r-1})\\
 &\cong H_j(E_Z\dbZ/4 ;\Nildos{r-1})\\
 &\cong \begin{cases}
 \Nildos{r-1} &  \text{ if } i=0,\\
 0 & \text{ if } i\neq0.
 \end{cases}
\end{align*}
Therefore the proof is complete once we prove Lemma \ref{lemmaLR1} and Lemma \ref{lemmaLR2}.
\end{proof}

\begin{remark}
Note that
\[
H_i^M(EM;\mathbb{K}_\dbZ)\cong \begin{cases}
\philbert^{ab}\oplus \dbZ/2 & \text{ if } i=1,\\
\dbZ & \text{ if } i=0\text{, and}\\
0 & \text{ if } i\leq -1.
\end{cases}
\]
Therefore $H_i^M(EM;\mathbb{K}_\dbZ)\to K_i(\dbZ(M))$ it is injective for $i\leq1$ and we recover Corrollary 1.4 of \cite{BSS}.
On the other hand, in general, the so called \textit{classical assembly map} $H_i^M(EM;\KR)\to K_i(R(M))$ it is not necessarily injective, not even when $R$ is regular. For example, in \cite{UW16} Ullman and Wu show that if $R$ is a finite field of characteristic $p>2$ and $M=\dbZ/2 \times \dbZ/2$, then the classical assembly map is not injective.
\end{remark}


\section{Computations of topological K-theory of $C^*$-algebras}

In order to obtain calculations of topological K-theory of the reduced $C^*$-algebra of the Hilbert modular group, we will use a similar strategy as in \cite{BSS}. First note that, since the Hilbert modular group is a countable subgroup of $SL_2(\mathbb{R})$ ,then it satisfies the Baum-Connes conjecture as well as the reduced modular Hilbert group, see for example \cite{GHW}.

We know that $\mathbb{H}^n$ is a model for $\underbar{E}PSL_2(\mathcal{O}_k)$, and moreover from Remark 4.2 in \cite{BSS} the group $PSL_2(\mathcal{O}_k)$ satisfies (M), (NM), then by Theorem 4.1 in \cite{DL03} there is a short exact sequence

$$0\to \bigoplus_{H\in\mathcal{F}}\widetilde{K}_q^{\operatorname{top}}(C_r^*(H))\to K_q^{\operatorname{top}}(C_r^*(PSL_2(\mathcal{O}_k)))\to H_q(PSL_2(\mathcal{O}_k)\setminus\mathbb{H}^k;\mathbb{K}^{\operatorname{top}})\to0,$$
where $\mathcal{F}$ denotes a set of representatives of conjugacy classes of maximal finite subgroups of $\philbert$. 

As every element in $\mathcal{F}$ is abelian (Lemma 4.1 in \cite{BSS})  we know that $$\widetilde{K}_q^{\operatorname{top}}(C_r^*(H))=\begin{cases}\mathbb{Z}^{\sharp(H)-1}&\text{if $q$ is even}\\0&\text{if $q$ is odd},\end{cases}$$  then we have an isomorphism $$K_q^{\operatorname{top}}(C_r^*(PSL_2(\mathcal{O}_k)))\otimes\mathbb{Q}\cong\begin{cases} H_q(PSL_2(\mathcal{O}_k)\setminus\mathbb{H}^k;\dbK^{\operatorname{top}})\otimes\mathbb{Q}\oplus\mathbb{Q}^{\sharp(H)-1}&\text{if $q$ is even}\\
H_q(PSL_2(\mathcal{O}_k)\setminus\mathbb{H}^k;\dbK^{\operatorname{top}})\otimes\mathbb{Q}&\text{if $q$ is odd.}\end{cases}$$

Now, it  only remains to compute the ranks of the K-homology of the quotient $SL_2(\mathcal{O}_k)\setminus\mathbb{H}^k$. For this we use the Chern character and the knowledge of their homology groups.

\begin{theorem}\cite[Theorem 6.3]{Fr90}
 The ranks of the homology groups of $X=SL_2(\mathcal{O}_k)\setminus \mathbb{H}^k$ are given by following formulae
 \begin{enumerate}
     \item $$rk(H_0(X))=1$$
     \item $$rk(H_q(X))=0\text{, for }q\geq2k$$
     \item Assume $0<q<2k$. Then we have $$rk(H_q(X))=b^q_{univ}+b^q_{Eis}+b^m_{cusp},$$
     where
     
     \begin{enumerate}
         \item $b^q_{univ}=\begin{cases}\binom{k}{q/2}&\text{ if $q$ is even}\\
         0& \text{ if $q$ is odd,}\end{cases}$
         \item $b^q_{Eis}=\begin{cases}0& \text{if }0<q<k\\
         h\cdot \binom{k-1}{q-k}&\text{if }k\leq q<2k-1\\
         h-1&\text{if } q=2n-1,\end{cases}$
         \item $b^q_{cusp}=\begin{cases}0&\text{if }q\neq k\\\sum_{p+r=q}h^{p,r}_{cusp}&\text{if }q=k,\end{cases}$
         
     \end{enumerate}
     where $h$ is the class field number of the extension $k:\mathbb{Q}$ and $$h^{p,q}_{cusp}=\sum_{\begin{subarray}{1}b\subset\{1,\ldots,k\}\\\sharp b=p\end{subarray}}\dim[\Gamma^b,(2,\ldots,2)]_0.$$
 \end{enumerate}\end{theorem}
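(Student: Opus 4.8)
The plan is to compute the complex cohomology $H^\ast(X;\dbC)$ of the locally symmetric space $X=\hilbert\backslash(\dbH)^{n}$, where $n=[k:\dbQ]$ is the degree (the integer written as the exponent of $\dbH$ in the statement), and then pass to homology: since each $H_q(X)$ is finitely generated, its rank equals $\dim_\dbC H^q(X;\dbC)$, and as $-I$ acts trivially we may equally work with $\philbert$. The central tool is the Borel--Serre compactification $\overline{X}$, a compact manifold with corners whose interior is homotopy equivalent to $X$ and whose boundary $\partial\overline{X}$ is a disjoint union indexed by the cusps of $\hilbert$. The cusps are in bijection with the ideal class group of $k$, so there are exactly $h$ of them. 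The long exact sequence of the pair $(\overline{X},\partial\overline{X})$, together with Poincar\'e--Lefschetz duality, splits $H^\ast(X;\dbC)$ into an \emph{interior} part (the image of cohomology with compact support) and an \emph{Eisenstein} part (detected by restriction to the boundary). The three summands $b^q_{univ}$, $b^q_{cusp}$, $b^q_{Eis}$ are then identified with pieces of this decomposition.

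First I would treat the interior cohomology. By reduction theory and the decomposition of the interior cohomology into automorphic contributions (Matsushima's formula in the form given by Borel, refined by Franke), this part is a sum over cohomological automorphic representations of $SL_2$ over $k$. The trivial representation contributes the \emph{universal} classes, which are precisely the $SL_2(\dbR)^n$-invariant forms on $(\dbH)^n$: these form the exterior algebra generated by the $n$ K\"ahler forms $\omega_1,\dots,\omega_n$ of the factors, hence contribute $\binom{n}{q/2}$ in each even degree and nothing in odd degree, giving $b^q_{univ}$. The cuspidal representations contribute $b^q_{cusp}$. For $SL_2$ over a totally real field the cohomological cuspidal representations are built from holomorphic and antiholomorphic discrete series at the archimedean places; computing their $(\mathfrak g,K)$-cohomology shows these classes are concentrated in the middle degree $q=n$ and split by Hodge type $(p,r)$ with $p+r=n$, where $h^{p,r}_{cusp}$ counts the weight $(2,\dots,2)$ Hilbert cusp forms holomorphic in $p$ of the variables and antiholomorphic in the remaining $r$. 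This yields exactly $b^q_{cusp}$.

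Next I would compute the Eisenstein part from the boundary. A neighbourhood of each cusp retracts onto a nilmanifold: concretely an $n$-torus bundle (from the unipotent radical $\mathrm{Res}_{k/\dbQ}\mathbb{G}_a$) over an $(n-1)$-torus (from the unit group, of rank $n-1$), a closed $(2n-1)$-manifold whose cohomology is computed by a Hochschild--Serre argument. Summing over the $h$ cusps gives $H^\ast(\partial\overline{X};\dbC)$. The genuinely delicate point is to identify the image of the restriction $H^\ast(X;\dbC)\to H^\ast(\partial\overline{X};\dbC)$, that is, to decide which boundary classes extend to $X$. Following Harder's theory of Eisenstein cohomology for $SL_2$ over number fields, the extending classes are realized by holomorphic or residual Eisenstein series, and the precise count is dictated by the analytic behaviour of their constant terms, in particular by the pole of the associated Dedekind zeta factor. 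This produces the vanishing $b^q_{Eis}=0$ for $0<q<n$, the value $h\binom{n-1}{q-n}$ for $n\le q<2n-1$, and $h-1$ in the top degree $q=2n-1$, where the single relation reflects the one residual boundary class that fails to lift (equivalently, the relation among the $h$ fundamental classes of the boundary components).

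The main obstacle is exactly this last step: cutting the large boundary cohomology down to the part that lifts to $X$. This requires the full analytic input of the theory of Eisenstein series (Langlands' constant-term computation and the location and residues of poles), and it is here, rather than in the formal homological algebra, that the arithmetic of $k$ enters the Betti numbers through the class number $h$ and the Dedekind zeta function. Once the interior and Eisenstein pieces are assembled via the long exact sequence of the pair, summing $b^q_{univ}+b^q_{Eis}+b^q_{cusp}$ gives the stated ranks; the boundary cases $rk(H_0(X))=1$ and $rk(H_q(X))=0$ for $q\ge 2n$ follow from connectedness and from the virtual cohomological dimension $2n-1$ of $\hilbert$ (the $\dbQ$-rank of $\mathrm{Res}_{k/\dbQ}SL_2$ being $1$), which also pins the top nonzero degree at $q=2n-1$.
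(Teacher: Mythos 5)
The paper offers no proof of this statement---it is imported verbatim (typos included) from \cite[Theorem 6.3]{Fr90}---and your outline follows essentially the same route as that cited source: invariant forms on $(\mathbb{H})^n$ giving $b^q_{univ}$, middle-degree classes attached to weight $(2,\dots,2)$ Hilbert cusp forms split by Hodge type giving $b^q_{cusp}$, and Harder's Eisenstein cohomology at the Borel--Serre boundary giving $b^q_{Eis}$, with the count of cusps by the class number $h$. The only slip is terminological: the cusp cross-sections are solvmanifolds (torus bundles over tori whose monodromy is the hyperbolic action of the unit group), not nilmanifolds, though the Hochschild--Serre computation of their cohomology that you invoke goes through unchanged.
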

 As $SL_2(\mathcal{O}_k)$ is an extension of $PSL_2(\mathcal{O}_k)$ by $\mathbb{Z}/2\mathbb{Z}$, we have $$H^*(SL_2(\mathcal{O}_k)\setminus\mathbb{H}^k;\mathbb{Q})\cong H^*(PSL_2(\mathcal{O}_k)\setminus\mathbb{H}^k;\mathbb{Q}).$$Moreover, as we are taking rational coefficients we have an isomorphism$$H^*(PSL_2(\mathcal{O}_k)\setminus\mathbb{H}^k;\mathbb{Q})\cong H_*(PSL_2(\mathcal{O}_k)\setminus\mathbb{H}^k;\mathbb{Q})$$

Then by the Chern character $$H_q(PSL_2(\mathcal{O}_k)\setminus\mathbb{H}^k;\dbK^{\operatorname{top}})\otimes\mathbb{Q}\cong \bigoplus_{n\in\mathbb{Z}}H_{q+2n}(PSL_2(\mathcal{O}_k)\setminus\mathbb{H}^k;\mathbb{Q}).$$

The last isomorphism completes the calculation of the topological K-theory groups of $C_r^*(PSL_2(\mathcal{O}_k))$ with rational coefficients.
Thus we have proved the following result.
\begin{theorem}\label{topk}
 The ranks of the topological K-theory of the reduced C*-algebra $C_r^*(PSL_2(\mathcal{O}_k))$ are given by the following formulae
 $$rk(K_q^{\operatorname{top}}(C_r^*(PSL_2(\mathcal{O}_k)))=
 \begin{cases}
 \sum_{n\in\mathbb{Z}}b^{q+2n}_{univ}+b^{q+2n}_{Eis}+b^{q+2n}_{cusp}+\sum_{H\in\mathcal{F}} (|H|-1)+1&\text{if $q$ is even}\\
 \sum_{n\in\mathbb{Z}}b^{q+2n}_{univ}+b^{q+2n}_{Eis}+b^{q+2n}_{cusp}&\text{if $q$ is odd.}
 \end{cases}$$
 Where $\Gamma$ denotes the set of conjugacy classes $(H)$ of non-trivial subgroups belonging to the family
$\mathcal{MFIN}$ whose elementes are maximal finite subgroups of $G$ together with the trivial group.
\end{theorem}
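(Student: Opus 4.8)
The plan is to assemble the rank formula from three ingredients, following the same strategy as in \cite{BSS}. Since $\philbert$ is a countable subgroup of $PSL_2(\dbR)^n$ it satisfies the Baum--Connes conjecture (see \cite{GHW}), so the assembly map \eqref{BC} identifies $K_q^{\operatorname{top}}(C_r^*(\philbert))$ with the equivariant $K$-homology $H_q^{\philbert}(\underline{E}\philbert;\dbK^{\operatorname{top}})$. Because $\dbH^n$ is a model for $\underline{E}\philbert$ and, by Remark~4.2 of \cite{BSS}, $\philbert$ satisfies conditions (M) and (NM), Theorem~4.1 of \cite{DL03} furnishes the natural short exact sequence
\[
0\to \bigoplus_{H\in\calF}\widetilde{K}_q^{\operatorname{top}}(C_r^*(H))\to K_q^{\operatorname{top}}(C_r^*(\philbert))\to H_q(\philbert\setminus\dbH^n;\dbK^{\operatorname{top}})\to 0,
\]
where $\calF$ is a set of representatives of the conjugacy classes of maximal finite subgroups.

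The second step disposes of the kernel. Every $H\in\calF$ is finite abelian (Lemma~4.1 of \cite{BSS}), so $C_r^*(H)\cong\dbC^{\abs{H}}$ and hence $\widetilde{K}_q^{\operatorname{top}}(C_r^*(H))\cong\dbZ^{\abs{H}-1}$ for even $q$ and vanishes for odd $q$. After tensoring with $\dbQ$ the sequence becomes a short exact sequence of $\dbQ$-vector spaces and hence splits, so ranks add; since the kernel is concentrated in even degrees, it contributes the summand $\sum_{H\in\calF}(\abs{H}-1)$ to the rank precisely when $q$ is even.

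The third step computes the rank of the quotient term. By the Chern character one has $H_q(\philbert\setminus\dbH^n;\dbK^{\operatorname{top}})\otimes\dbQ\cong\bigoplus_{m\in\dbZ}H_{q+2m}(\philbert\setminus\dbH^n;\dbQ)$, so it suffices to know the rational Betti numbers of the quotient. As $\hilbert$ is an extension of $\philbert$ by $\dbZ/2$, their rational (co)homologies coincide, and these ranks are exactly Freitag's Theorem~6.3 of \cite{Fr90}: the class $H_0$ has rank $1$ and sits in even total degree, producing the isolated $+1$ appearing for even $q$, while the positive-degree groups assemble the sum $\sum_{m}(b^{q+2m}_{univ}+b^{q+2m}_{Eis}+b^{q+2m}_{cusp})$, which vanishes termwise whenever the parity of $q+2m$ kills the corresponding Betti number.

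Combining the three steps yields the even and odd cases of the stated formula. The internal argument is essentially bookkeeping once the inputs are granted; the genuinely delicate verifications---that $\philbert$ satisfies (M) and (NM), and Freitag's explicit rank computation---are imported from \cite{BSS} and \cite{Fr90}. Thus the only point within the present argument that requires care is the alignment of reduced versus unreduced $K$-theory together with the parity placement of the $H_0$ contribution, which is exactly what makes both the finite-subgroup summand and the $+1$ appear only in even degrees.
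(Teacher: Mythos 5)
Your proposal is correct and follows essentially the same route as the paper's own proof: the Baum--Connes isomorphism via \cite{GHW}, the Davis--L\"uck short exact sequence coming from conditions (M) and (NM), the identification $\widetilde{K}_q^{\operatorname{top}}(C_r^*(H))\cong\dbZ^{\abs{H}-1}$ (even $q$) for the finite abelian maximal subgroups, and the Chern character combined with Freitag's Betti numbers for the quotient $\philbert\setminus\dbH^n$. The parity bookkeeping you highlight---the finite-subgroup summand and the $+1$ from $H_0$ appearing only in even degrees---matches the paper's computation exactly.
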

\begin{remark}
A computation of the torsion part of the topological K-theory groups would depend on a complete description of the group cohomology of $\philbert$ with integral coefficients, but this problem goes beyond the scope of this paper.
\end{remark}

\bibliographystyle{alpha} 
\bibliography{myblib}
\end{document}